\newtheorem{lemma}{Lemma}
\newtheorem{prop}{Proposition}
\newtheorem{thm}{Theorem}
\newtheorem{cor}{Corollary}
\theoremstyle{definition}
\newtheorem{defn}{Definition}
\theoremstyle{remark}
\newtheorem{rem}{Remark}
\newtheorem{conj}{Conjecture}
\newtheorem{problem}{Problem}
\newcounter{numl}
\newcommand{\labelnuml}{\textup{(\roman{numl})}}
\newenvironment{numlist}{\begin{list}{\labelnuml}%
{\usecounter{numl}\setlength{\leftmargin}{0pt}%
\setlength{\itemindent}{2\parindent}%
\setlength{\itemsep}{\smallskipamount}\def
\makelabel ##1{\hss \llap {\upshape ##1}}}}{\end{list}}
\newenvironment{bulletlist}{\begin{list}{\labelitemi}%
{\setlength{\leftmargin}{\parindent}\def
\makelabel ##1{\hss \llap {\upshape ##1}}}}{\end{list}}
\DeclareSymbolFont{script}{U}{eus}{m}{n}
\DeclareSymbolFontAlphabet{\mathscr}{script}
\DeclareMathSymbol{\Wedge}{0}{script}{"5E}
\DeclareMathAlphabet{\mathrmsl}{OT1}{cmr}{m}{sl}
\newcommand{\R}{{\mathbb R}}
\newcommand{\C}{{\mathbb C}}
\newcommand{\Z}{{\mathbb Z}}
\newcommand{\N}{{\mathbb N}}
\newcommand{\T}{{\mathbb T}}
\newcommand{\sub}{\subseteq}
\newcommand{\trace}{\mathop{\mathrm{tr}}\nolimits}
\newcommand{\tor}{{\mathfrak t}}
\newcommand{\Fa}{F}
\renewcommand{\d}{{\mathrmsl d}}
\newcommand{\Hess}{\mathop{\mathrm{Hess}}}
\newcommand{\ip}[1]{\langle #1 \rangle}
\newcommand{\J}{\boldsymbol J}
\newcommand{\Om}{\boldsymbol\Omega}
\newcommand{\ang}{\boldsymbol t}
\newcommand{\Proj}{\mathrm P}
\newcommand{\al}{\alpha}
\newcommand{\be}{\beta}
\newcommand{\1}{0}
\newcommand{\2}{\infty}
\begin{document}

\author[V. Apostolov]{Vestislav Apostolov}
\address{Vestislav Apostolov \\ D{\'e}partement de Math{\'e}matiques\\
UQAM\\ C.P. 8888 \\ Succursale Centre-ville \\ Montr{\'e}al (Qu{\'e}bec) \\
H3C 3P8 \\ Canada}
\email{apostolov.vestislav@uqam.ca}
\author[G. Maschler]{Gideon Maschler}
\address{Gideon Maschler \\ Department of Mathematics and Computer Science\\
Clark University\\
Worcester\\ Massachusetts\\ 01610\\ U.S.A.}
\email{gmaschler@clarku.edu}
\title{Conformally K\"ahler,  Einstein--Maxwell Geometry}

\date{\today}
\begin{abstract} On a given compact complex manifold or orbifold $(M,J)$, we study the existence of Hermitian metrics $\tilde g$ in the conformal classes of  K\"ahler metrics on $(M,J)$,  such that  the Ricci tensor of $\tilde g$ is  of type $(1,1)$ with respect to the complex structure,  and the scalar curvature of $\tilde g$ is constant.   In real  dimension $4$,  such Hermitian metrics provide a Riemannian counter-part of the Einstein--Maxwell equations in general relativity, and have been recently studied in \cite{ambitoric1, LeB0, LeB,KTF}. We show how the existence problem of such Hermitian metrics (which we call in any dimension  {\it conformally K\"ahler, Einstein--Maxwell} metrics) fits into a formal momentum map interpretation, analogous to results by  Donaldson and Fujiki~\cite{donaldson, fujiki} in the constant scalar curvature K\"ahler case. This leads to a suitable notion of a Futaki invariant which provides an obstruction to the existence of conformally K\"ahler,  Einstein--Maxwell  metrics invariant
under a certain group of automorphisms which are associated to a given K\"ahler class,  a real holomorphic vector field on $(M,J)$, and a positive normalization constant. Specializing to the toric case, we further define a suitable notion of $K$-polystability and show it provides a (stronger) necessary condition for the existence of toric, conformally K\"ahler,  Einstein--Maxwell  metrics.  We use the methods of \cite{ambitoric2} to show that on a compact symplectic toric $4$-orbifold with second Betti number equal to $2$, $K$-polystability is also a sufficient condition for the existence of (toric) conformally K\"ahler,  Einstein--Maxwell metrics,  and the latter are explicitly described as ambitoric in the sense of \cite{ambitoric1}. As an application,  we exhibit many new examples of conformally K\"ahler,  Einstein--Maxwell metrics defined on compact $4$-orbifolds,  and obtain a uniqueness result for the construction in \cite{LeB0}. 

\end{abstract}

\maketitle

\section{Introduction}
\subsection{Conformally K\"ahler, Einstein-Maxwell metrics}\label{s:preliminaries} In this paper, we study a special class of  (non-K\"ahler in general) Hermitian metrics  $\tilde g$, defined on a compact complex K\"ahler  manifold (or orbifold)  $(M,J)$ of real dimension $2m \ge 4$ as follows.
\begin{defn} Let $\tilde g$ be a Hermitian metric on $(M,J)$ for which there exists a positive smooth function $f$ such that $g= f^2 \tilde g$ is a K\"ahler metric, and which satisfies the following curvature conditions:
\begin{equation}\label{ric}
{\rm Ric}^{\tilde g}(J \cdot, J \cdot ) = {\rm Ric}^{\tilde g}(\cdot, \cdot),
\end{equation}
\begin{equation}\label{scal}
s_{\tilde g} = {\rm const},
\end{equation}
where ${\rm Ric}^{\tilde g}$ and $s_{\tilde g}$ denote the Ricci tensor and the scalar curvature of $\tilde g$.
 We shall refer to such a conformally K\"ahler,  Hermitian metric  as {\it conformally K\"ahler, Einstein--Maxwell}  metric on $(M,J)$.
\end{defn}
Note that, the Ricci tensor ${\rm Ric}^g$ of the K\"ahler metric $g= f^2 \tilde g$ also satisfies \eqref{ric}, and the Ricci tensors of $\tilde{g}$ and $g$ are related by (see e.g. \cite[1.161]{besse})
\begin{equation*}
{\rm Ric}^{\tilde g} = {\rm Ric}^g + \frac{2m-2}{f} D^g df  + h g,
\end{equation*}
where $D^g$ denotes the Levi--Civita connection of $g$ and $h$ is a smooth function not given here explicitly.  It is thus easily seen that condition \eqref{ric} is equivalent to the fact that the vector field $K= J {\rm grad}_g f$ is Killing for both $g$ and $\tilde g$, whereas condition \eqref{scal} reads as
\begin{equation}\label{yamabe}
s_{\tilde g}=2\Big(\frac{2m-1}{m-1}\Big) f^{m+1} \Delta_{g}\Big(\frac{1}{f}\Big)^{m-1} + s_{g} f^2 = {\rm const},
\end{equation}
where $\Delta_g$ is the Riemannian Laplacian of $g$ and $s_g$ is the scalar curvature of $g$. In other words, conformally K\"ahler,  Hermitian metrics $\tilde g$ on $(M,J)$  satisfying \eqref{ric}-\eqref{scal} are in one-to-one correspondence with K\"ahler metrics $g$ on $(M, J)$  which admit a  Killing vector field $K$ with a positive Killing potential $f$,  satisfying \eqref{yamabe}.

\smallskip
In order to explain the relation with the Einstein--Maxwell equations, recall that any Hermitian metric $(\tilde g, J)$  such that $f^2\tilde g$ is K\"ahler satisfies the relation
\begin{equation}\label{basic}
\tilde{g}(D^{\tilde g}_X J \cdot, \cdot)) = \frac{1}{2}\Big(X^\flat \wedge J\theta + JX^{\flat} \wedge \theta\Big),
\end{equation}
where  $\theta = -2 d \log f$,  $D^{\tilde g}$ stands for the Levi--Civita connection of $\tilde g$ and $\flat$ (resp. $\sharp$) denotes the isomorphism $\tilde g: T_pM \to T^*_pM$
(resp. $\tilde g^{-1} : T^*_p M \to T_pM$). This is essentially \cite[Ch.~IX, Prop.~4.2]{KN}, taking in mind that  in our case $dF= \theta \wedge F$,  where $F(X,Y):=\tilde g(JX, Y)$ denotes the fundamental $2$-form of $(\tilde g, J)$).  Letting ${\rm Ric}_0^{\tilde g}$ denote the trace-free part of ${\rm Ric}^{\tilde g}$ and $\rho_0^{\tilde g}(\cdot, \cdot)= {\rm Ric}^{\tilde{g}}_0(J \cdot, \cdot)$ the corresponding primitive $(1,1)$-form, it follows that
\begin{equation}\label{second}
\begin{split}
(\delta^{\tilde g} \rho_0^{\tilde g})(X) =&\sum_{i=1}^{2m}\Big({\rm Ric}^{\tilde g}_0(e_i, (D^{\tilde g}_{e_i} J)(X))  + \big((D^{\tilde g}_{e_i} {\rm Ric}_0^{\tilde g}) (e_i, JX)\big)\Big)\\
                            = &-\frac{1}{2} \sum_{i=1}^{2m} \Big(({e_i}^\flat \wedge J\theta+ J{e_i}^{\flat} \wedge \theta \big)\big({\rm Ric}_0^{\tilde g}(e_i)^{\sharp}, X) \Big)\\
                            &- (\delta^{\tilde g} {\rm Ric}^{\tilde g}_0) (JX)\\
                            = & \rho^{\tilde g}_0 (\theta^{\sharp}, X) + \frac{(m-1)}{2m} (Jds_{\tilde g})(X),
 \end{split}
 \end{equation}
where $\{e_1, \ldots, e_{2m}\}$ is any $\tilde g$-orthonormal frame, $\delta^{\tilde g}$ is the formal adjoint of $D^{\tilde g}$, and we have used \eqref{basic} and the Ricci identity $\delta^{\tilde g}  {\rm Ric}^{\tilde g}_0 =   \frac{m-1}{2m} ds_{\tilde g}^{\sharp}$.        It follows that the $(1,1)$-form $\frac{1}{f^2} \rho_0^{\tilde g}$ is co-closed iff $s_{\tilde g}$ is constant. In other words, for any conformally-K\"ahler, Einstein--Maxwell metric $\tilde g$, the trace-free Ricci endomorphism $r^{\tilde g}_0$ can be written as
\begin{equation}\label{EM}
r^{\tilde g}_0 = -(\omega^{\sharp})\circ (\frac{1}{f^2} \rho_0^{\tilde g})^{\sharp},
\end{equation}
where $\omega(\cdot, \cdot)= g(J\cdot, \cdot)$ is the {\it closed} K\"ahler form of the K\"ahler metric $g$, while $(\frac{1}{f^2} \rho_0^{\tilde g})$ is a {\it co-closed}  $(1,1)$-form which is orthogonal to $\omega$, and the superscript $\sharp$ denotes the skew-symmetric endomorphisms corresponding to these forms via the metric $\tilde g$.

When $m=2$,  both $\omega$ and    $(\frac{1}{f^2} \rho^{\tilde g}_0)$ are {\it harmonic} with respect to $\tilde g$, as they are, respectively, closed self-dual and co-closed anti-self-dual  $2$-forms. Thus, in this case,  $\tilde g$ is a metric of Riemannian signature satisfying the Einstein--Maxwell equations in General Relativity. We also notice that in this case,  the Riemannian Goldberg--Sachs theorem (see e.g. \cite{AG}) implies that a Hermitian metric $\tilde g$ which satisfies
\eqref{ric} and \eqref{scal} is automatically  conformally K\"ahler as soon as $(M,J)$ admits a K\"ahler metric (i.e. the first Betti number of $M$ is even).

\subsection{Motivation and examples}

There are two special cases of conformally K\"ahler, Einstein--Maxwell metrics,  which are well-studied.

\smallskip
The first one is the case when the conformal factor $f$ is constant,  i.e. $\tilde g$ is a constant scalar curvature K\"ahler metric (cscK for short). The theory of cscK metrics represents  a most active area of current research. While this case will be considered trivial from the point of view of the theory developed in this paper (as in this case  it will add no new results), the theory of cscK metrics will play a pivotal role in motivating the results in the case when $f$ is not constant.

\smallskip
The second case of special interest is when $\tilde g$ is {\it Einstein}, i.e. ${\rm Ric}^{\tilde g}= \frac{s_{\tilde g}}{2m} \tilde{g}$. When $m=2$,  smooth compact complex surfaces admitting conformally K\"ahler, Einsten metrics have been classified by Chen--LeBrun--Weber~\cite{CLW}. Apart from the K\"ahler--Einstein case (i.e. when $f=const.$) which is classified in \cite{tian0}, there are only two such Hermitian surfaces, namely the first Hirzebruch surface $\mathbb{F}_1$ with the Page metric~\cite{page} on it, and the blow-up of $\mathbb{F}_1$ at one point, for which existence of the metric was shown in \cite{CLW}. In both cases, the resulting Einstein metrics are toric, i.e. contain a $2$-dimensional torus in their isometry groups. Furthermore, it was shown in \cite{ambitoric2} that there exist continuously countable families of toric compact Einstein, conformally K\"ahler $4$-orbifolds.

When $m>2$, the situation is more rigid, as shown by Derdzinski--Maschler \cite{DM0,DM}: the only compact smooth complex manifolds  admitting conformally K\"ahler, non-K\"ahler,  Einsten metrics are certain $\C P^1$-bundles over a Fano, K\"ahler--Einstein manifold with a metric found by L.~B\'erard-Bergery~\cite{B-B}. However, it appears likely that the methods of \cite{DM} allow to construct more orbifold examples.

\smallskip
It  was recently observed \cite{LeB0,LeB} that many more examples of conformally K\"ahler, Einstein--Maxwell metrics exist on {\it toric}  compact complex surfaces  $(M,J)$ with second Betti number $b_2(M)=2$ (i.e. $\C P^1 \times \C P^1$ and the Hirzebruch surfaces $\mathbb{F}_k, k\ge 1$), and that these examples share a remarkable resemblance with the theory of extremal K\"ahler metrics pioneered by Calabi~\cite{calabi}, and intensively developed since then. Further examples on ruled complex surfaces of higher genus are found in the recent work \cite{KTF}. A large family of local toric examples also appears in \cite{ambitoric1}.

\subsection{Outline of the main results}

First, we recast the problem of finding (non-K\"ahler) conformally-K\"ahler,  Einstein--Maxwell metrics within the framework of moment maps for the action of a suitable infinite dimensional subgroup of hamiltonian diffeomorphisms on the Fr\'echet space of compatible complex structures on a given compact symplectic manifold or orbifold $(M,\omega)$, with respect to a suitably defined symplectic structure. This is the content of Theorem~\ref{thm:moment-map-setting}, which can be viewed as an extension of the results by Fujiki~\cite{fujiki} and Donaldson~\cite{donaldson} obtained in the cscK case to the more general conformally--K\"ahler, Einstein--Maxwell case. We emphasize that this construction depends upon fixing in advance a hamiltonian vector field $K$ within the Lie algebra of a compact subgroup $G$ of hamiltonian transformations of $(M, \omega)$, as well as a positive hamiltonian $f$ for $K$.

As a direct consequence of the formal GIT picture provided by Theorem~\ref{thm:moment-map-setting}, we show in Corollary~\ref{c:symplectic-futaki} that there exists a Futaki invariant which is an obstruction to the existence of  Einstein--Maxwell  metrics with conformal factor $f$, in the conformal classes of  $G$-invariant, $\omega$-compatible K\"ahler metrics on a given compact symplectic manifold  (or orbifold) $(M,\omega)$.

As another application of Theorem~\ref{thm:moment-map-setting}, we consider the problem of the existence of conformally-K\"ahler, Einstein--Maxwell metrics within the classical framework of K\"ahler geometry,  pioneered by the work of Calabi~\cite{calabi}, in which the complex structure $J$ is fixed, and the K\"ahler metric $g$ varies within  a given K\"ahler class on $(M,J)$. Corollary~\ref{c:complex-futaki} provides  the relevant  notion of Futaki invariant in this setting.

\smallskip
In Section~\ref{s:toric},  we specialize to the case when $(M, \omega, \T)$ is a compact toric symplectic manifold or orbifold, and $K \in {\rm Lie}(\T)$ is a vector field with a positive hamiltonian $f$. We demonstrate how the existence problem for toric conformally-K\"ahler, Einstein--Maxwell metrics with conformal factor $f$ fits within the general framework of toric cscK metrics, developed by S. Donaldson~\cite{Do-02}. This leads both to a more subtle obstruction, the so-called $K$-polystability (Theorem~\ref{toric-futaki} and Corollary~\ref{c:K-stability}) and a uniqueness result (Theorem~\ref{uniqueness}). The main conjecture from \cite{Do-02} is then that a solution exists if and only if the corresponding Delzant polytope of $(M,\omega, \T)$  is $K$-polystable (Conjecture~\ref{c:donaldson}).

\smallskip
As an application of the theory,  in Section~\ref{s:examples} we specialize to the case of compact symplectic toric $4$-orbifolds whose Delzant polytope is either a triangle (i.e., up to an orbifold cover $(M,\omega)$ is a weighted projective plane) or a quadrilateral (equivalently, a compact toric orbifold with second Betti number $b_2=2$).

In the first case,  we prove in Theorem~\ref{wpp-classification} that  any conformally-K\"ahler, Einstein--Maxwell metric must be Einstein and, therefore,  conformal to the Bochner--flat K\"ahler metric on such a space, see  \cite{Bryant, DG} for a classification.

In the second case, we demonstrate in Theorem~\ref{ambitoric-EM-classification} how a straightforward modification of the analysis in \cite{ambitoric2} confirms the main Conjecture~\ref{c:donaldson} in this special toric case.  As a bi-product, we establish an interesting bijective correspondence between compact toric $4$-orbifolds with $b_2=2$,  admitting conformally K\"ahler,  Einstein--Maxwell metrics on one side,  and compact toric $4$-orbifolds with $b_2=2$ admitting  an extremal K\"ahler metric with positive scalar curvature on the other (Proposition~\ref{duality}). This leads to many new examples (Corollary~\ref{c:existence}) which can be considered as orbifold compactifications of the Riemannian analogues of the Pleba\'nski--Demia\'nski~\cite{PD} Lorentzian Einstein--Maxwell metrics.

\smallskip
Various related topics are discussed in Section \ref{varia}.

The first one is  the definition of a quantized version of the new Futaki invariant
on a polarized variety,  which we discuss in Section~\ref{s:quantized}. This leads to a suitable notion of `$K$-stability' for conformally-K\"ahler, Einstein--Maxwell metrics associated to a polarization.  While we do not explore in this paper the relation between this notion of $K$-stability and the existence of conformally-K\"ahler, Einstein--Maxwell metrics, we note that in the toric case, this suggested notion reduces to the one introduced in Section~\ref{s:toric} when one restricts to special (toric) degenarations.

In Section~\ref{s:conformally-einstein},  we characterize conformally K\"ahler, Einstein--Maxwell metrics on connected four manifolds
which are in fact Einstein, showing that the corresponding K\"ahler metric must be extremal and the conformal factor must be a multiple of its scalar curvature.

In the final Section~\ref{s:computing-futaki},  we present two uniqueness results,  involving explicit calculations of
the Futaki invariant. The first one (Proposition~\ref{wpp-futaki}) shows that the Futaki invariant of a toric K\"ahler metric  with respect to a positive Killing potential on a weighted projective plane $\C P^2_{a_0, a_1,a_2}$ vanishes if and only if $\C P^2_{a_0, a_1, a_2}$ admits an Einstein, conformally-K\"ahler  metric,  in which case, as mentioned above, the function $f$ is determined uniquely up to scale.  The second result (Corollary~\ref{thm:classification-product}) shows that a conformally
K\"ahler, Einstein--Maxwell metric on $\mathbb{CP}^1\times\mathbb{CP}^1$,  which  has a $2$-dimensional torus worth of isometries,  must be isometric to one of the examples found by LeBrun in \cite{LeB0}. Symbolic computational software calculations of the Futaki invariant are used in the derivation of this result.

\section*{Acknowledgement} The first author was supported in part by an NSERC Discovery Grant and is grateful to the Institute of Mathematics and Informatics of the Bulgarian Academy of Sciences where a part of this project was realized. He is also grateful to  the Simons Center for Geometry and Physics, Stony Brook, for a workshop invitation in 2015. The authors thank  Song Sun and G\'abor Sz\'ekelyhidi for their valuable suggestions concerning Section~\ref{s:quantized}, as well as Simon Donladson and  Claude LeBrun
for helpful discussions and comments. They also thank Christina T{\o}nnesen-Friedman for exchanges involving the paper \cite{KTF}.
Finally, the second author acknowledges the Centre Interuniversitaire de Recherche en G\'eom\'etrie Et Topologie (CIRGET) in
Universit\'e du Qu\'ebec \`a Montr\'eal for its support and hospitality during the visit in which this work was initiated.

\section{A Futaki invariant}\label{s:futaki}

\subsection{The momentum map setting}\label{s:symplectic-futaki} In this section we adapt the arguments from \cite{donaldson,fujiki}. Let $\tilde g= \frac{1}{f^2} g$ be a conformally K\"ahler,  Einstein--Maxwell metric on a compact complex manifold $(M,J)$ of complex dimension $m \ge 2$.  As mentioned in Section~\ref{s:preliminaries}, the vector field $K=J {\rm grad}_{g} f$ is Killing for both $g$ and $\tilde g$ and has zeroes. It follows that $K \in {\rm Lie}(\tilde G)$,  where {\rm Lie} denotes a Lie algebra and $\tilde G= {\rm Isom}(M,\tilde g) \cap {\rm Aut}_{r}(M,J)$ is intersection of the isometry group of $\tilde g$ with the group ${\rm Aut}_r(M,J)$ of reduced automorphisms of $(M,J)$, see e.g. \cite{gauduchon-book} for a definition. Notice that the K\"ahler metric $g$ is also $\tilde G$-invariant,  and $K$ belongs to the center of ${\rm Lie}(\tilde G)$. Indeed,  $\tilde G$ acts by conformal transformations of $g$ and preserves the complex structure $J$. As the only K\"ahler metrics in the conformal class $[g]$ are homotheties of $g$, and $\tilde G$ acts trivially in cohomology (being a subgroup of ${\rm Aut}_r(M,J)$), it follows that $\tilde G$ preserves the K\"ahler form $\omega=g(J\cdot, \cdot)$ (as it is harmonic), hence also $g$.  As both $g$ and $\tilde g$ are $\tilde G$-invariant, the conformal factor $\frac{1}{f^2}$ must be $\tilde G$ invariant too, i.e. $K$ must be in the center of ${\rm Lie}(\tilde G)$.  For practical purposes, it will be more convenient to work with a maximal torus $G\subset \tilde G$ in the connected component of the identity of $\tilde G$, but this assumption is not essential in what follows, as long as $K$ belongs to the Lie algebra $\mathfrak{g}$ of the subgroup $G$. Note that being central, $K \in \mathfrak{g}:={\rm Lie}(G)$ if $G$ is a maximal sub-torus of $\tilde G$. We shall further fix a Killing potential $f$ of $K$, which is, equivalently, a hamiltonian function associated to $K$, i.e. $\imath_{K} \omega = -df$.

Denote by $\mathcal{C}_{\omega}^G$ the space of all $\omega$-compatible $G$-invariant complex structures on $(M, \omega)$ and consider the natural action on $\mathcal{C}_{\omega}^G$ of the group ${\rm Ham}^{G}(M,\omega)$ of $G$-equivariant hamiltonian transformations of $(M,\omega)$. We shall  identify its Lie algebra  with the space $(C^{\infty}_0(M))^G$ of $G$-invariant smooth functions on $M$ with zero mean with respect to $v_{\omega}= \omega^m/m!$ (endowed with the Poisson bracket).

The space $\mathcal{C}_{\omega}^G$ carries  a formal Fr\'echet K\"ahler structure, $(\J, {\Om}^f)$, defined  by
\begin{equation*}
{\J}_{J} (\dot J) = J \dot J, \, \ \  {\Om}^f_{J}(\dot{J}_1, \dot{J}_2) = \frac{1}{2} \int_M {\rm tr} \Big(J \dot{J}_1 \dot{J}_2\Big) \frac{1}{f^{2m-1}} v_{\omega},
\end{equation*}
where the tangent space of $\mathcal{C}_{\omega}^G$ at $J$ is identified to be the Fr\'echet space of smooth sections $\dot J$ of ${\rm End}(TM)$
satisfying
\begin{equation*}
\dot{J} J + J \dot{J} =0,  \, \, \   \omega(\dot{J} \cdot , \cdot) + \omega(\cdot, \dot{J} \cdot) =0.
\end{equation*}
Note that the formal complex structure ${\J}$ is the same as the one in \cite{donaldson,fujiki}, whereas we have used the fixed positive hamiltonian  $f$ in order to modify the formal symplectic form $\Om^f$ (which stays closed, as can easily be checked).

In the following we shall use the notation $g_J$ to denote the K\"ahler metric corresponding to a given $G$-invariant
$\omega$-compatible complex structure $J$: $g_J(\cdot,\cdot)=\omega(\cdot,J\cdot )$. The conformally related metric $\frac{1}{f^2} g_J$
will be denoted $g_{J,f}$. Objects associated to these metrics, such as their scalar curvatures, will be indexed similarly.
\begin{thm}\label{thm:moment-map-setting} The action of ${\rm Ham}^{G}(M,\omega)$ on $(\mathcal{C}_{\omega}^G, \J, \Om^f)$ is hamiltonian with a momentum map given by the $\langle \cdot, \cdot \rangle_f$-dual of the scalar curvature $s_{J,f}$ of the Hermitian metric $g_{J,f} = \frac{1}{f^2} g_J$, where
$\langle \varphi, \psi \rangle_f= \int_M \varphi \psi \frac{1}{f^{2m+1}} v_{\omega}$ is an ${\rm ad}$-invariant scalar product defined on the Lie algebra $(C^{\infty}_0(M))^G$ of ${\rm Ham}(M,\omega)^G$.
\end{thm}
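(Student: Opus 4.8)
The plan is to follow the Donaldson--Fujiki template, tracking how the insertion of the weight $f^{-(2m-1)}$ in $\Om^f$ modifies the computation, so that the relevant scalar curvature becomes that of the conformally rescaled Hermitian metric $g_{J,f}$. First I would recall that for a hamiltonian $\varphi \in (C^\infty_0(M))^G$ the induced vector field on $\cC_\omega^G$ at $J$ is $\dot J_\varphi = \cL_{X_\varphi} J$, where $X_\varphi$ is the hamiltonian vector field of $\varphi$; one checks $\dot J_\varphi$ lies in $T_J\cC_\omega^G$ and is $G$-invariant since $\varphi$ is. The goal is to produce a function $\mu(J)$ on $\cC_\omega^G$ (the would-be momentum map, landing in the dual of the Lie algebra via $\ip{\cdot,\cdot}_f$) such that for every $\varphi$ and every tangent vector $\dot J$,
\begin{equation*}
\Om^f_J(\dot J_\varphi, \dot J) = \bigl(d\mu(\dot J)\bigr)(\varphi) = \int_M \varphi \,\bigl(d_J\, s_{J,f}\bigr)(\dot J)\, \frac{1}{f^{2m+1}}\, v_\omega.
\end{equation*}
The candidate is $\mu(J) = s_{J,f}$, paired through $\ip{\cdot,\cdot}_f$, and one must also verify that $s_{J,f}$ has the right mean, i.e. that $\int_M s_{J,f} f^{-(2m+1)} v_\omega$ is independent of $J$ (this is the condition that $\mu$ land in the dual of the \emph{zero-mean} functions, equivalently that the pairing be with $\varphi$ only; it follows because this integral is, up to a constant, a fixed multiple of the integral over $M$ of a closed form against $\omega^{m-1}$-type data, hence cohomological).

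\textbf{Key steps.} (1) Using $\eqref{yamabe}$ with $g=g_J$, express $s_{J,f}$ explicitly in terms of $s_{g_J}$, $f$, and $\Delta_{g_J}$. (2) Compute the linearization of each piece under a variation $\dot J$ of the complex structure, keeping $\omega$ and $f$ fixed: the variation of $s_{g_J}$ is the classical Donaldson--Fujiki formula, and the variation of the Laplacian term is handled by the standard identities for $\delta_{g_J}$, $D^{g_J}$ under $\dot J \mapsto \dot J$. (3) Integrate $\int_M \varphi\,(d_J s_{J,f})(\dot J)\, f^{-(2m+1)} v_\omega$ by parts; the weight $f^{-(2m+1)}$ interacts with the divergence-type operators and produces exactly the extra $f^{-(2m-1)}$-weighted term matching the left side. (4) Independently compute $\Om^f_J(\dot J_\varphi, \dot J) = \tfrac12 \int_M \trace(J\,\cL_{X_\varphi}J\cdot \dot J)\, f^{-(2m-1)} v_\omega$ and rewrite $\cL_{X_\varphi}J$ so that this becomes an integral of $\varphi$ against a second-order operator applied to $\dot J$, then integrate by parts. (5) Match the two expressions. (6) Check $\mathrm{ad}$-invariance of $\ip{\cdot,\cdot}_f$: for $\varphi,\psi,\chi \in (C^\infty_0(M))^G$ one needs $\ip{\{\varphi,\psi\},\chi}_f + \ip{\psi,\{\varphi,\chi\}}_f = 0$, which holds because $X_\varphi$ is hamiltonian hence preserves $\omega^m$, and $f$ is $G$-invariant so $X_\varphi(f)$ relates to $\{\varphi, f\}$; one uses that $\{\varphi,\cdot\}$ is a derivation and integrates the total Poisson-divergence against the fixed density $f^{-(2m+1)} v_\omega$, the $f$-weight being annihilated in the needed combination precisely because of the $G$-invariance of $\varphi$ (so $X_\varphi(f)$ appears with opposite signs in the two terms and cancels).

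\textbf{Main obstacle.} The delicate point is step (3)--(5): the bookkeeping of the $f$-powers through the integrations by parts. One must verify that the \emph{particular} exponent $2m-1$ in $\Om^f$ is exactly the one for which the momentum-map identity closes with the Hermitian scalar curvature $s_{J,f}$ (rather than some other weighted curvature quantity), and that the boundary/mean-value terms all vanish or are $J$-independent. A secondary subtlety is confirming that $\Om^f$ is genuinely closed on $\cC_\omega^G$ — asserted in the text as "easily checked" — which should be done by the same type of computation as in Fujiki, noting that multiplying the integrand by the fixed function $f^{-(2m-1)}$ does not affect closedness since $f$ does not depend on $J$ and the exterior derivative on $\cC_\omega^G$ only differentiates in the $J$-directions. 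I would also record that when $f$ is constant the whole construction specializes to Donaldson--Fujiki, as a consistency check.
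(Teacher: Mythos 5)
You are following the same Donaldson--Fujiki template as the paper: linearize the weighted pairing $\int_M s_{J,f}\,\varphi\,f^{-(2m+1)}v_\omega$ in $J$, compute $\Om^f_J(\mathcal{L}_{X_\varphi}J,\dot J)$ using $\mathcal{L}_{X_\varphi}J=2J(DJd\varphi^\sharp)^{\rm sym}$ (up to sign), integrate by parts and match, and get equivariance from the fact that $f$ Poisson-commutes with every $G$-invariant function. So the route is not genuinely different; the issue is that your central steps (3)--(5) hide a real gap: the matching is \emph{not} just ``bookkeeping of the $f$-powers''. After differentiating and integrating by parts one is left with extra terms of the shape $\int_M h f^{-2m} g_J(\dot J, DJdf)\,v_\omega$, $\int_M f^{-2m} g_J(\dot J, df\otimes Jdh)\,v_\omega$ and $\int_M h f^{-(2m+1)} g_J(\dot J, df\otimes Jdf)\,v_\omega$, and these disappear only because (i) $f$ is a Killing potential for \emph{every} $g_J$ with $J\in\mathcal{C}^G_\omega$ (its hamiltonian vector field $K$ is fixed, lies in $\mathfrak{g}$, and $J$ is $G$-invariant), equivalently the $J$-anti-invariant part of $DJdf$ vanishes, and (ii) $\dot J$ anti-commutes with $J$, so its $g_J$-pairing with $J$-invariant tensors vanishes. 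This is exactly the input invoked at the last equality of \eqref{long-computation}; without it the computation does not close, for the exponent $2m-1$ or any other. You would also save work by adopting the paper's order of operations: rather than linearizing the Laplacian term of \eqref{yamabe} directly, first integrate by parts as in \eqref{rough-futaki}, so that the only remaining $J$-dependence is through $g_J$ evaluated on the fixed $1$-forms $df,dh$ and through $s_J$, for which $\dot s_J=-\delta J\delta\dot J$; then no variation of $\Delta_J$ is ever needed.

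Two smaller corrections. The normalization check you propose (that $\int_M s_{J,f}f^{-(2m+1)}v_\omega$ be independent of $J$) is not required for the momentum-map statement, since the momentum map is simply the functional $\varphi\mapsto\langle s_{J,f},\varphi\rangle_f$ on the fixed space $(C^\infty_0(M))^G$; moreover your ``cohomological'' justification is not valid for nonconstant $f$ --- neither term of \eqref{rough-futaki} with $h=1$ is separately $J$-independent, only their sum is, and this follows from the very variational identity being proved, by taking $h\equiv 1$ (so $DJdh=0$). Finally, for ${\rm ad}$-invariance the mechanism is not a cancellation ``with opposite signs'': one has $X_\varphi(f)=\{\varphi,f\}=0$ outright because $\varphi$ is $G$-invariant and the hamiltonian flow of $f$ lies in $G$, so $X_\varphi$ preserves the density $f^{-(2m+1)}v_\omega$ and the integral of $X_\varphi(\psi\chi)$ against it vanishes.
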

\begin{proof} Recall that the scalar curvature $s_{J,f}$ of  $g_{J,f} = \frac{1}{f^2} g_J$ is related to the scalar curvature $s_J$ of $g_J$ by \eqref{yamabe}, i.e.
\begin{equation}\label{s-symplectic}
s_{J,f} =2\Big(\frac{2m-1}{m-1}\Big) f^{m+1} \Delta_{J}\Big(\frac{1}{f}\Big)^{m-1} + s_{J} f^2,
\end{equation}
where $\Delta_J$ denotes  the Laplace operator of  $g_{J}$.  Integrating by parts, it follows  that for any smooth function $h$
\begin{equation}\label{rough-futaki}
\begin{split}
\int_M &s_{J,f} \Big(\frac{h}{f^{2m+1}}\Big) v_{\omega} =  2m(2m-1) \int_M \frac{h}{f^{2m+1}}g_J(df, df) v_{\omega}\\
       & -2(2m-1) \int_M \frac{1}{f^{2m}}g_J(df, dh) v_{\omega}  + \int_M s_J \Big( \frac{h}{f^{2m-1}}\Big) v_{\omega} \\
       =&\ \ \  2m(2m-1) \int_M \frac{h}{f^{2m+1}}g_J\Big(\omega^{-1}(df),  \omega^{-1}(df)\Big) v_{\omega}\\
       & -2(2m-1) \int_M \frac{1}{f^{2m}}g_J\Big(\omega^{-1}(df), \omega^{-1}(dh)\Big) v_{\omega}  \\
       &+ \int_M s_J \Big( \frac{h}{f^{2m-1}}\Big) v_{\omega},
\end{split}
\end{equation}
where $\omega^{-1} = -Jg^{-1} : T^*_pM \to T_pM$ denotes the symplectic isomorphism between $1$-forms  and  vectors.

For a curve $J(t) \in \mathcal{C}^G_{\omega}$, denote by $\dot{J}= \frac{d}{dt} J(t)$ its first variation. As $J(t)$ are $\omega$-compatible almost-complex structures, we have $\dot{J} J = - J\dot J$ and $\omega(\dot J \cdot , \cdot ) = g_{J}(J \dot J, \cdot, \cdot)= -\omega(\cdot, \dot{J} \cdot)=g_J(\cdot , J \dot J \cdot)$. It follows that $\dot{g}_J(\cdot, \cdot) = -g_J (\cdot,  J \dot J \cdot)= g_J(\cdot, \dot{J} J \cdot)$. It is well-known, see e.g. \cite{gauduchon-book}, that with respect to the K\"ahler metric $g_J$,
$\dot{s}_{J} =- \delta J \delta \dot{J}$. Thus, for any path $J(t) \in \mathcal{C}^{G}_{\omega}$ and a smooth function $h$, we have
\begin{equation}\label{long-computation}
\begin{aligned}
\frac{d}{dt} &\int_M  (s_{J(t),f}) \frac{h}{f^{2m+1}} v_{\omega}  = 2m(2m-1) \int_M \frac{h}{f^{2m+1}}\dot{g}_J\Big(\omega^{-1}(df), \omega^{-1}(df)\Big) v_{\omega}\\
        & -2(2m-1) \int_M \frac{1}{f^{2m}}\dot{g}_J\Big(\omega^{-1}(df), \omega^{-1}(dh)\Big) v_{\omega}+\int_M \dot{s}_J \Big( \frac{h}{f^{2m-1}}\Big) v_{\omega}\\
      =& -2m(2m-1) \int_M \frac{h}{f^{2m+1}}g_J\Big(Jdf, J\dot{J} J df\Big) v_{\omega}\\
         & +2(2m-1) \int_M \frac{1}{f^{2m}}{g}_J\Big(Jdf,  J\dot{J} Jdh\Big) v_{\omega}  - \int_M  (\delta J \delta \dot{J}) \Big( \frac{h}{f^{2m-1}}\Big) v_{\omega} \\
       =&\ \ \   2m(2m-1) \int_M \frac{h}{f^{2m+1}}g_J(df, J\dot{J} df) v_{\omega}\\
          & -2(2m-1) \int_M \frac{1}{f^{2m}}{g}_J(df,  J\dot{J} dh) v_{\omega} + \int_M g_{J}\Big(\dot{J},  D J d \Big( \frac{h}{f^{2m-1}}\Big) \Big)v_{\omega} \\
       =&\ \ \  2m(2m-1) \int_M \frac{h}{f^{2m+1}}g_J(df, J\dot{J}  df) v_{\omega}\\
          & -2(2m-1) \int_M \frac{1}{f^{2m}}{g}_J(df,  J\dot{J} dh) v_{\omega} +
                \int_M \frac{1}{f^{2m-1}} g_{J}(\dot{J},  D J d h)v_{\omega}  \\
         &- (2m-1) \int_M \frac{h}{f^{2m}} g_{J}(\dot J, D J df) v_{\omega}
             - 2(2m-1) \int_M \frac{1}{f^{2m}} g_{J}(\dot J, df\otimes Jdh) v_{\omega}\\
         &+2m(2m-1)\int_M \Big(\frac{h}{f^{2m+1}}\Big) g_J(\dot J, df\otimes Jdf) v_{\omega} \\
        =&\ \ \ \int_M \frac{1}{f^{2m-1}} g_{J}(\dot{J},  D J d h)v_{\omega},
\end{aligned}
\end{equation}
where we have used for the last equality the fact that $f$ is a Killing with respect to $g_J$ if and only if the $J$-anti-invariant part of $D J df$ is zero ($D$ stands for the Levi--Civita connection of $g_J$), see e.g. \cite{gauduchon-book}, and that $\dot J$ anti-commutes with $J$.

On the other hand, for any $\varphi \in (C^{\infty}_0(M))^G$, the induced vector field on $\mathcal{C}_{\omega}^G$ is $\hat Z_J= -\mathcal{L}_{Z} J = -2 J (D Jd\varphi^{\sharp})^{\rm sym}$, where $Z= J d\varphi ^{\sharp}$ is the hamiltonian vector field corresponding to $\varphi$, the superscript  `$\sharp$' stands for the map $g_{J}^{-1} : T^*M \to TM$ whereas  the superscript `${\rm sym}$' denotes the symmetric part  with respect to $g_J$.  It follows  that
\begin{equation*}
\begin{split}
\Om^f_{J} (\hat Z _J, \dot{J})& = -\int_M \frac{1}{f^{2m-1}}g_J(D J d\varphi, \dot J) v_{\omega} \\
                                                  & = -\int_{M} \Big(\frac{\dot{s}_{g,J}}{f^{2m+1}}\Big) \varphi  \ v_{\omega},
\end{split}
\end{equation*}
showing that ${s_{J,f}}$ is the $\langle \cdot, \cdot \rangle_{f}$-dual of a momentum map associated to each $\hat Z_J$. The ${\rm Ham}^G(M,\omega)$-equivariance of the map $\mu(J):=-\langle s_{J,f}, \cdot \rangle_f$ is a direct consequence of the fact that the inner product $\langle \cdot, \cdot \rangle_f$ is  ${\rm Ad}$-invariant with respect to ${\rm Ham}^G(M,\omega)$, which in turn uses that $f$ Poisson-commutes with any function in $(C^{\infty}_0(M))^G$. \end{proof}
\begin{rem}\label{AK-futaki}  Instead of $\mathcal{C}_{\omega}^G$, one can consider the larger contractible Frech\'et space $\mathcal{AK}_{\omega}^G$ of $G$-invariant $\omega$-compatible almost-K\"ahler  structures $J$, and extend the definition of $({\bf J}, {\bf \Omega}^f)$ in an obvious way. Then,  the computation in the proof of Theorem~\ref{thm:moment-map-setting} and the formulae in \cite[Ch.~8]{gauduchon-book} (see also \cite[Lemma~2.1 \& Prop.~3.1]{lejmi}) imply that the momentum map for the action of ${\rm Ham}^G(M,\omega)$ on $(\mathcal{AK}_{\omega}^G, {\bf J}, {\bf \Omega}^f)$ is the $\langle \cdot, \cdot \rangle_f$-dual of the functional
\begin{equation*}
s_{J,f}:=2\Big(\frac{2m-1}{m-1}\Big) f^{m+1} \Delta_{J}\Big(\frac{1}{f}\Big)^{m-1} + s_{J} f^2,
\end{equation*}
where $s_{J}$ denotes the {\it Hermitian} scalar curvature of $g_J$ (i.e. the trace of its Ricci form with respect to the canonical Hermitian connection, see e.g \cite{lejmi}) and $\Delta_J$ is the Laplacian of $g_J$.
\end{rem}
As an immediate consequence of Theorem~\ref{thm:moment-map-setting} (or, equivalently, formula \eqref{long-computation} with $h$ a
Killing potential w.r.t. $g_J$) and Remark~\ref{AK-futaki} (using that $\mathcal{AK}_{\omega}^G$ is contractible), we obtain
\begin{cor}\label{c:symplectic-futaki} Let $(M^{2m}, \omega)$ be a compact symplectic manifold or orbifold, $G\subset {\rm Ham}(M,\omega)$ a compact subgroup with Lie algebra $\mathfrak{g}$,  and $f$ an everywhere positive hamiltonian of an element $K \in \mathfrak{g}$. Then
\begin{equation}\label{c0}
c_{\omega, f}: = \Big(\int_M \frac{s_{J,f}}{f^{2m+1}} v_{\omega}\Big) \left/ \Big(\int_M \frac{1}{f^{2m+1}} v_{\omega}\Big)\right.
\end{equation}
and, for each vector field $H \in \mathfrak{g}$ with a hamiltonian $h$, the integral
\begin{equation}\label{F}
\mathfrak{F}^G_{\omega, f} (H) := \int_{M} (s_{J,f}-c_{\omega,f})\frac{h}{f^{2m+1}} v_{\omega}
\end{equation}
are independent of the choice of $J \in \mathcal{C}^G_{\omega}$ and  of the hamiltonian $h$.  In particular, the linear map $\mathfrak{F}^G_{\omega, f} : \mathfrak{g} \to \R$ must be zero,  should a conformally K\"ahler, Einstein--Maxwell metric with conformal factor $1/f^2$ exist for some $J \in \mathcal{C}^G_{\omega}$.
\end{cor}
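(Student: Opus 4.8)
The plan is to treat Corollary~\ref{c:symplectic-futaki} as the formal Fujiki--Donaldson packaging of the momentum map of Theorem~\ref{thm:moment-map-setting} into a character, the only analytic input being the computation \eqref{long-computation} already carried out there. The key identity I would use is the one extracted from the proof of Theorem~\ref{thm:moment-map-setting}, and extended to the contractible space $\mathcal{AK}_{\omega}^G$ of $G$-invariant $\omega$-compatible almost-K\"ahler structures by Remark~\ref{AK-futaki}: for any smooth function $h$ and any smooth path $J(t)$ in $\mathcal{AK}_{\omega}^G$,
\[
\frac{d}{dt}\int_M s_{J(t),f}\,\frac{h}{f^{2m+1}}\,v_\omega
= \int_M \frac{1}{f^{2m-1}}\,g_{J(t)}\big(\dot J(t),\, D^{J(t)}J(t)\,dh\big)\,v_\omega
= -\,\Om^f_{J(t)}\big(\widehat Z_{J(t)},\,\dot J(t)\big),
\]
where $D^{J}$ is the Levi--Civita connection of $g_J$ and $\widehat Z_J=-\mathcal L_{Z_h}J$ is the fundamental vector field on $\mathcal{AK}_{\omega}^G$ generated by the $\omega$-Hamiltonian flow of $h$.

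First I would deal with connectedness. Since $\mathcal{C}^G_\omega$ need not be connected, I pass to the contractible (hence connected) space $\mathcal{AK}_{\omega}^G\supset \mathcal{C}^G_\omega$, on which, by Remark~\ref{AK-futaki}, the above identity still holds with $s_{J,f}$ built from the Hermitian scalar curvature --- which agrees with the Riemannian one on the integrable locus $\mathcal{C}^G_\omega$. Taking $h\equiv 1$ the right-hand side vanishes outright, so $\int_M s_{J,f}f^{-(2m+1)}v_\omega$ is the same for all $J\in\mathcal{C}^G_\omega$, and since the denominator in \eqref{c0} does not involve $J$, the constant $c_{\omega,f}$ is well-defined. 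Next, for $H\in\mathfrak g$ I fix a hamiltonian $h$, i.e. $\imath_H\omega=-dh$; this $h$ depends only on $\omega$ and $H$ (not on $J$), and its $\omega$-Hamiltonian vector field is $H$ itself. Because every $J\in\mathcal{AK}_{\omega}^G$ is $G$-invariant we have $\mathcal L_H J=0$, hence $\widehat Z_J=-\mathcal L_H J\equiv 0$, so the identity gives that $\int_M s_{J,f}\,h\,f^{-(2m+1)}v_\omega$ is constant along any path in $\mathcal{AK}_{\omega}^G$, therefore independent of $J\in\mathcal{C}^G_\omega$. (Equivalently, as the paper hints, one may argue straight from \eqref{long-computation}: for $J\in\mathcal{C}^G_\omega$ the field $H$ is Killing for $g_J$, so $D^JJ\,dh$ is $J$-invariant while $\dot J$ is $J$-anti-invariant, and the integrand vanishes pointwise.) Combining these, $\mathfrak F^G_{\omega,f}(H)=\int_M s_{J,f}\,h\,f^{-(2m+1)}v_\omega-c_{\omega,f}\int_M h\,f^{-(2m+1)}v_\omega$ is independent of $J$.

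It then remains to remove the ambiguity in $h$ and extract the obstruction. Two hamiltonians of the same $H$ differ by a constant $c$, and the resulting values of $\mathfrak F^G_{\omega,f}(H)$ differ by $c\big(\int_M s_{J,f}f^{-(2m+1)}v_\omega-c_{\omega,f}\int_M f^{-(2m+1)}v_\omega\big)=0$ by the defining relation \eqref{c0}; choosing hamiltonians normalized by $\int_M h\,v_\omega=0$ makes $H\mapsto\mathfrak F^G_{\omega,f}(H)$ manifestly linear, and by the independence just shown this is unambiguous. Finally, if for some $J\in\mathcal{C}^G_\omega$ the Hermitian metric $g_{J,f}=\tfrac{1}{f^2}g_J$ is conformally K\"ahler, Einstein--Maxwell with conformal factor $1/f^2$, then by \eqref{scal}--\eqref{yamabe} its scalar curvature $s_{J,f}$ is constant, hence equal to $c_{\omega,f}$ by \eqref{c0}; thus $\mathfrak F^G_{\omega,f}(H)=\int_M(s_{J,f}-c_{\omega,f})\,h\,f^{-(2m+1)}v_\omega=0$ for every $H\in\mathfrak g$, and by the $J$-independence established above $\mathfrak F^G_{\omega,f}$ vanishes identically.

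I do not expect a genuine obstacle here: all the analytic weight is already in Theorem~\ref{thm:moment-map-setting}. The only points needing care are (i) transporting the momentum-map identity to the contractible $\mathcal{AK}_{\omega}^G$ via Remark~\ref{AK-futaki}, together with the coincidence of Hermitian and Riemannian scalar curvature on the integrable locus, and (ii) the observation that for $H\in\mathfrak g$ the hamiltonian $h$ is $J$-independent and a simultaneous Killing potential for all $g_J$, $J\in\mathcal{C}^G_\omega$ --- equivalently that the induced vector field $\widehat Z_J$ on $\mathcal{AK}_{\omega}^G$ vanishes identically --- which is precisely what kills the relevant derivative. So the ``hard part'' is really bookkeeping rather than a substantive step.
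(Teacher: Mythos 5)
Your proposal is correct and follows essentially the same route as the paper: the paper derives the corollary as an immediate consequence of formula \eqref{long-computation} (with $h$ a Killing potential, so the derivative vanishes) together with Remark~\ref{AK-futaki} and the contractibility of $\mathcal{AK}_{\omega}^G$, which is exactly the argument you spell out, including the normalization of $h$ via \eqref{c0} and the observation that a conformally K\"ahler, Einstein--Maxwell metric forces $s_{J,f}=c_{\omega,f}$. The only difference is that you make explicit the bookkeeping the paper leaves implicit, which is fine.
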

\begin{defn}\label{d:symplectic-futaki}The linear map $\mathfrak{F}^G_{\omega, f} : \mathfrak{g} \to \R$ defined by \eqref{c0} and \eqref{F} will be called the {\it Futaki invariant} of $(M, \omega, G, f)$.
\end{defn}

\subsection{The Futaki invariant  for a K\"ahler class}\label{s:futaki-kahler-class}  Theorem~\ref{thm:moment-map-setting} implies that finding conformally K\"ahler, Einstein--Maxwell metrics with conformal factor $1/f^2$ corresponds to finding zeroes of a momentum map (in an infinite dimensional setting).  GIT then suggests that a solution to this problem would be unique on each   `complexified' orbit for the  action of ${\rm Ham}^{G}(M,\omega)$,  and would exist in such an orbit under suitable stability conditions. As  observed in \cite{donaldson}, by using the $G$-equivariant Moser lemma, for each $J \in \mathcal{C}^G_{\omega}$, one can identify  the corresponding   `complexified' orbit with the space of $G$-invariant K\"ahler metrics  in the K\"ahler  class $\Omega=[\omega]$ on $(M^{2m}, J)$.  We shall thus adapt our theory to this classical setting.

We thus consider a compact complex K\"ahler  manifold (or orbifold) $(M,J)$, fix a  compact subgroup $G \subset {\rm Aut}_r(M,J)$ in the group of reduced automorphisms of $(M,J)$, and  consider  a fixed K\"ahler class $\Omega$ on $(M,J)$.  Denote by $\mathcal{K}_{\Omega}^G$ the space of $G$-invariant K\"ahler metrics $\omega$ in $\Omega$ (tacitly identifying K\"ahler metrics and forms).  General theory implies that for any $K \in \mathfrak{g}$ and any $\omega \in \mathcal{K}_{\Omega}^G$, the vector field $K$ is  a hamiltonian with respect to $\omega$,  i.e.
\begin{equation*}
\iota_{K}\omega = - d f_{\omega, K}
\end{equation*}
for a smooth function $f_{\omega, K}$ on $M$. Such a function is called a {\it Killing potential} of $K$ with respect to $\omega$. Of course, $f_{K,\omega}$ is defined only up to an additive constant, so in order to avoid this ambiguity, we require
\begin{equation*}
\int_M f_{K,\omega} v_{\omega} = a,
\end{equation*}
where $a$ is a fixed real constant and $v_{\omega}= \frac{1}{m!} \omega^m$ is the volume form of $\omega$. We shall thus denote by $f_{K, \omega, a}$ the unique function satisfying the above two relations.

\smallskip

In what follows, we shall fix the vector field $K$ and,  choosing a reference metric $\omega \in \mathcal{K}_{\Omega}^G$, a constant $a>0$ such that  $f_{K, \omega, a} >0$ on $M$.  It is not hard to prove the following
\begin{lemma}\label{correctness} Suppose that $f_{K,\omega, a}$ is an everywhere positive Killing potential of $K$ with respect to $\omega \in \mathcal{K}_{\Omega}^G$. Then, for any other metric ${\omega}'  \in \mathcal{K}_{\Omega}^G$, the corresponding Killing potential ${f}_{K, \omega', a}$ is everywhere positive.
\end{lemma}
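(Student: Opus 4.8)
The plan is to prove something slightly stronger than the statement, namely that $\min_M f_{K,\omega',a}=\min_M f_{K,\omega,a}$, by showing that the values of $f_{K,\omega,a}$ on the connected components of $\mathrm{Zero}(K)$ depend only on $\Omega$, $K$ and $a$, and not on the representative $\omega\in\mathcal{K}_{\Omega}^G$. The first step is the standard reduction: since $df_{K,\omega,a}=-\iota_K\omega$ and $\omega$ is non-degenerate, the critical set of $f_{K,\omega,a}$ is exactly $Z:=\mathrm{Zero}(K)$, which is a finite disjoint union of compact (totally geodesic) submanifolds $Z_1,\dots,Z_N$ on each of which $f_{K,\omega,a}$ is constant; hence $\min_M f_{K,\omega,a}=\min_i f_{K,\omega,a}(Z_i)$ (and if $K\equiv 0$ the statement is trivial). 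So it is enough to prove $f_{K,\omega',a}(Z_i)=f_{K,\omega,a}(Z_i)$ for every $i$.

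Next I would join $\omega$ and $\omega'$ by the affine segment $\omega_t=(1-t)\omega+t\omega'\in\mathcal{K}_{\Omega}^G$, $t\in[0,1]$ (the set of $G$-invariant K\"ahler metrics in $\Omega$ is convex), writing $\dot\omega_t=dd^c\dot\phi_t$ with $\dot\phi_t$ a $G$-invariant function (available from the $dd^c$-lemma and averaging over $G$, using $G\subseteq\mathrm{Aut}_r(M,J)$). Differentiating $\iota_K\omega_t=-df_{K,\omega_t,a}$ and using that $\dot\phi_t$ is $K$-invariant (so $\mathcal{L}_K\dot\phi_t=0$) together with $K$ holomorphic (so $\mathcal{L}_K$ commutes with $d^c$), one gets $\iota_K dd^c\dot\phi_t=-d(\iota_K d^c\dot\phi_t)$, hence
$\frac{d}{dt}f_{K,\omega_t,a}=\iota_K d^c\dot\phi_t+\lambda(t)$
for a constant $\lambda(t)\in\R$ determined by the normalization $\int_M f_{K,\omega_t,a}\,v_{\omega_t}=a$. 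Since $\iota_K d^c\dot\phi_t$ vanishes along $Z$ (where $K\equiv 0$), restricting to $Z_i$ gives $\frac{d}{dt}\bigl(f_{K,\omega_t,a}|_{Z_i}\bigr)=\lambda(t)$, the same for all $i$; in particular all differences $f_{K,\omega_t,a}(Z_i)-f_{K,\omega_t,a}(Z_j)$, and hence $\max_M f_{K,\omega_t,a}-\min_M f_{K,\omega_t,a}$, are already independent of $t$.

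It then remains only to show $\lambda(t)\equiv 0$, i.e.\ that the common additive shift does not drift; this is where the normalization must genuinely enter, and it is the one non-formal step. I would invoke the Duistermaat--Heckman theorem for the torus $T'\subseteq G$ generated by the flow of $K$ (whose fixed-point set is precisely $Z$): the push-forward measure $(f_{K,\omega_t,a})_*v_{\omega_t}$ on $\R$ is computed by equivariant localization purely in terms of the numbers $f_{K,\omega_t,a}(Z_i)$ and topological data of $(M,\Omega,T')$, so by the previous paragraph it merely translates by $\int_0^t\lambda$ as $t$ varies, while its barycenter stays equal to $a/V$ with $V=\int_M v_{\omega_t}=\int_M v_\omega$ fixed --- forcing that shift, hence $\lambda$, to vanish. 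Equivalently and perhaps more cleanly, the equivariant class $[\omega_t+f_{K,\omega_t,a}]\in H^2_{T'}(M;\R)$ is independent of $t$ (it extends the fixed ordinary class $\Omega$, and the remaining constant ambiguity is removed exactly by the normalization), and $f_{K,\omega_t,a}(Z_i)$ is read off from its restriction to $Z_i$. Either way, $f_{K,\omega',a}(Z_i)=f_{K,\omega,a}(Z_i)$, so $\min_M f_{K,\omega',a}=\min_M f_{K,\omega,a}>0$, which gives the lemma. The main obstacle is precisely this final point: a pure convexity/continuity argument along $\omega_t$ only shows the range of $f_{K,\omega_t,a}$ is constant, not that its minimum stays positive, so one really needs the Duistermaat--Heckman (localization) input to pin the normalizing constant; the rest is routine differentiation.
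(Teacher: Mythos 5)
Your argument is correct in substance, but it takes a genuinely different and much heavier route than the paper, whose proof is two lines: by the $G$-equivariant Moser lemma there is a diffeomorphism $\Phi$ commuting with the $G$-action such that $\Phi^*\omega'=\omega$; since $\Phi_*K=K$ and $\Phi^*v_{\omega'}=v_{\omega}$, the function $f_{K,\omega',a}\circ\Phi$ is a Killing potential of $K$ for $\omega$ with integral $a$, hence equals $f_{K,\omega,a}$, and positivity transfers through the diffeomorphism. Your deformation argument along $\omega_t=(1-t)\omega+t\omega'$ is fine and even yields slightly finer information (each critical value $f_{K,\omega,a}(Z_i)$ is an invariant of $(\Omega,K,a)$), but your closing claim that Duistermaat--Heckman/localization is \emph{really needed} to kill $\lambda(t)$ is not right: differentiating the normalization already does it. Indeed, with the paper's conventions one has $d^c\dot\phi_t(K)=-d\dot\phi_t(JK)=\langle d\dot\phi_t,df_t\rangle_{g_t}$ (because $K=J\,\mathrm{grad}_{g_t}f_t$) and $\frac{d}{dt}v_{\omega_t}=-(\Delta_{g_t}\dot\phi_t)\,v_{\omega_t}$, so
\begin{equation*}
0=\frac{d}{dt}\int_M f_t\,v_{\omega_t}
=\int_M\big(\langle d\dot\phi_t,df_t\rangle_{g_t}+\lambda(t)\big)v_{\omega_t}
-\int_M f_t\,\Delta_{g_t}\dot\phi_t\,v_{\omega_t}
=\lambda(t)\,\mathrm{Vol}(M,\omega),
\end{equation*}
since $\int_M f_t\Delta_{g_t}\dot\phi_t\,v_{\omega_t}=\int_M\langle df_t,d\dot\phi_t\rangle_{g_t}v_{\omega_t}$; hence $\lambda\equiv 0$ by a single integration by parts, with no localization. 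Your localization argument is not wrong (it is essentially the statement that the equivariant class of $\omega_t+f_t$ pins the barycenter), but note that if you insist on it you should localize at ${\rm Zero}(K)$ in the Berline--Vergne sense rather than at the fixed set of the closure torus, since $K$ need not be generic there. In the end both routes express the same fact -- that the pair $(\omega,f_{K,\omega,a})$ is unique up to $G$-equivariant identification -- and Moser's lemma packages it without any computation, which is also the form the paper reuses in the proof of Corollary~\ref{c:complex-futaki}.
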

\begin{proof} By the equivariant Moser lemma, for any $\omega, {\omega}' \in \mathcal{K}_{\Omega}^G$ there exist a diffeomorphism $\Phi$,  which commutes with the action of $G$, such that $\Phi^* {\omega}' = \omega$. As $\Phi_* K = K$, it follows that $f_{K,  \omega', a} \circ \Phi = f_{K, \omega, a}$. \end{proof}

Because of this observation, the following problem, which will be of considerable importance to us, is well-defined on $\mathcal{K}_{\Omega}^G$:
\begin{problem}\label{p1} Does there exists an  $\omega \in \mathcal{K}_{\Omega}^G$, such that the conformal Hermitian metric ${\tilde g}_{\omega}(X,Y) = \frac{1}{f_{K, \omega, a}^2} \omega(X, JY)$ is Einstein--Maxwell?
\end{problem}
By the discussion in the previous section, we are looking for metrics $\omega \in \mathcal{K}_{\Omega}^G$ such that the corresponding scalar curvature of $\tilde g$
\begin{equation}
s_{\tilde g, \omega} =  2\Big(\frac{2m-1}{m-1}\Big) f_{K,\omega, a}^{m+1} \Delta_{\omega}\Big(\frac{1}{f_{K,\omega, a}^{m-1}}\Big) + s_{\omega} f^2_{K, \omega, a}
\end{equation}
is constant, where $\Delta_{\omega}$ and $s_{\omega}$ denote the Laplacian and scalar curvature of the K\"ahler metric $g(\cdot, \cdot) = \omega(\cdot, J\cdot)$ corresponding to $\omega$.

\begin{cor}\label{c:complex-futaki} Let $(M,J)$ be a compact complex manifold of complex dimension $m \ge 2$, $G \subset {\rm Aut}_{r}(M,J)$ a compact subgroup, $K \in \mathfrak{g}={\rm Lie}(G)$ a vector field, and $a>0$ a real constant such that $K$ has a positive Killing potential $f_{K,\omega, a}$ with integral equal to $a$ with respect to any $\omega \in \mathcal{K}^G_{\Omega}$. Then,  for any vector field $H \in {\rm Lie}(G)$ with Killing potential $h_{H, \omega, b}$ with respect to $\omega$, the integral
\begin{equation*}
\int_M s_{\tilde g, \omega} h_{H,\omega, b}\Big(\frac{1}{f_{K,\omega, a}}\Big)^{2m+1} v_{\omega}
\end{equation*}
is independent of the choice of $\omega$ and $b$. In particular,
\begin{equation}\label{c-complex}
c_{\Omega,K,a}:=\frac{\int_M s_{\tilde g, \omega} \Big(\frac{1}{f_{K,\omega, a}}\Big)^{2m+1}v_{\omega}}{\int_M \Big(\frac{1}{f_{K,\omega, a}}\Big)^{2m+1}v_{\omega}}
\end{equation}
is a constant independent of $\omega \in \mathcal{K}_{\Omega}^G$, and  for any vector field $H \in \mathfrak{g}$
\begin{equation}\label{futaki-integral}
\mathfrak{F}^G_{\Omega, K,a} (H):= \int_M \Big( \frac{s_{\tilde g, \omega}-c_{\Omega, K,a}}{f_{K,\omega, a}^{2m+1}}\Big)  h_{H, \omega, b}\ v_{\omega}
\end{equation}
introduces a linear function on $\mathfrak{g}$ which must vanish, should a solution to Problem~\ref{p1} exist.
\end{cor}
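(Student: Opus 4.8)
The plan is to deduce Corollary~\ref{c:complex-futaki} from Corollary~\ref{c:symplectic-futaki} via the equivariant Moser lemma, exactly as one passes from the symplectic picture to the K\"ahler-class picture in the cscK theory. Fix the complex manifold $(M,J)$ and the K\"ahler class $\Omega$, and pick a reference metric $\omega_0\in\mathcal{K}_\Omega^G$. Set $f_0:=f_{K,\omega_0,a}$, which is everywhere positive by hypothesis. Now apply Corollary~\ref{c:symplectic-futaki} to the compact symplectic manifold (or orbifold) $(M,\omega_0)$, with the same compact group $G\subset{\rm Ham}(M,\omega_0)$, the element $K\in\mathfrak g$, and the positive hamiltonian $f_0$. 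That corollary produces a constant $c_{\omega_0,f_0}$ as in \eqref{c0} and, for each $H\in\mathfrak g$ with $\omega_0$-hamiltonian $h$, a value $\mathfrak F^G_{\omega_0,f_0}(H)$ as in \eqref{F}, both independent of the choice of $\omega_0$-compatible $G$-invariant complex structure and of the hamiltonian $h$. The first step is to match these invariants with the integrals written in \eqref{c-complex} and \eqref{futaki-integral}: since $s_{J,f_0}=s_{\tilde g,\omega_0}$ and $v_{\omega_0}$ is the volume form of the K\"ahler metric $\omega_0$, we get $c_{\omega_0,f_0}=c_{\Omega,K,a}$ and $\mathfrak F^G_{\omega_0,f_0}(H)=\mathfrak F^G_{\Omega,K,a}(H)$ for this particular reference metric.

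The second step is to show that these quantities do not depend on the choice of $\omega_0\in\mathcal{K}_\Omega^G$. Given another metric $\omega_1\in\mathcal{K}_\Omega^G$, the equivariant Moser lemma (used already in the proof of Lemma~\ref{correctness}) provides a $G$-commuting diffeomorphism $\Phi$ with $\Phi^*\omega_1=\omega_0$. Then $J':=\Phi^*J$ is an $\omega_0$-compatible $G$-invariant complex structure, so $J'\in\mathcal{C}^G_{\omega_0}$; moreover $\Phi_*K=K$ gives $f_{K,\omega_1,a}\circ\Phi=f_{K,\omega_0,a}$ and likewise $h_{H,\omega_1,b}\circ\Phi=h_{H,\omega_0,b}$ when the additive normalizations are matched, while $\Phi$ pulls back the Hermitian metric $\tilde g_{\omega_1}$ to the Hermitian metric built from $\omega_0$ and $J'$, so $s_{\tilde g,\omega_1}\circ\Phi=s_{J',f_0}$. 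Substituting and changing variables by $\Phi$ in the integrals \eqref{c-complex} and \eqref{futaki-integral} computed with $\omega_1$, one obtains the same integrals computed with $\omega_0$ but with $J$ replaced by $J'\in\mathcal{C}^G_{\omega_0}$. Invoking the $J$-independence asserted in Corollary~\ref{c:symplectic-futaki}, these equal the values at $J$, hence the $\omega$-independence of $c_{\Omega,K,a}$ and of $\mathfrak F^G_{\Omega,K,a}(H)$, and therefore also the asserted independence of the unnormalized integral $\int_M s_{\tilde g,\omega}\,h_{H,\omega,b}\,f_{K,\omega,a}^{-(2m+1)}v_\omega$ of both $\omega$ and $b$ (the latter because adding a constant to $h$ changes the integral by $(c_{\Omega,K,a}$ minus itself$)$ times that constant, using the defining property of $c_{\Omega,K,a}$ — it is exactly here that the normalization by $c_{\Omega,K,a}$ makes $\mathfrak F^G_{\Omega,K,a}$ a well-defined linear functional on $\mathfrak g$, independent of the choice of hamiltonian).

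Linearity of $\mathfrak F^G_{\Omega,K,a}$ in $H$ is immediate from linearity of $H\mapsto h_{H,\omega,b}$ (up to the harmless additive constant, which is killed by the $c_{\Omega,K,a}$-correction). Finally, if Problem~\ref{p1} has a solution $\omega\in\mathcal{K}_\Omega^G$, then $s_{\tilde g,\omega}$ is constant, so $s_{\tilde g,\omega}=c_{\Omega,K,a}$ by \eqref{c-complex}, whence the integrand in \eqref{futaki-integral} vanishes identically and $\mathfrak F^G_{\Omega,K,a}\equiv 0$. The only genuinely delicate point in this argument is the bookkeeping in the second step — verifying that the Moser diffeomorphism $\Phi$ really intertwines all the data (Killing potentials with the correct normalizations, the Hermitian metrics, the scalar curvatures, and the volume forms) so that the change of variables reduces the $\omega_1$-computation to an $\omega_0$-computation with a varied $J$; once that is in place, the $J$-independence from Corollary~\ref{c:symplectic-futaki} does all the remaining work.
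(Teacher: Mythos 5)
Your proposal is correct and follows essentially the same route as the paper: the authors also invoke the $G$-equivariant Moser lemma (as in Lemma~\ref{correctness}) to transfer the computation to $\omega$-compatible $G$-invariant complex structures in $\mathcal{C}^G_{\omega}$, and then conclude from Corollary~\ref{c:symplectic-futaki} (equivalently from \eqref{long-computation} with $h$ a Killing potential). Your write-up simply makes explicit the bookkeeping that the paper leaves implicit.
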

\begin{proof} As in the proof of Lemma~\ref{correctness}, we can use $G$-equivariant Moser lemma  in order to reduce the problem to one concerning $\omega$-compatible $G$-invariant complex structures  in the same connected component of $\mathcal{C}^{G}_{\omega}$.   The claim then follows from Corollary~\ref{c:symplectic-futaki}, or equivalently, by \eqref{long-computation}, by taking $h$ to be  another Killing potential (and thus satisfying $g_J (D J h, \dot J)=0$ as $\dot J$ and $J$ anti-commute. \end{proof}
\begin{defn}\label{def:complex-futaki} The linear  map $\mathfrak{F}^G_{\Omega, K,a} : \mathfrak{g} \to \R$ will be referred to as the Futaki invariant associated to $(\Omega, G, K, a)$.
\end{defn}

\section{The toric case}\label{s:toric} We now specialize to the case when $G=\T$ is an $m$-dimensional torus, where we recall,  $m$ is the complex dimension of $(M,J)$. Before summarizing some of the standard theory of K\"ahler toric metrics, we remark
that we will call a conformally K\"ahler, Einstein-Maxwell metric $\tilde g$ {\em toric} if $Isom(\tilde g) \cap Aut_r(M,J)$ contains a torus $\T$ of dimension $m$.

We shall use the formalism of \cite{guillemin,abreu}, i.e. we fix a $\T$-invariant  symplectic form $\omega$ in $\Omega$,   a positive  hamiltonian function $f$ for $K$, and vary the K\"ahler metrics within the set $\mathcal{C}_{\omega}^{\T}$ of $\T$-invariant, $\omega$-compatible complex structures on $(M, \omega, \T)$.  It is a basic fact of the theory that any two elements $J', J \in \mathcal{C}_{\omega}^{\T}$ are biholomorphic under a $\T$-equivariant diffeomorphism $\Phi$ which acts trivially on the cohomology class of  $[\omega]$, i.e. $\Phi^*\omega$  is a K\"ahler form in  $\mathcal{K}_{[\omega]}^{\T}$ on $(M,J)$. The converse is also true by the equivariant Moser lemma.

As the  action of $\T\subset {\rm Aut}_r(M,J)$ is individually hamiltonian (i.e. each $K \in {\rm Lie}(\T)$ admits a hamiltonian function with respect to $\omega$) it is therefore hamiltonian, so one can use the Delzant description~\cite{Delzant,LT} of $(M, \omega, \T)$  in terms of  the  corresponding momentum image  $\Delta = \mu(M) \subset \mathfrak{t}^*$ and a set of non-negative defining affine functions ${\bf L}=(L_1, \ldots, L_d)$ for $\Delta$, defined on the vector space $\mathfrak{t}^* = {\rm Lie}(\T) \cong \R^m$. In fact, the theory extends naturally to orbifolds, in which case the data $(\Delta, {\bf L})$ is refereed to as a {\it labelled  rational Delzant polytope}, see \cite{LT}, and  classifies $(M, \omega, \T)$ up to an equivariant symplectomorphism.

In the following, we let $\Lambda$ denote the lattice of circle subgroups of $\T$, defined up to a factor $2\pi$ as the kernel of the exponential map from $\tor$ to $\T$, i.e. $\T = \tor / 2\pi \Lambda$.
On the union $M^0:=\mu^{-1}(\Delta^0)$ of the generic orbits of the $\T$-action, the K\"ahler metrics in $\mathcal{C}^{\T}_{\omega}$
have  a general expression due to
V.~Guillemin~\cite{guillemin}. In this description, the
momentum map $\mu\colon M^0\to \mathfrak{t}^*$ is supplemented by angular coordinates
$\ang\colon M\to \tor/2\pi\Lambda$ such that the kernel of $\d\ang$ is
orthogonal to the torus orbits. These action-angle coordinates $(\mu,\ang)$
identify each tangent space to $M^0$ with $\tor\oplus \tor^*$, and the
symplectic form is $\omega=\ip{\d\mu\wedge \d\ang}$, where $\ip{\cdot}$
denotes contraction of $\tor$ and $\tor^*$.  Hence invariant
$\omega$-compatible K\"ahler metrics on $M^0$, i.e. toric metrics, have the form
\begin{equation}\label{toricmetric}
g_J=\ip{\d\mu, {\mathbf G} , \d\mu}+ \ip{ \d\ang,{\mathbf H}, \d\ang},
\end{equation}
where ${\mathbf G}$ is a positive definite $S^2\tor$-valued function of $\mu$
(with $S^2\tor$ the symmetric tensor product of $\tor$),
${\mathbf H}$ is its point-wise inverse in $S^2\tor^*$ (at each point,
${\mathbf G}$ and ${\mathbf H}$ define mutually inverse linear maps $\tor^*
\to\tor$ and $\tor\to\tor^*$) and $\ip{\cdot,\cdot,\cdot}$ denotes the
point-wise contraction $\tor^* \times S^2\tor \times \tor^* \to \R$ or the
dual contraction.  The corresponding almost complex structure is defined by
\begin{equation}\label{toricJ}
J \d\ang = -\ip{ {\mathbf G}, \d\mu},
\end{equation}
and $J$ is integrable if and only if ${\mathbf G}$ is the Hessian of a
 smooth convex function $u$, called the symplectic potential of $(\omega, J)$~\cite{guillemin,abreu}.

Necessary and sufficient conditions for $\mathbf H$ to come from a globally
defined metric on $M$ are obtained in \cite{abreu,ACGT2,Donaldson2}.  Here we
use the first order boundary conditions given in~\cite[\S1]{ACGT2}.  In order
to state them, we denote by ${\tor}_\Fa \sub {\tor}$ (for any face $\Fa\sub
\Delta$) the vector subspace spanned by the inward normals $u_j \in {\tor}$ to
facets containing $\Fa$. Thus the tangent plane to points in the interior $\Fa^0$ of $\Fa$
is the annihilator ${\tor}^0_\Fa\cong({\tor}/{\tor}_\Fa)^*$ of ${\tor}_\Fa$ in
$\tor^*$.

\begin{prop}\label{p:toric-ccs} Let $(M,\omega)$ be a compact toric symplectic
$2m$-manifold or orbifold with momentum map $\mu \colon M\to \Delta
\subset \ \tor^*$, and ${\mathbf H}$ be a positive definite
$S^2\tor^*$-valued function on $\Delta^0$.  Then ${\mathbf H}$ defines a
$\T$-invariant, $\omega$-compatible almost K\"ahler metric $g$ on $M$ via
\eqref{toricmetric} if and only if it satisfies the following
conditions\textup:
\begin{bulletlist}
\item \textup{[smoothness]} ${\mathbf H}$ is the restriction to $\Delta^0$ of a
smooth $S^2\tor^*$-valued function on $\Delta$\textup;
\item \textup{[boundary values]} for any point $\xi$ on the facet $\Fa_j
\subset \Delta$ with inward normal $u_j=dL_j \in \tor$,
\begin{equation}\label{toricboundary}
{\mathbf H}_{\xi}(u_j, \cdot) =0\qquad {and}\qquad (d{\mathbf
H})_{\xi}(u_j,u_j) = 2 u_j,
\end{equation}
where the differential $d{\mathbf H}$ is viewed as a smooth $S^2\tor^*\otimes
{\tor}$-valued function on $\Delta$\textup;
\item \textup{[positivity]} for any point $\xi$ in the interior of a face $\Fa
\sub \Delta$, ${\mathbf H}_{\xi}(\cdot, \cdot)$ is positive definite when
viewed as a smooth function with values in $S^2({\tor}/{\tor}_\Fa)^*$.
\end{bulletlist}
\end{prop}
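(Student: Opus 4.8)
The plan is to notice first that \eqref{toricmetric} and \eqref{toricJ} automatically produce a $\T$-invariant, $\omega$-compatible almost K\"ahler structure on the dense open set $M^0=\mu^{-1}(\Delta^0)$ as soon as $\mathbf H$ is a smooth positive-definite $S^2\tor^*$-valued function on $\Delta^0$, so the whole content of the proposition is the question of when this structure extends smoothly across $M\setminus M^0=\bigcup_{\Fa}\mu^{-1}(\Fa^0)$, the union running over the proper faces of $\Delta$. Since smoothness is local, I would fix a face $\Fa$ of codimension $k$, with $u_{j_1}=dL_{j_1},\dots,u_{j_k}=dL_{j_k}$ the inward normals of the facets through $\Fa$, spanning $\tor_\Fa$, and invoke the (orbifold) Delzant local normal form \cite{Delzant,LT}: up to a finite quotient, an equivariant neighbourhood of $\mu^{-1}(\Fa^0)$ is an open set in $\C^k\times T^*\T^{m-k}$ with its standard symplectic form, on which $\T$ rotates the phases of the $\C^k$-factor and translates the $\T^{m-k}$-factor, the momentum coordinates transverse to $\Fa$ (those pairing with $\tor_\Fa$) being $t_i=\tfrac{1}{2}|z_i|^2$ with conjugate angles $\theta_i=\arg z_i$.

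For necessity, suppose $\mathbf H$ comes from a globally defined smooth metric $g$. Decomposing $\tor=\tor_\Fa\oplus W$ and writing $\mathbf H$ in the corresponding block form, I would pull $g$ back to the local model; the requirement that it extend as a finite, nondegenerate smooth tensor along $\{z_1=\dots=z_k=0\}$ then forces exactly the three listed conditions. The $W$-$W$ block of $\mathbf H$ must extend smoothly and stay positive definite along $\Fa$, because restricting $g$ to the toric symplectic sub-orbifold $\mu^{-1}(\Fa^0)$ (whose acting torus has Lie algebra $\tor/\tor_\Fa$) exhibits it there as an inverse metric --- this is [positivity]. The vanishing $\mathbf H(u_j,\cdot)=0$ along each facet $\Fa_j$, which kills both the $\tor_\Fa$-$W$ block and the $\tor_\Fa$-$\tor_\Fa$ block along $\Fa$, together with $(d\mathbf H)(u_j,u_j)=2u_j$, says nothing more than that each transverse $\C$-factor carries the flat metric $\tfrac{1}{2t}\,\d t^2+2t\,\d\theta^2$ orthogonally to the directions along $\Fa$, so that $\mathbf G(u_j,u_j)\sim\tfrac{1}{2t}$ and $\mathbf H(u_j,u_j)\sim 2t$ as $t\to 0$ --- this is [boundary values]. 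Finally [smoothness] is immediate, since $g$ is smooth on all of $M$.

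For sufficiency I would run this in reverse. Given $\mathbf H$ satisfying the three conditions, [positivity] on $\Delta^0$ makes $\mathbf G=\mathbf H^{-1}$ well-defined there, so \eqref{toricmetric}--\eqref{toricJ} define an $\omega$-compatible almost K\"ahler structure on $M^0$, and the point is to check that it extends smoothly across each $\mu^{-1}(\Fa^0)$. In the local model, the block decomposition of $\mathbf H$ together with \eqref{toricboundary} shows that $\ip{\d\ang,\mathbf H,\d\ang}$ equals $\sum_i 2t_i\,\d\theta_i^2$ plus a smooth $W$-part, while $\ip{\d\mu,\mathbf G,\d\mu}$ equals $\sum_i\tfrac{1}{2t_i}\,\d t_i^2$ plus cross-terms made smooth by the vanishing of the $\tor_\Fa$-$W$ block; adding these recovers $\sum_i|\d z_i|^2$ plus a smooth tensor, hence a smooth Riemannian metric on $M$, and the analogous, easier computation shows that $J$ from \eqref{toricJ} extends smoothly as well. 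Compatibility with $\omega$ and $\T$-invariance are built into the algebraic form of \eqref{toricmetric}--\eqref{toricJ}, so the extended structure is automatically a $\T$-invariant, $\omega$-compatible almost K\"ahler metric.

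The step I expect to be the main obstacle is the careful bookkeeping in the local normal form. Two aspects stand out. In the orbifold case one must carry the finite stabiliser quotients through the whole argument and check that the labels $L_j$ intervene only through their differentials $u_j=dL_j$, exactly as in \eqref{toricboundary}. And one must confirm that conditions restricting only the boundary value and the \emph{first} normal derivative of $\mathbf H$ are genuinely \emph{sufficient}, i.e.\ that no constraints on higher-order transverse derivatives are hidden; this reflects the rigidity of the flat transverse model, which near the origin is pinned down by the leading behaviour prescribed in \eqref{toricboundary}. This is the point at which I would follow the first-order boundary analysis of \cite[\S1]{ACGT2} most closely, cross-checking against the equivalent formulations in \cite{abreu,Donaldson2}.
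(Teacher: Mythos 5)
Your outline is correct and is essentially the argument the paper relies on: the paper itself gives no proof of this proposition but quotes it from \cite{abreu,ACGT2,Donaldson2}, specifically the first-order boundary conditions of \cite[\S1]{ACGT2}, and your plan (Delzant local normal form near a face, flat transverse model $\tfrac{1}{2t}\,dt^2+2t\,d\theta^2$, first-order boundary analysis of the blocks of ${\mathbf H}$) is precisely the proof carried out there. The corner bookkeeping and the smoothness of the $\langle d\mu,{\mathbf G},d\mu\rangle$ part with ${\mathbf G}={\mathbf H}^{-1}$ that you flag as the main obstacle is exactly the content of \cite[\S1]{ACGT2}, so no new idea is missing.
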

It follows that, up to a $\T$-equivariant isometries, the space $\mathcal{C}_{\omega}^{\T}$ can be identified with the function space $\mathcal{S}(\Delta, \bf L)$ of all smooth, strictly convex functions $u$ on $\Delta^0$ such that   ${\bf H}^u := {\rm Hess}(u)^{-1}$  satisfies the conditions of Proposition~\ref{p:toric-ccs}. A key feature of this description is the remarkably simple expression, found by Abreu in \cite{abreu0},  for the scalar curvature $s_J$ of the K\"ahler metric determined by $J \in \mathcal{C}^{\T}_{\omega}$, in terms of the corresponding symplectic potential $u \in \mathcal{S}(\Delta, \bf L)$. We introduce an identification $\tor \cong \R^m$ which gives rise to momentum/angle coordinates $(\mu_1, \ldots, \mu_m, t_1, \ldots, t_m)$ around each point of $M^0$. The scalar curvature of $g_J$ is then given by
\begin{equation}\label{s-abreu}
s_J = -\sum_{i,j=1}^{m}  {\bf H}^u_{ij, ij}= - \sum_{i,j=1}^m u^{,ij}_{,ij},
\end{equation}
where,  for a smooth function $\varphi$ of $\mu=(\mu_1, \ldots, \mu_m)$,  we denote by $\varphi_{, i} $ the partial derivative $\partial \varphi/\partial \mu_i$ and  let ${\bf H}^u:= (u_{,ij})^{-1}$ be the inverse of the Hessian.

\smallskip
Note that \eqref{toricmetric} and the symplectic form are now given by
\begin{equation}\label{toric-metric-u}
g_{J}=\sum_{i,j=1}^m \Big(u_{, ij} d\mu_i d\mu_j + {\bf H}^{u}_{ij} dt_i dt_j\Big),  \ \ \omega = \sum_{i=1}^m d\mu_i \wedge dt_i,
\end{equation}
whereas \eqref{toricJ} reads as
\begin{equation*}
J d \mu_i = \sum_{j=1}^m {\bf H}^{u}_{ij} dt_j,
\end{equation*}
so that, for any smooth function $\varphi$ of $\mu_1, \ldots, \mu_m$,  one has
\begin{equation}\label{laplace}
\begin{split}
dd_J^c \varphi &= \sum_{i,j,k=1}^{m}\Big(\varphi_{,j}{\bf H}^{u}_{jk}\Big)_{,i} d\mu_i \wedge dt_k, \\
\Delta_J \varphi & = - \sum_{i,j=1}^m \Big( \varphi_{,i} {\bf H}^{u}_{ij}\Big)_{,j} \\
                          &= -\sum_{i,j=1}^m \varphi_{,ij} {\bf H}^{u}_{ij}  - \sum_{i,j=1}^m \varphi_{,i} {\bf H}^{u}_{ij, j}.
\end{split}
\end{equation}

\smallskip

In what follows,  $f$ will be a fixed affine function which is positive on $\Delta$ (so that the pull-back of $f$ by $\mu$ is a Killing potential for any K\"ahler metric $g_J$ corresponding to an element $J\in \mathcal{C}_{\omega}^{\T}$). In particular, $f_{,ij}=0$. Thus, one computes for any smooth matrix-valued function ${\bf H}= ({\bf H}_{ij})$ on $\Delta$
\begin{equation}\label{computation}
\begin{split}
\sum_{i,j=1}^m \Big(\frac{{\bf H}_{ij}}{f^{2m-1}}\Big)_{,ij} =  &  \frac{1}{f^{2m-1}}  \sum_{i,j=1}^m {\bf H}_{ij,ij}\\
                                                                                      & + \frac{2m(2m-1)}{f^{2m+1}} \sum_{i,j=1}^m {\bf H}_{ij}f_{,i} f_{,j} \\
                                                                                      & - \frac{2(2m-1)}{{f}^{2m}} \sum_{i,j=1}^m {\bf H}_{ij,j} f_{,i}
\end{split}
\end{equation}
From \eqref{s-symplectic}, \eqref{s-abreu} and \eqref{laplace}, and the above equality one gets for the scalar curvature $s_{J,f}$ of $\frac{1}{f^2}g_J$
\begin{equation}\label{sg}
\frac{s_{J, f}}{f^{2m+1}} = - \sum_{i,j=1}^m \Big(\frac{1}{{f}^{2m-1}} {\bf H}^u_{ij}\Big)_{,ij}.
\end{equation}
\begin{rem} Formula \eqref{sg} can alternatively be obtained by Theorem~\ref{thm:moment-map-setting} and a direct check as in \cite{Do-02} that $- \sum_{i,j=1}^m \Big(\frac{1}{{f}^{2m-1}} {\bf H}^u_{ij}\Big)_{,ij}$ computes the momentum map of the action of ${\rm Ham}^{\T}(M,\omega)$ on $\mathcal{S}(\Delta, \bf L)$.
\end{rem}
Thus, Problem~\ref{p1} reduces to finding $u \in \mathcal{S}(\Delta, {\bf L})$ such that
\begin{equation}\label{modified-abreu}
- \sum_{i,j=1}^m \Big(\frac{1}{{f}^{2m-1}} {\bf H}^u_{ij}\Big)_{,ij} = \frac{c_{\Delta, {\bf L}, f}}{f^{2m+1}},
\end{equation}
where $c_{\Delta, {\bf L},f}$ is (necessarily) a constant determined by $(\Delta, {\bf L})$ (see \eqref{c} for a precise definition). Note that this is very similar,  and yet different (when $f$ is not constant) from the general problem of prescribing the value of $- \sum_{i,j=1}^m {\bf H}^u_{ij,ij}$ discussed in \cite{Do-02};  multiplying \eqref{modified-abreu} by the positive factor $f^{2m-1}$, we obtain an equation for $u$ which is of the type studied in \cite[Sect.~3.1]{ACGT4}. In the reminder of this section,  we briefly review the theory pioneered in \cite{Do-02}, which has been adapted to this class of equations in \cite{ACGT4}.

\smallskip
The standard Lebesgue measure $d\mu = d\mu_1\wedge \ldots \wedge d\mu_m$ on $\tor^* \cong \R^m$ and the affine labels ${\bf L}=(L_1, \ldots, L_d)$ of $\Delta$ induce a measure $d\sigma$ on each facet $F_i \subset \partial \Delta$ by letting
\begin{equation}\label{boundary-measure}
dL_i \wedge d\sigma = - d\mu.
\end{equation}
With this normalization, we have the following
\begin{lemma}\label{by-parts}  Let ${\bf H}$ be any smooth $S^2\tor^*$-valued function on $\Delta$ which satisfies the boundary condition \eqref{toricboundary} of Proposition~\ref{p:toric-ccs}, but not necessarily the positivity condition. Then, for any smooth functions $\varphi, \psi$ on $\tor^*$
\begin{equation*}
\int_{\Delta} \Big(\sum_{i,j=1}^m \Big(\psi {\bf H}_{ij}\Big)_{,ij}\Big)  \varphi \ d\mu =\int_{\Delta} \Big(\sum_{i,j=1}^m (\psi {\bf H}_{ij}) \varphi_{,ij} \Big)d\mu\\
-2\int_{\partial \Delta} \varphi \psi d\sigma .\end{equation*}
\end{lemma}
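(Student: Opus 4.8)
The plan is to prove this as a straightforward double integration by parts on the polytope $\Delta$, the only subtlety being the control of the boundary terms coming from $\partial\Delta$, where the boundary conditions \eqref{toricboundary} enter. I would start from the left-hand side $\int_\Delta\big(\sum_{i,j}(\psi{\bf H}_{ij})_{,ij}\big)\varphi\, d\mu$ and integrate by parts once in the index $i$, using the divergence theorem on $\Delta$ with respect to the flat structure on $\tor^*\cong\R^m$. This produces a bulk term $-\int_\Delta \sum_{i,j}(\psi{\bf H}_{ij})_{,j}\,\varphi_{,i}\,d\mu$ and a boundary term $\int_{\partial\Delta}\sum_{i,j}(\psi{\bf H}_{ij})_{,j}\,\varphi\,\nu_i\, d\sigma$, where $\nu$ is the (appropriately normalized) outward conormal to the facet; on the facet $F_k$ with inward normal $u_k=dL_k$, the relevant conormal direction is that of $dL_k$ itself, up to the normalization fixed by \eqref{boundary-measure}, namely $dL_k\wedge d\sigma=-d\mu$.

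Next I would integrate the bulk term by parts a second time, in the index $j$, to arrive at $\int_\Delta\sum_{i,j}(\psi{\bf H}_{ij})\varphi_{,ij}\,d\mu$ plus a second boundary contribution $-\int_{\partial\Delta}\sum_{i,j}(\psi{\bf H}_{ij})\,\varphi_{,i}\,\nu_j\,d\sigma$. So the identity reduces to showing that the two boundary integrals combine to exactly $-2\int_{\partial\Delta}\varphi\psi\,d\sigma$. On a fixed facet $F_k$, after contracting with the conormal $dL_k$, the first boundary term involves $\sum_j\big(\psi{\bf H}_{ij}\big)_{,j}\,(dL_k)_i = \psi\,\sum_j{\bf H}_{ij,j}(dL_k)_i + \big(\sum_{i}\psi_{,i}(dL_k)_i\big)\big(\sum_j{\bf H}_{ij}\big)$-type expressions; the key point is that the first boundary condition ${\bf H}_\xi(u_k,\cdot)=0$ kills every term in which ${\bf H}$ is contracted against $u_k=dL_k$ undifferentiated (this disposes of the second boundary term entirely, since it carries a bare ${\bf H}_{ij}\nu_j$, and of the piece of the first boundary term carrying $\psi_{,i}{\bf H}_{ij}(dL_k)_i$), while the second boundary condition $(d{\bf H})_\xi(u_k,u_k)=2u_k$ evaluates the surviving piece $\psi\,\sum_{i,j}{\bf H}_{ij,j}(dL_k)_i$ — more precisely $\psi\cdot(d{\bf H})(u_k,u_k)$ contracted suitably — to $2\psi$ times the factor $u_k$, which is absorbed into the normalization of $d\sigma$ via \eqref{boundary-measure}. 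Summing over facets gives the claimed $-2\int_{\partial\Delta}\varphi\psi\,d\sigma$.

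The one place requiring genuine care — and the step I expect to be the main obstacle — is bookkeeping the normalizations so that the factor "$2$" comes out correctly: one must be consistent about (i) the sign and normalization of the outward conormal relative to $dL_k$, (ii) the definition $dL_k\wedge d\sigma=-d\mu$, and (iii) the precise meaning of $(d{\bf H})_\xi(u_j,u_j)=2u_j$ as an $S^2\tor^*\otimes\tor$-valued identity when contracted into the divergence-theorem boundary integral. A clean way to organize this is to work facet-by-facet in adapted affine coordinates in which $L_k=\mu_1$ (so $u_k=\partial/\partial\mu_1$ and $d\sigma=-d\mu_2\wedge\cdots\wedge d\mu_m$), reducing both boundary conditions to the statements ${\bf H}_{1j}=0$ on $\{\mu_1=0\}$ and ${\bf H}_{11,1}=2$ there; then the boundary integrand collapses to $-2\psi\varphi$ on that facet and the identity falls out. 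I would also remark that the positivity condition of Proposition~\ref{p:toric-ccs} plays no role here — only smoothness and the boundary condition \eqref{toricboundary} are used — which is exactly why the lemma is stated in that generality, as it will later be applied to differences of symplectic potentials.
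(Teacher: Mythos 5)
Your proposal is correct and follows essentially the same route as the paper: two integrations by parts (the paper packages them as a single application of Stokes' theorem to the vector field $V_j=\varphi\sum_i(\psi{\bf H})_{ij,i}-\sum_i\varphi_{,i}(\psi{\bf H})_{ij}$), with the boundary terms evaluated facet by facet using ${\bf H}(dL_k,\cdot)=0$ and $(d{\bf H})(dL_k,dL_k)=2\,dL_k$ under the normalization $dL_k\wedge d\sigma=-d\mu$. The only point to make explicit, which your adapted-coordinate reduction does implicitly, is that the off-diagonal divergence terms $\sum_{j>1}{\bf H}_{1j,j}$ vanish because ${\bf H}(dL_k,\cdot)\equiv 0$ along the whole facet, so its tangential derivatives are zero — this is exactly how the paper obtains $\sum_i\langle d{\bf H}(dL_k,e_i),e_i^*\rangle=2$.
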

\begin{proof} The proof is elementary and uses integration by parts: recall that for any smooth $\tor^*$-valued function $V=(V_1, \ldots, V_m)$ on $\tor^*$, Stokes theorem gives
\begin{equation}\label{integration-by-parts}
\int_{\Delta} \sum_{j=1}^m V_{j,j} d\mu = -\sum_{k=1}^d \int_{F_k} \langle V, dL_k \rangle d\sigma,
\end{equation}
where we have used the convention \eqref{boundary-measure} for $d\sigma$.
Let $\tilde{\bf H}= \psi {\bf H}$. We shall use the identity
\begin{equation}\label{basic-formula}
\sum_{i,j=1}^m \varphi_{,ij} \tilde{\bf H}_{ij} = \sum_{i,j=1}^m \varphi \tilde{\bf H}_{ij,ij} - \sum_{j=1}^m V_{j,j} ,
\end{equation}
where
\begin{equation}
V_j:=\varphi \sum_{i=1}^m \tilde{\bf H}_{ij,i} - \sum_{i=1}^m \varphi_{,i}\tilde{\bf H}_{ij}.
\end{equation}
It follows by \eqref{basic-formula} and \eqref{integration-by-parts} that
\begin{equation}\label{stokes}
\int_{\Delta} \sum_{i,j=1}^m \varphi_{,ij} \tilde{\bf H}_{ij} d\mu = \int_{\Delta} \sum_{i,j=1}^m \varphi \tilde{\bf H}_{ij,ij} + \sum_{k=1}^{d}\int_{F_k}\langle V, dL_k\rangle d\sigma.
\end{equation}
On each facet $F_k$ we have, using a basis $\{e_1,\ldots e_m\}$ of $\tor$,
\begin{equation}\label{detailed}
\begin{split}
\langle dL_k, V\rangle  &= \sum_{j=1}^m \langle dL_k, e_j^*\rangle V_j \\
                                          &= \sum_{j=1}^m \langle dL_k, e_j^*\rangle (\varphi \sum_{i=1}^m \tilde{\bf H}_{ij,i} - \sum_{i=1}^m \varphi_{,i}\tilde{\bf H}_{ij})\\
                                          &= \varphi \sum_{i=1}^m \langle d \tilde{\bf H}(dL_k, e_i), e_i^*\rangle - \sum_{i=1}^m \tilde{\bf H}(dL_k, d\varphi).
\end{split}
\end{equation}
Using \eqref{toricboundary}, we have  $\tilde{\bf H}(dL_k, \cdot) =0$ on $F_k$ and $d \tilde{\bf H}(dL_k, e_i) = \psi d{\bf H}(dL_k, e_i)$. It thus follows (e.g. by choosing a basis $e_1 = dL_k, e_2, \ldots, e_m$ with $e_j^*$ tangent to $F_k$ for $j>1$) that
\begin{equation*}
\sum_{i=1}^m  \langle d \tilde{\bf H}(dL_k, e_i), e_i^*\rangle = \psi \sum_{i=1}^m  \langle d {\bf H}(dL_k, e_i), e_i^*\rangle =2\psi .
\end{equation*}
Substituting back in \eqref{detailed} and then the result in \eqref{stokes} completes the proof. \end{proof}
Applying the above lemma for $\psi = \frac{1}{f^{2m-1}}$ and ${\bf H}$ smooth on $\Delta$  and satisfying \eqref{toricboundary}, one has
\begin{equation}\label{special-case}
\begin{split}
-\int_{\Delta} \Big(\sum_{i,j=1}^m \Big(\frac{1}{{f}^{2m-1}} {\bf H}_{ij}\Big)_{,ij}\Big)  \varphi \ d\mu = &-\int_{\Delta} \frac{1}{f^{2m-1}}\Big(\sum_{i,j=1}^m {\bf H}_{ij} \varphi_{,ij} \Big)d\mu\\
& +2\int_{\partial \Delta} \Big(\frac{\varphi}{f^{2m-1}} \Big) d\sigma
\end{split}
\end{equation}
for any smooth function $\varphi$; specializing to $\varphi$ affine, one sees that the rhs is manifestly independent of ${\bf H}$, in accordance with Corollary~\ref{c:symplectic-futaki}.

In order to relate this to the Futaki invariant, consider the following quantity, for any constant $c$, any smooth ${\bf H}$ which satisfies the boundary conditions of Proposition~\ref{p:toric-ccs} (but is not necessarily positive definite or an inverse of a Hessian) and any smooth function $\varphi$:
\begin{equation}\label{Donaldson-Futaki-0}
\begin{split}
\mathfrak{F}_{\Delta, {\bf L}, f} (\varphi) : = & 2\int_{\partial \Delta} \Big(\frac{\varphi}{f^{2m-1}} \Big) d\sigma -c\,\int_{\Delta} \frac{\varphi}{f^{2m+1}}d\mu \\
                                                                 =&-\int_{\Delta} \Big(\frac{c}{f^{2m+1}} + \sum_{i,j=1}^m \Big(\frac{1}{{f}^{2m-1}} {\bf H}_{ij}\Big)_{,ij}\Big)  \varphi \ d\mu\\
                                                                    &  +\int_{\Delta} \frac{1}{f^{2m-1}}\Big(\sum_{i,j=1}^m {\bf H}_{ij} \varphi_{,ij} \Big)d\mu,
\end{split}
\end{equation}
where the second and third line follow from the first by \eqref{special-case}.
Now, in the special case where for some $u \in \mathcal{S}(\Delta, {\bf L})$, ${\bf H}={\bf H}^u$ {\em is} a positive
definite inverse of a Hessian while $c$ is the constant $c_{\omega,f}$ of Corollary \ref{c:symplectic-futaki}, it follows that $\mathfrak{F}_{\Delta, {\bf L}, f}$ equals $\frac{1}{(2\pi)^m}$ times the Futaki invariant, when restricted to functions $\varphi$ which are affine in momenta. This can be seen from the definition \eqref{F} together with \eqref{sg} and \eqref{special-case}.

Defining
\begin{equation}\label{c}
c_{\Delta, {\bf L}, f} : = 2\left(\frac{\int_{\partial \Delta} \frac{1}{f^{2m-1} }d\sigma}{\int_{\Delta} \frac{1}{f^{2m+1} }d\mu}\right),
\end{equation}
we now obtain the following  straightforward analogue of the results in \cite{Do-02}:
\begin{thm}\label{toric-futaki} If there exists a smooth $(m\times m)$-matrix valued function ${\bf H}$ defined on $\Delta$, which satisfies the boundary conditions \eqref{toricboundary} and the equation
\begin{equation}\label{matrix-abreu}
- \sum_{i,j=1}^m \Big(\frac{1}{{f}^{2m-1}} {\bf H}_{ij}\Big)_{,ij} = \frac{c}{f^{2m+1}},
\end{equation}
for a constant $c$, then $c$ is the positive constant $c_{\Delta, {\bf L}, f}$  given by \eqref{c}, and the Donaldson--Futaki invariant  defined by
\begin{equation}\label{Donaldson-Futaki-1}
\mathfrak{F}_{\Delta, {\bf L}, f} (\varphi) =  2\int_{\partial \Delta} \Big(\frac{\varphi}{f^{2m-1}} \Big) d\sigma -(c_{\Delta, {\bf L}, f}) \int_{\Delta} \frac{\varphi}{f^{2m+1}}d\mu
\end{equation} vanishes for any affine function $\varphi$. If, furthermore, ${\bf H}$ is positive definite on $\Delta^0$, in particular if ${\bf H}= {\bf H}^{u}$ for a solution $u\in \mathcal{S}(\Delta, {\bf L})$ of \eqref{modified-abreu}, then
\begin{equation}\label{K-stability}
\mathfrak{F}_{\Delta, {\bf L}, f} (\varphi)\ge 0
\end{equation}  for any smooth convex function $\varphi$ on $\Delta$,  with equality in \eqref{K-stability} if and only if $\varphi$ is affine.
\end{thm}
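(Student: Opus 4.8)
The plan is to deduce the whole statement from the integration-by-parts identity \eqref{special-case} (equivalently, Lemma~\ref{by-parts} applied with $\psi=1/f^{2m-1}$) combined with the hypothesis \eqref{matrix-abreu}; once those are in hand the theorem is essentially formal. Note that the first assertion uses only the boundary condition \eqref{toricboundary} on $\mathbf H$ (which is exactly what validates the integration by parts), while the positivity of $\mathbf H$ enters only in the second assertion.

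First I would pin down the constant $c$. Testing \eqref{special-case} against $\varphi\equiv 1$ and using \eqref{matrix-abreu} on the left-hand side kills the Hessian term on the right (since $\varphi_{,ij}\equiv 0$) and leaves
\[
c\int_{\Delta}\frac{1}{f^{2m+1}}\,d\mu\;=\;2\int_{\partial\Delta}\frac{1}{f^{2m-1}}\,d\sigma ,
\]
so $c=c_{\Delta,\mathbf L,f}$ by \eqref{c}; since $f>0$ on the compact polytope $\Delta$, both integrals are positive and hence $c_{\Delta,\mathbf L,f}>0$. Running the same computation with an arbitrary affine $\varphi$ (for which the Hessian term again vanishes) gives $2\int_{\partial\Delta}\frac{\varphi}{f^{2m-1}}\,d\sigma=c_{\Delta,\mathbf L,f}\int_{\Delta}\frac{\varphi}{f^{2m+1}}\,d\mu$, i.e. $\mathfrak F_{\Delta,\mathbf L,f}(\varphi)=0$ in view of \eqref{Donaldson-Futaki-1}.

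For the inequality I would start from \eqref{Donaldson-Futaki-0} with $c=c_{\Delta,\mathbf L,f}$: by \eqref{matrix-abreu} the coefficient $\frac{c}{f^{2m+1}}+\sum_{i,j}\big(\frac{1}{f^{2m-1}}\mathbf H_{ij}\big)_{,ij}$ vanishes identically on $\Delta$, so the first integral on the last two lines of \eqref{Donaldson-Futaki-0} drops out and
\[
\mathfrak F_{\Delta,\mathbf L,f}(\varphi)\;=\;\int_{\Delta}\frac{1}{f^{2m-1}}\Big(\sum_{i,j=1}^m \mathbf H_{ij}\,\varphi_{,ij}\Big)\,d\mu .
\]
(Here one uses that a smooth function on $\Delta$ extends smoothly to a neighbourhood so that \eqref{special-case}/\eqref{Donaldson-Futaki-0} applies, and that the integral converges because $\mathbf H$ is smooth on the compact $\Delta$ while $f$ is bounded away from zero there.) When $\mathbf H$ is positive definite on $\Delta^0$ — in particular when $\mathbf H=\mathbf H^u$ for a solution $u\in\mathcal S(\Delta,\mathbf L)$ of \eqref{modified-abreu} — the integrand is, at each point of $\Delta^0$, the pairing of the positive definite form $\mathbf H$ with the positive semidefinite Hessian $(\varphi_{,ij})$ of the convex function $\varphi$, times the positive factor $1/f^{2m-1}$; hence it is $\ge 0$ and \eqref{K-stability} follows.

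For the rigidity part, if $\mathfrak F_{\Delta,\mathbf L,f}(\varphi)=0$ then the non-negative integrand above vanishes on $\Delta^0$, and at each such point the vanishing of the pairing of a positive definite form with the positive semidefinite form $(\varphi_{,ij})$ forces $(\varphi_{,ij})\equiv 0$ on $\Delta^0$; by density and smoothness $\varphi$ is then affine on all of $\Delta$, and the converse is contained in the first part. I do not expect a genuine obstacle here: the argument is bookkeeping around \eqref{Donaldson-Futaki-0} and Lemma~\ref{by-parts}. The only points that require a little care are (i) the boundary degeneracy of $\mathbf H^u$, which forces the pointwise linear-algebra step and the "$\varphi$ affine" conclusion to be carried out on $\Delta^0$ and then propagated to $\Delta$ by continuity, and (ii) the convergence of the integrals, which follows from $f>0$ together with the smoothness of $\mathbf H$ up to the boundary.
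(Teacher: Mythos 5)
Your proposal is correct and follows essentially the same route as the paper: both deduce everything from Lemma~\ref{by-parts} via \eqref{special-case}/\eqref{Donaldson-Futaki-0}, pin down $c=c_{\Delta,{\bf L},f}$ by testing against $\varphi\equiv 1$, obtain $\mathfrak F_{\Delta,{\bf L},f}(\varphi)=\int_{\Delta}\frac{1}{f^{2m-1}}\sum_{i,j}{\bf H}_{ij}\varphi_{,ij}\,d\mu$, and conclude by the pointwise pairing of the positive definite ${\bf H}$ with the positive semidefinite Hessian of the convex $\varphi$. Your treatment of the equality case (trace of a positive definite against a positive semidefinite form vanishes only if the latter is zero, then propagate affineness from $\Delta^0$ to $\Delta$) is exactly the specialization the paper leaves implicit.
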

\begin{proof} By the second line of the general formula \eqref{Donaldson-Futaki-0} along with the first line, computed for $\varphi=1$, for any ${\bf H}$ which satisfies \eqref{matrix-abreu} and the boundary conditions \eqref{toricboundary} of Proposition~\ref{p:toric-ccs}, $c$ must be given by \eqref{c}. Furthermore, for such ${\bf H}$ and any smooth function $\varphi$, the Donaldson--Futaki invariant \eqref{Donaldson-Futaki-1} is given by
\begin{equation}\label{Futaki-Hessian}
\mathfrak{F}_{\Delta, {\bf L}, f} (\varphi) =\int_{\Delta} \frac{1}{f^{2m-1}}\Big(\sum_{i,j=1}^m {\bf H}_{ij} \varphi_{,ij} \Big)d\mu.
\end{equation}
Specializing the above identity to the case where ${\bf H}$ is positive definite and $\varphi$ is affine (resp. convex) yields the rest of the conclusions.\end{proof}
\begin{rem}
It follows from formula \eqref{c} that any compact, toric  conformally-K\"ahler, Einstein--Maxwell manifold or orbifold
must have {\it positive} scalar curvature.
\end{rem}
Formula \eqref{Donaldson-Futaki-1} makes sense not only for smooth functions $\varphi$, but  more generally  for continuous functions on $\Delta$. Following \cite{Do-02}, we introduce
\begin{defn}\label{stability} Let $(\Delta, {\bf L})$ be a labelled Delzant polytope and $f$ a positive affine function on $\Delta$. We say that $(\Delta, {\bf L}, f)$ is $K$-semistable if $\mathfrak{F}_{\Delta, {\bf L}, f} (\varphi)\ge 0$ for any piecewise affine linear (PL)  convex function $\varphi$ on $\Delta$. It is $K$-polystable if, furthermore, the equality $\mathfrak{F}_{\Delta, {\bf L}, f} (\varphi)= 0$ is only possible for $\varphi$ affine linear. \end{defn}
\noindent According to \cite[Conjecture 7.2.2]{Do-02}, the main conjecture in this theory is
\begin{conj}\label{c:donaldson} There exists a solution of \eqref{modified-abreu} in $\mathcal{S}(\Delta, {\bf L})$ if and only if $(\Delta, {\bf L}, f)$ is $K$-polystable.
\end{conj}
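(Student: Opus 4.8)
The plan is to treat the two implications separately, as they are of very different difficulty.

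\emph{Necessity} --- that the solvability of \eqref{modified-abreu} in $\mathcal{S}(\Delta,{\bf L})$ forces $K$-polystability of $(\Delta,{\bf L},f)$ --- is, up to an approximation step, already contained in Theorem~\ref{toric-futaki}. If $u\in\mathcal{S}(\Delta,{\bf L})$ solves \eqref{modified-abreu}, then ${\bf H}={\bf H}^u$ is positive definite on $\Delta^0$ and satisfies \eqref{matrix-abreu} with $c=c_{\Delta,{\bf L},f}$, so \eqref{K-stability} gives $\mathfrak{F}_{\Delta,{\bf L},f}(\varphi)\ge 0$ for every \emph{smooth} convex $\varphi$ on $\Delta$, with equality only when $\varphi$ is affine. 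To extend this to piecewise-affine convex functions I would approximate a given such $\varphi$ uniformly on the compact polytope $\Delta$ by smooth convex functions (mollify, then add a vanishing quadratic); since $f$ is bounded away from $0$ on $\Delta$, the functional $\mathfrak{F}_{\Delta,{\bf L},f}$ in the form \eqref{Donaldson-Futaki-1} is the integral of $\varphi$ against the fixed finite measures $f^{-(2m-1)}\,d\sigma$ on $\partial\Delta$ and $f^{-(2m+1)}\,d\mu$ on $\Delta$, hence continuous under uniform convergence, and the inequality passes to the limit. The strict inequality for a non-affine PL convex $\varphi$ is then extracted exactly as in \cite[\S5]{Do-02}, by localizing at a vertex of $\Delta$ to detect a crease of $\varphi$ and comparing against a smooth convex competitor; the positive weight plays no further role. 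I expect no genuine obstruction in this direction.

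\emph{Sufficiency} --- that $K$-polystability of $(\Delta,{\bf L},f)$ implies solvability of \eqref{modified-abreu} --- is the analytic heart of the statement, and is the exact analogue, for the weighted Abreu equation obtained from \eqref{modified-abreu} on multiplying by $f^{2m-1}$ (an equation of the type studied in \cite[\S3.1]{ACGT4}), of Donaldson's program for toric constant-scalar-curvature K\"ahler metrics. I would proceed variationally. On $\mathcal{S}(\Delta,{\bf L})$ introduce the functional
\[
\mathcal{F}_{\Delta,{\bf L},f}(u)=-\int_{\Delta}\log\det\bigl(\mathrm{Hess}(u)\bigr)\,\frac{1}{f^{2m-1}}\,d\mu+2\int_{\partial\Delta}\frac{u}{f^{2m-1}}\,d\sigma-c_{\Delta,{\bf L},f}\int_{\Delta}\frac{u}{f^{2m+1}}\,d\mu,
\]
not to be confused with the Donaldson--Futaki functional $\mathfrak{F}_{\Delta,{\bf L},f}$ of \eqref{Donaldson-Futaki-1}, which is precisely its linear part at infinity. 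Using the integration by parts of Lemma~\ref{by-parts} with $\psi=f^{-(2m-1)}$, the first variation of $\mathcal{F}_{\Delta,{\bf L},f}$ at $u$ in the direction $\dot u$ equals $\int_{\Delta}\bigl(-\sum_{i,j}(f^{-(2m-1)}{\bf H}^u_{ij})_{,ij}-c_{\Delta,{\bf L},f}\,f^{-(2m+1)}\bigr)\dot u\,d\mu$, so the critical points of $\mathcal{F}_{\Delta,{\bf L},f}$ are precisely the solutions of \eqref{modified-abreu}; moreover $\mathcal{F}_{\Delta,{\bf L},f}$ is convex along the affine segments $u_t=(1-t)u_0+tu_1$ of $\mathcal{S}(\Delta,{\bf L})$, because $\log\det$ is concave on positive definite matrices and $f^{-(2m-1)}>0$ while the remaining two terms are affine in $u$. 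The program is then: (i) show that $K$-polystability of $(\Delta,{\bf L},f)$ forces $\mathcal{F}_{\Delta,{\bf L},f}$ to be proper, i.e. coercive modulo affine functions, by adapting the combinatorial estimate of \cite[\S5]{Do-02}; (ii) use the direct method of the calculus of variations to produce a minimizer, a priori only a convex function on $\Delta$; (iii) establish interior and then boundary regularity of the minimizer, compatible with the first-order conditions \eqref{toricboundary}, so that it lies in $\mathcal{S}(\Delta,{\bf L})$ and solves \eqref{modified-abreu}. As an alternative to (ii) one could run a continuity method along a Donaldson-type path starting from a reference solution, using that the linearization of \eqref{modified-abreu} is a weighted fourth-order elliptic operator whose kernel and cokernel consist of the affine functions, and importing the a priori estimates of \cite[\S3.1]{ACGT4}.

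\emph{Main obstacle.} The serious difficulty is the one that leaves the analogous conjecture open for toric cscK metrics when $m\ge 3$: converting $K$-polystability, an inequality tested on piecewise-affine convex functions, into the global analytic coercivity of step (i), and controlling the degenerate fourth-order equation near $\partial\Delta$ in step (iii). Accordingly I can only confirm Conjecture~\ref{c:donaldson} outright in the setting of Section~\ref{s:examples}, where $(M,\omega,\T)$ is a compact symplectic toric $4$-orbifold with $b_2=2$: then $\Delta$ is a quadrilateral, $K$-polystability reduces --- as in the analysis of \cite{ambitoric2} --- to finitely many explicit inequalities on the values of $\mathfrak{F}_{\Delta,{\bf L},f}$, and \eqref{modified-abreu} can be solved by hand inside the ambitoric ansatz of \cite{ambitoric1}, which reduces the fourth-order PDE to ordinary differential equations whose solvability is dictated by exactly those inequalities. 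This is carried out in Theorem~\ref{ambitoric-EM-classification}, and is the only instance in which I presently know the conjecture to hold.
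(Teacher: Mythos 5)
You have correctly read the status of this statement: it is a conjecture, and the paper does not prove it either — it establishes only the ``only if'' direction (Theorem~\ref{toric-futaki} together with Corollary~\ref{c:K-stability}) and confirms the full equivalence in the quadrilateral case through Theorem~\ref{ambitoric-EM-classification}. Your proposal is therefore a program rather than a proof, and your own assessment of where it stops (coercivity of the weighted Mabuchi functional and boundary regularity of the degenerate fourth-order equation) coincides with the actual state of the art; note also that the functional you introduce for the sufficiency direction is exactly the $K$-energy \eqref{K-energy} of the paper, whose convexity is used there only to prove uniqueness (Theorem~\ref{uniqueness}), not existence.

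On the necessity direction your route differs from the paper's, and there is a genuine soft spot. Uniform approximation of a PL convex $\varphi$ by smooth convex functions, combined with \eqref{K-stability}, only yields $\mathfrak{F}_{\Delta,{\bf L},f}(\varphi)\ge 0$, i.e.\ $K$-semistability; strict positivity for non-affine PL convex functions does not pass to the limit, and your description of how to recover it (``localizing at a vertex and comparing against a smooth convex competitor'') is not an argument. The mechanism used both in \cite{ZZ} and in Corollary~\ref{c:K-stability} is different and more direct: for a simple crease function one splits $\Delta$ along the crease $F$, integrates by parts on each piece against $\tilde{\bf H}_{ij}=f^{-(2m-1)}{\bf H}^u_{ij}$, and the boundary terms on $\partial\Delta$ cancel against the linear part of $\mathfrak{F}_{\Delta,{\bf L},f}$, leaving the strictly positive crease contribution \eqref{Futaki-segment},
\begin{equation*}
\mathfrak{F}_{\Delta, {\bf L}, f}(\varphi)=\int_{F}\frac{1}{f^{2m-1}}\,{\bf H}^u(dL,dL)\,d\sigma>0,
\end{equation*}
which uses the positive definiteness of ${\bf H}^u$ on $\Delta^0$ in an essential way. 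You should replace your approximation-plus-limit step by this crease computation (and its routine extension to general PL convex functions); with that substitution the necessity half of your proposal matches the paper, while the sufficiency half remains, as you say, open beyond the $b_2=2$ ambitoric case.
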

\noindent As the smooth convex functions are dense in the space of  convex functions over $\Delta$, Theorem~\ref{toric-futaki}  already  shows that the existence of a solution of  \eqref{modified-abreu} implies $K$-semistability of $(\Delta, {\bf L}, f)$. The argument from \cite{ZZ} show that more is true.
\begin{cor}\label{c:K-stability} If \eqref{modified-abreu}  admits a solution $u \in \mathcal{S}(\Delta, {\bf L})$ then $(\Delta, {\bf L}, f)$ is K-polystable.
\end{cor}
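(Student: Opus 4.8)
We already know from Theorem~\ref{toric-futaki}, together with the density of smooth convex functions among piecewise affine linear (PL) convex functions on $\Delta$ and the continuity of $\mathfrak{F}_{\Delta, {\bf L}, f}$ for the uniform norm (its two terms are integrations against the finite measures $2f^{-(2m-1)}d\sigma$ on $\partial\Delta$ and $c_{\Delta, {\bf L}, f}f^{-(2m+1)}d\mu$ on $\Delta$), that $\mathfrak{F}_{\Delta, {\bf L}, f}(\varphi)\ge 0$ for every PL convex $\varphi$, i.e. that $(\Delta, {\bf L}, f)$ is $K$-semistable. The only remaining content of the Corollary is the equality case, and following \cite{ZZ} I would establish the \emph{strict} inequality $\mathfrak{F}_{\Delta, {\bf L}, f}(\varphi)>0$ for every PL convex $\varphi$ that is not affine.

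The starting point is formula~\eqref{Futaki-Hessian}: since $u\in\mathcal{S}(\Delta,{\bf L})$ solves \eqref{modified-abreu}, the matrix field ${\bf H}^u=({\rm Hess}\,u)^{-1}$ satisfies \eqref{matrix-abreu} and the boundary conditions \eqref{toricboundary}, hence for every smooth function $\psi$ on $\Delta$ one has $\mathfrak{F}_{\Delta, {\bf L}, f}(\psi)=\int_\Delta f^{-(2m-1)}\,{\rm tr}\big({\bf H}^u\,{\rm Hess}\,\psi\big)\,d\mu$. Given a PL convex $\varphi$ that is not affine, I would extend it to a PL convex function on $\R^m$ (the maximum of its finitely many affine pieces) and mollify, $\varphi_\varepsilon=\varphi*\rho_\varepsilon$, obtaining smooth convex functions converging to $\varphi$ uniformly on $\Delta$, so that $\mathfrak{F}_{\Delta, {\bf L}, f}(\varphi_\varepsilon)\to\mathfrak{F}_{\Delta, {\bf L}, f}(\varphi)$. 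Because ${\bf H}^u$ is positive semidefinite on $\Delta$ (the positivity condition in Proposition~\ref{p:toric-ccs}) and ${\rm Hess}\,\varphi_\varepsilon\ge 0$, the integrand above is nonnegative, so $\mathfrak{F}_{\Delta, {\bf L}, f}(\varphi_\varepsilon)\ge\int_K f^{-(2m-1)}\,{\rm tr}\big({\bf H}^u\,{\rm Hess}\,\varphi_\varepsilon\big)\,d\mu$ for any compact $K\subset\Delta$. Since $\varphi$ is not affine it is not affine on the connected dense set $\Delta^0$, hence the positive measure ${\rm tr}\,{\rm Hess}\,\varphi$ charges $\Delta^0$; I would take $K$ to be a small closed ball inside $\Delta^0$ meeting the support of this measure, with radius chosen (outside an at most countable set) so that ${\rm Hess}\,\varphi$ puts no mass on $\partial K$. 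On such a $K$ one has $f^{-(2m-1)}\ge c_K>0$ and ${\bf H}^u\ge\delta_K\,{\rm Id}$, so $\mathfrak{F}_{\Delta, {\bf L}, f}(\varphi_\varepsilon)\ge c_K\delta_K\int_K {\rm tr}\,{\rm Hess}\,\varphi_\varepsilon\,d\mu$, and by the weak-$*$ convergence ${\rm Hess}\,\varphi_\varepsilon\rightharpoonup{\rm Hess}\,\varphi$ the right-hand side tends to $c_K\delta_K\,({\rm tr}\,{\rm Hess}\,\varphi)(K)>0$. Letting $\varepsilon\to 0$ gives $\mathfrak{F}_{\Delta, {\bf L}, f}(\varphi)\ge c_K\delta_K\,({\rm tr}\,{\rm Hess}\,\varphi)(K)>0$, which is exactly what is needed.

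The purely routine points are the uniform-norm continuity of $\mathfrak{F}_{\Delta, {\bf L}, f}$, the fact that mollifications of a (suitably extended) PL convex function are smooth, convex and $C^0$-convergent with weakly-$*$ convergent Hessians, and the matrix inequality ${\rm tr}(AB)\ge\delta\,{\rm tr}\,B$ for $A\ge\delta\,{\rm Id}$ and $B\ge 0$. The step where genuine care is required — and the reason this goes beyond the trivial density argument that already gave semistability, as in \cite{ZZ} — is the behaviour near $\partial\Delta$, where ${\bf H}^u$ degenerates in the conormal directions of the facets (precisely by \eqref{toricboundary}), so that Hessian mass of the competitor sitting on $\partial\Delta$ in those directions would contribute nothing. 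The resolution is the elementary fact invoked above: a non-affine PL convex function on $\Delta$ necessarily has a genuine crease meeting the interior $\Delta^0$ (a crease contained in a facet hyperplane still leaves the function affine on $\Delta^0$), so one can always localise to a compact subset of $\Delta^0$ on which ${\bf H}^u$ is uniformly positive definite and extract a strictly positive lower bound there. An alternative closer to the original reasoning of \cite{ZZ} would first reduce, using $K$-semistability and the linearity of $\mathfrak{F}_{\Delta, {\bf L}, f}$, to elementary crease functions $\varphi=\max(\ell_1,\ell_2)$ and read off strict positivity from \eqref{Donaldson-Futaki-1}; I would prefer the Hessian-measure argument above as the more self-contained of the two.
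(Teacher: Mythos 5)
Your argument is correct, and it reaches the conclusion by a genuinely different route from the paper. The paper's proof also starts from the identity \eqref{Futaki-Hessian}, but it handles the equality case by a boundary computation: for a simple crease function $\varphi=\max(L,0)$ it integrates by parts over the sub-polytope $\Delta'$ cut out by the crease $F$ (using \eqref{stokes} and \eqref{detailed} with $\tilde{\bf H}=f^{-(2m-1)}{\bf H}^u$) and arrives at the explicit formula $\mathfrak{F}_{\Delta,{\bf L},f}(\varphi)=\int_F f^{-(2m-1)}{\bf H}^u(dL,dL)\,d\sigma>0$, remarking that the general PL case follows from a distributional analogue of \eqref{Futaki-Hessian} which it does not write out. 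You instead mollify the (suitably extended) PL convex competitor, apply \eqref{Futaki-Hessian} to the smooth approximations, and localize the Hessian measure to a compact ball in $\Delta^0$ where $f^{-(2m-1)}$ and ${\bf H}^u$ are uniformly bounded below; the weak-$*$ convergence of the mollified Hessians then yields a strictly positive lower bound for any non-affine PL convex $\varphi$. What your approach buys is a complete, self-contained treatment of the equality clause for \emph{all} PL convex functions at once, with the only delicate point (degeneration of ${\bf H}^u$ along $\partial\Delta$) handled by the interior localization; what the paper's approach buys is the explicit crease-integral formula \eqref{Futaki-segment}, which is exactly what gets reused in the proof of Theorem~\ref{ambitoric-EM-classification} (there ${\bf H}$ is only a formal, possibly non-positive solution, and the derivation of the crease formula does not use positivity, whereas your bound genuinely needs positive definiteness of ${\bf H}^u$). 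One small caveat: your closing remark that one could ``reduce, using semistability and linearity, to elementary crease functions'' is not an obvious reduction (a general PL convex function is not an affine function plus a sum of simple creases), but since you do not rely on it, this does not affect the validity of your proof.
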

\begin{proof} The proof is elementary and uses integration by parts (similar to the proof of Lemma~\ref{by-parts}) in order to obtain a distribution analogue of \eqref{Futaki-Hessian} in the case when $\varphi$ is a PL convex function.  For simplicity, and as we shall later use this case, we  only check that if \eqref{modified-abreu} admits a solution $u \in \mathcal{S}(\Delta, {\bf L})$ then $\mathfrak{F}_{\Delta, {\bf L}, f}$ is strictly positive for a {\it simple crease} convex PL function $\varphi$, i.e. $\varphi = {\rm max}(L, \tilde{L})$ where $L$ and $\tilde{L}$  are affine linear functions on $\tor^*$  with $L- \tilde{L}$ vanishing in the interior of $\Delta$; as $\mathfrak{F}_{\Delta, {\bf L}, f}(\tilde{L})=0$ by Theorem~\ref{toric-futaki}, we can assume without loss of generality that $\tilde L \equiv 0$.

Denote by $F= \Delta \cap \{L=0\}$ the `crease'  of $\varphi$. This introduces a partition $\Delta= \Delta' \cup \Delta''$ of $\Delta$ into $2$ sub-polytopes  with $F$ being a common facet of the two. Notice that $\varphi$ is affine linear over each $\Delta'$ and $\Delta''$, being zero over $\Delta''$, say. Furthermore, $dL$ defines an inward normal for $\Delta'$ (by convexity) and we use \eqref{boundary-measure} to define a measure $d\sigma$ on $\partial \Delta'$.  Let us write $\partial \Delta' = F \cup \partial \Delta_0$, where $\partial \Delta_0$ is the union of facets of $\Delta'$ which belong to $\partial \Delta$. We then have (using that $\varphi \equiv 0$ on $\Delta''$)
\begin{equation*}
\begin{split}
\mathfrak{F}_{\Delta, {\bf L}, f} (\varphi) = &\, 2\int_{\partial \Delta} \Big(\frac{\varphi}{f^{2m-1}} \Big) d\sigma -(c_{\Delta, {\bf L}, f}) \int_{\Delta} \frac{\varphi}{f^{2m+1}}d\mu \\
= &\, 2\int_{\partial \Delta_0} \Big(\frac{\varphi}{f^{2m-1}} \Big) d\sigma - (c_{\Delta, {\bf L}, f}) \int_{\Delta'} \frac{\varphi}{f^{2m+1}}d\mu.
  \end{split}
 \end{equation*}
We now use \eqref{stokes}  and \eqref{detailed} over $\Delta'$ with $\tilde{\bf H}_{ij} = \frac{1}{f^{2m-1}} {\bf H}^{u}_{ij}$, where $u\in \mathcal{S}(\Delta, {\bf L})$ is a solution of \eqref{modified-abreu}, i.e,  $-\sum_{ij} \tilde{\bf H}_{ij,ij}= \frac{c_{\Delta, {\bf L}, f}}{f^{2m+1}}$.  Noting that $\varphi$ is affine over $\Delta'$, i.e. $\varphi_{,ij}=0$ we have
\begin{equation*}
\begin{split}
- (c_{\Delta, {\bf L}, f})\int_{\Delta'} \frac{\varphi }{f^{2m+1}} d\mu = &\int_{\Delta'} \varphi \Big(\sum_{i,j=1}^m \tilde{\bf H}_{ij,ij}\Big) d\mu \\
                                                                                                                     = &-2\int_{\partial \Delta_0}\frac{\varphi}{f^{2m-1}}  d\sigma\\
                                                                                                                       &- \int_{F}\Big(\varphi \sum_{i=1}^m \langle d \tilde{\bf H}(dL, e_i), e_i^*\rangle\Big) d\sigma \\
                                                                                                                    & + \int_{F} \tilde{\bf H}(dL, dL) d\sigma.
\end{split}
\end{equation*}
As $\varphi$ vanishes on $F$, the term at the third line is zero, so that
\begin{equation}\label{Futaki-segment}
\mathfrak{F}_{\Delta, {\bf L}, f} (\varphi) =  \int_{F} \frac{1}{f^{2m-1}} {\bf H}(dL, dL) d\sigma >0,
\end{equation}
as ${\bf H}= {\bf H}^{u}$ is positive definite over $\Delta^0$. \end{proof}

\smallskip
We end this section by showing the uniqueness of the solution of \eqref{modified-abreu}, adapting an argument originally due to D. Guan~\cite{Guan} in the extremal toric case. Consider the functional
\begin{equation}\label{K-energy}
\mathfrak{E}_{\Delta, {\bf L}, f}(u) = \mathfrak{F}_{\Delta, {\bf L}, f}(u)  - \int_{\Delta} \frac{\log \det \Hess (u)}{f^{2m-1}} d\mu,
\end{equation}
defined for elements  $u \in \mathcal{S}(\Delta, {\bf L})$ and which we shall refer to as the {\it $K$-energy} of $(\Delta, {\bf L}, f)$. Donaldson shows~\cite{Do-02} that the convexity of $u$ implies that the second term on the rhs of the above formula is a well-defined integral taking values in $(-\infty, \infty]$. But one can alternatively introduce a reference element $u_c \in \mathcal{S}(\Delta, {\bf L})$ and consider  instead the functional
\begin{equation*}
u \mapsto \mathfrak{F}_{\Delta, {\bf L}, f}(u)  - \int_{\Delta} \frac{(\log \det \Hess (u)- \log \det \Hess (u_c))}{f^{2m-1}} d\mu
\end{equation*}
which is finite by \cite[Thm.~2]{abreu}. Using the formula $d \log \det {\bf A} = \trace {\bf A}^{-1} d {\bf A}$ for any non-degenerate matrix ${\bf A}$ and \eqref{Donaldson-Futaki-0}, one computes the first variation of  $\mathfrak{E}_{\Delta, {\bf L}, f}$ at $u$ in the direction of $\dot u$
\begin{equation*}
\begin{split}
\Big(d \mathfrak{E}_{\Delta, {\bf L}, f}\Big)_{u} (\dot{u})=&  \mathfrak{F}_{\Delta, {\bf L}, f}(\dot{u}) -\int_{\Delta} \frac{1}{f^{2m-1}}\sum_{i,j=1}^m {\bf H}^{u}_{ij} \dot{u}_{,ij} d\mu \\
                                                                               =& \int_{\Delta}\Big[\Big( -\sum_{i,j=1}^m \Big(\frac{1}{{f}^{2m-1}} {\bf H}_{ij}\Big)_{,ij}\Big) - \frac{c_{\Delta,{\bf L}, f}}{f^{2m+1}}\Big]  \dot{u} \ d\mu,
\end{split}
\end{equation*}
showing that the critical points  of $\mathfrak{E}_{\Delta, {\bf L}, f}$ are precisely the solutions of \eqref{modified-abreu}. Furthermore, using $d{\bf A}^{-1} = -{\bf A}^{-1} d{\bf A} {\bf A}^{-1}$, the second variation of $\mathfrak{E}_{\Delta, {\bf L}, f}$ at $u$ in the directions of $\dot{u}$ and $\dot{v}$ is
\begin{equation*}
\Big(d^2 \mathfrak{E}_{\Delta, {\bf L}, f}\Big)_{u}(\dot{u}, \dot{v}) =\int_{\Delta} \frac{1}{f^{2m-1}} \trace \Big(\Hess(u) \Hess(\dot{u}) \Hess(u) \Hess(\dot{v})\Big) d\mu,
\end{equation*}
showing $\mathfrak{E}_{\Delta, {\bf L}, f}$ is convex. In fact, as $\Hess(u)$ is positive definite and $\Hess(\dot{u})$ is symmetric, the vanishing  $\Big(d^2 \mathfrak{E}_{\Delta, {\bf L}, f}\Big)_{u}(\dot{u}, \dot{u}) = 0$ is equivalent to $\dot{u}$
being affine.

One can show (e.g. by using \cite[Thm.~2]{abreu}) that for any two elements $u_1, u_2 \in \mathcal{S}(\Delta, {\bf L})$, $u(t) = tu_1 + (1-t)u_2, \ t \in [0,1]$ is curve in $\mathcal{S}(\Delta, {\bf L})$ with tangent vector $\dot{u}= u_1-u_2$. It follows by the convexity of $\mathfrak{E}_{\Delta, {\bf L}, f}$ that if $u_1$ and $u_2$ are two solutions of \eqref{modified-abreu} (equivalently, critical points of $\mathfrak{E}_{\Delta, {\bf L}, f}$), then $u_1-u_2$ must be affine. We thus have proved the following
\begin{thm}\label{uniqueness} Any two solutions $u_1, u_2\in \mathcal{S}(\Delta, {\bf L})$ of \eqref{modified-abreu} differ by an affine function. In particular, on a compact toric K\"ahler manifold or orbifold $(M, \omega, \T)$, for any  fixed positive affine function  in momenta $f$, there exists at most one, up to a $\T$-equivariant isometry, $\omega$-compatible $\T$-invariant K\"ahler metric  $g_J \in \mathcal{C}_{\omega}^{\T}$ for which $g_{J,f} = \frac{1}{f^2} g_J$ is a conformally K\"ahler, Einstein--Maxwell metric.
\end{thm}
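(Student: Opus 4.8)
The plan is to adapt D.~Guan's argument \cite{Guan} from the extremal toric setting: exhibit a convex functional on $\mathcal{S}(\Delta, {\bf L})$ whose critical points are exactly the solutions of \eqref{modified-abreu}, and show that strict convexity along affine segments forces two solutions to differ by an affine function; the geometric statement then follows from the standard dictionary between $\mathcal{C}_\omega^\T$ and $\mathcal{S}(\Delta, {\bf L})$.

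First I would introduce the \emph{$K$-energy}
\begin{equation*}
\mathfrak{E}_{\Delta, {\bf L}, f}(u) = \mathfrak{F}_{\Delta, {\bf L}, f}(u) - \int_{\Delta} \frac{\log\det\Hess(u)}{f^{2m-1}}\, d\mu,
\end{equation*}
where $\mathfrak{F}_{\Delta, {\bf L}, f}$ is as in \eqref{Donaldson-Futaki-0}; to sidestep convergence issues at $\partial\Delta$, one should really work with the version of $\mathfrak{E}_{\Delta, {\bf L}, f}$ relative to a fixed reference potential $u_c \in \mathcal{S}(\Delta, {\bf L})$, which is a finite integral by \cite[Thm.~2]{abreu}. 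Using $d\log\det{\bf A} = \trace({\bf A}^{-1} d{\bf A})$ and the last line of \eqref{Donaldson-Futaki-0}, I would compute the first variation and, after the integration by parts of Lemma~\ref{by-parts}, identify
\begin{equation*}
\big(d\mathfrak{E}_{\Delta, {\bf L}, f}\big)_u(\dot u) = \int_{\Delta} \Big[ -\sum_{i,j=1}^m \Big(\tfrac{1}{f^{2m-1}}\, {\bf H}^u_{ij}\Big)_{,ij} - \tfrac{c_{\Delta, {\bf L}, f}}{f^{2m+1}} \Big]\, \dot u\, d\mu,
\end{equation*}
so that the critical points of $\mathfrak{E}_{\Delta, {\bf L}, f}$ are precisely the solutions of \eqref{modified-abreu}.

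Next, differentiating once more with $d{\bf A}^{-1} = -{\bf A}^{-1}(d{\bf A}){\bf A}^{-1}$ gives
\begin{equation*}
\big(d^2\mathfrak{E}_{\Delta, {\bf L}, f}\big)_u(\dot u, \dot v) = \int_{\Delta} \frac{1}{f^{2m-1}}\, \trace\big(\Hess(u)\,\Hess(\dot u)\,\Hess(u)\,\Hess(\dot v)\big)\, d\mu.
\end{equation*}
Since $\Hess(u)$ is positive definite and $f>0$ on $\Delta$, the associated quadratic form in $\dot u$ is non-negative and vanishes only when $\Hess(\dot u)\equiv 0$, i.e.\ $\dot u$ affine. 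Given two solutions $u_1, u_2$, the affine segment $u(t) = t u_1 + (1-t) u_2$ lies in $\mathcal{S}(\Delta, {\bf L})$ for all $t \in [0,1]$ (again by \cite[Thm.~2]{abreu}), with constant tangent vector $\dot u = u_1 - u_2$; since $t\mapsto \mathfrak{E}_{\Delta, {\bf L}, f}(u(t))$ is convex with vanishing derivative at both endpoints, it is constant, its second derivative vanishes identically, and hence $u_1 - u_2$ is affine. Finally, for the metric statement I would use that $\mathcal{C}_\omega^\T$ is identified, up to $\T$-equivariant isometry, with $\mathcal{S}(\Delta, {\bf L})$, and that by \eqref{sg}--\eqref{modified-abreu} the metric $g_{J,f} = \tfrac{1}{f^2}g_J$ is Einstein--Maxwell precisely when the symplectic potential of $J$ solves \eqref{modified-abreu}; potentials differing by an affine function correspond to the same $J$ up to a $\T$-equivariant isometry.

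The main obstacle is not the formal computation of the two variations, which is routine, but the analytic input needed to make $\mathfrak{E}_{\Delta, {\bf L}, f}$ a legitimate object: one must know that $\log\det\Hess(u) - \log\det\Hess(u_c)$ is integrable on $\Delta$ and that the affine segment $u(t)$ stays inside $\mathcal{S}(\Delta, {\bf L})$ --- i.e.\ that ${\bf H}^{u(t)} = \Hess(u(t))^{-1}$ keeps satisfying the boundary conditions of Proposition~\ref{p:toric-ccs} for every $t$. Both are supplied by Abreu's regularity theorem \cite[Thm.~2]{abreu}, and with that in hand the rest of the argument is a direct transcription of Guan's proof.
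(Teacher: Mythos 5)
Your proposal is correct and follows essentially the same route as the paper: the same $K$-energy $\mathfrak{E}_{\Delta, {\bf L}, f}$ (taken relative to a reference potential $u_c$ to ensure finiteness via \cite[Thm.~2]{abreu}), the same first and second variation computations identifying critical points with solutions of \eqref{modified-abreu} and establishing convexity, and the same linear-segment argument showing two solutions differ by an affine function, with the geometric statement deduced from the standard correspondence between $\mathcal{S}(\Delta,{\bf L})$ and $\mathcal{C}^{\T}_{\omega}$. No gaps to report.
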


\section{Einstein--Maxwell toric 4-orbifolds with $b_2 \le 2$}\label{s:examples}

 \subsection{Weighted projective planes}\label{s:wpp} Recall that (see e.g. \cite{abreu,ACGT2}) any compact toric symplectic $2m$-orbifold whose momentum image $\Delta$ is a simplex in $\R^m$ is (orbifold) covered by a weighted projective space $\C P^{m}_{a_0, \ldots, a_m}$, where $a_0\ge a_1 \ge \cdots \ge  a_m>0$ are integers  with ${\rm g.c.d.}(a_0, \ldots, a_m)=1$.  Note that, as the second Betti number  $b_2$ of such an orbifold equals $1$, the symplectic form is unique up to homothety and symplectomorphism.  Furthermore, as pointed out by R. Bryant~\cite{Bryant}, each weighted projective space admits an  extremal K\"ahler metric $g_{BF}$, which is also Bochner--flat~\cite{Bryant,Webster}. In complex dimension $m=2$, the Bochner tensor of a K\"ahler orbi-surface coincides with its anti-self-dual Weyl tensor. It thus follows that the extremal K\"ahler  metric of $\C P^2_{a_0,a_1,a_3}$ is self-dual, in particular has vanishing Bach tensor~\cite{De}.  A. Derdzinski~\cite{De}  showed that
the metric $\tilde g=\frac{1}{s_K^2} g_{BF}$ is then Einstein (and therefore conformally K\"ahler,  Einstein--Maxwell) whenever the scalar curvature $s_{BF}\neq 0$. It is shown in \cite{Bryant,DG} that this latter condition holds everywhere  on  $\C P^{m}_{a_0, a_1, a_2}$ if and only if $a_0<a_1 + a_2$.  We summarize these known results in the following
\begin{prop} \label{wpp} Each weighted projective plane $\C P^2_{a_0, a_1, a_2}$  with weights satisfying $a_0 \ge a_1 \ge a_2$ and $a_0 < a_1+a_2$ admits a conformally K\"ahler, Einstein--Maxwell metric $\tilde g$ which is Einstein. The corresponding conformal K\"ahler metric $g_{BF}= f^2\tilde g$ is the extremal Bochner-flat metric of $\C P^2_{a_0, a_1, a_2}$ and the Killing potential $f$ is a positive multiple of the scalar curvature $s_{BF}$ of $g_{BF}$.
\end{prop}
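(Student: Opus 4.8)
The plan is to assemble Proposition~\ref{wpp} from the classical facts recalled just above its statement, the only new ingredient being that these combine to produce a \emph{globally defined} metric precisely under the stated arithmetic condition on the weights. First I would fix, on the toric symplectic orbifold $\C P^2_{a_0,a_1,a_2}$ (whose symplectic form is unique up to scale and equivariant symplectomorphism, since $b_2=1$), the Bochner--flat Kähler metric $g_{BF}$ provided by Bryant~\cite{Bryant} and Webster~\cite{Webster}; recall that a Bochner--flat Kähler metric has holomorphic scalar-curvature gradient, so $g_{BF}$ is in particular extremal.

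Next I would use the four-dimensional coincidence that the Bochner tensor of a Kähler orbi-surface equals its anti-self-dual Weyl tensor (in the complex orientation): thus $g_{BF}$ is a self-dual Kähler metric, and self-dual metrics are Bach-flat. I would then invoke Derdzinski's theorem~\cite{De}: for a Bach-flat (in particular self-dual) Kähler $4$-manifold or orbifold $(M,J,g)$, the conformal metric $\tilde g = s_g^{-2}\,g$ is Einstein on the open set $\{s_g\ne 0\}$. The orbifold statement is automatic here, as Derdzinski's computation is pointwise and tensorial and does not see the orbifold points. Applied to $g_{BF}$ this yields an Einstein metric $\tilde g = s_{BF}^{-2}\,g_{BF}$ defined over $\{s_{BF}\ne 0\}\subseteq\C P^2_{a_0,a_1,a_2}$.

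It remains to make this global, and this is exactly where the hypothesis $a_0<a_1+a_2$ enters. I would quote the explicit description of $g_{BF}$ and the resulting formula for $s_{BF}$ in \cite{Bryant,DG}, from which one reads off that, with $a_0\ge a_1\ge a_2$, the scalar curvature $s_{BF}$ is nowhere zero on $\C P^2_{a_0,a_1,a_2}$ if and only if $a_0<a_1+a_2$, and that in this case it is in fact everywhere positive (consistently with the Remark following Theorem~\ref{toric-futaki}). Under this hypothesis $\{s_{BF}\ne 0\}$ is all of $\C P^2_{a_0,a_1,a_2}$, so $\tilde g = s_{BF}^{-2}g_{BF}$ is a globally defined Einstein metric; being Einstein in dimension four it has constant scalar curvature, so \eqref{scal} holds, while its Ricci tensor is a multiple of $\tilde g$ and hence of type $(1,1)$ for $J$ (as $\tilde g$, being conformal to the Kähler metric $g_{BF}$, is Hermitian), so \eqref{ric} holds. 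Setting $f := s_{BF}$, or any positive multiple, $f$ is a positive smooth function and $f^2\tilde g = g_{BF}$ up to homothety, hence Kähler; thus $\tilde g$ is conformally Kähler, Einstein--Maxwell, conformal to the extremal Bochner--flat metric $g_{BF}$, with conformal factor a positive multiple of $s_{BF}$, as claimed.

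The argument has no obstacle internal to the present paper: the analytic core — writing down the Bochner--flat metric and, above all, computing $s_{BF}$ sharply enough to pin down its zero set, which is how the inequality $a_0<a_1+a_2$ arises — is carried out in \cite{Bryant,DG}, and the conformal-rescaling-to-Einstein step is Derdzinski's~\cite{De}. The only points requiring (routine) attention are the verification that these results, stated for manifolds, carry over to orbifolds and to the complex orientation, and the sign-and-constant bookkeeping identifying $f$ with a positive multiple of $s_{BF}$.
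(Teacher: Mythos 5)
Your argument is correct and follows essentially the same route as the paper, which states Proposition~\ref{wpp} precisely as a summary of the facts recalled just before it: Bryant's Bochner--flat extremal metric, the four-dimensional identification of the Bochner tensor with the anti-self-dual Weyl tensor (hence self-duality and Bach-flatness), Derdzinski's conformal rescaling $\tilde g=s_{BF}^{-2}g_{BF}$ to an Einstein metric where $s_{BF}\neq 0$, and the criterion from \cite{Bryant,DG} that $s_{BF}$ is nowhere vanishing (indeed positive) exactly when $a_0<a_1+a_2$. Your additional remarks on the orbifold validity of Derdzinski's pointwise computation and on the positivity of $f=s_{BF}$ only make explicit what the paper leaves implicit.
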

We now point out that the converse is also true, thus providing a uniqueness statement for these spaces.
\begin{thm}\label{wpp-classification} Any conformally K\"ahler,  Einstein--Maxwell metric $\tilde g$ on a weighted projective plane $\C P^2_{a_0, a_1, a_2}$  must be given by Proposition~\ref{wpp}. In particular,  if $a_0 \ge a_1 \ge a_2$ and $a_0 \ge a_1+a_2$, then $\C P^2_{a_0, a_1, a_2}$ does not admit any conformally K\"ahler,  Einstein--Maxwell metric, whereas if $a_0 \ge a_1 \ge a_2$ and $a_0 < a_1+a_2$ it admits a unique such metric up to isometry and homothety.
\end{thm}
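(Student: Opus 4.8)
The plan is to prove first, using the topology of $M=\C P^2_{a_0,a_1,a_2}$, that every conformally K\"ahler, Einstein--Maxwell metric on $M$ is automatically Einstein, and then to identify it by the rigidity of extremal K\"ahler metrics. Write $\tilde g=\frac{1}{f^2}g$ with $g$ K\"ahler. A weighted projective plane has positive definite intersection form of rank one on $H^2(M;\R)$, so $b_2(M)=b^+(M)=1$ and $b^-(M)=0$. As recalled in Section~\ref{s:preliminaries}, for a conformally K\"ahler, Einstein--Maxwell metric the primitive $(1,1)$-form $\frac{1}{f^2}\rho_0^{\tilde g}$ is co-closed (because $s_{\tilde g}$ is constant) and, being anti-self-dual, it is therefore harmonic with respect to $\tilde g$. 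By (orbifold) Hodge theory the space of such forms has dimension $b^-(M)=0$, hence $\rho_0^{\tilde g}\equiv 0$ and, by \eqref{EM}, $\mathrm{Ric}^{\tilde g}_0=0$; thus $\tilde g$ is Einstein.

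Next I would identify the K\"ahler metric $g=f^2\tilde g$. By the characterization of Einstein, conformally K\"ahler metrics on connected $4$-manifolds (Derdzinski~\cite{De}; see also Section~\ref{s:conformally-einstein}), $g$ is extremal, $s_g$ is nowhere zero, and $f$ is a constant multiple of $s_g$. Since $b_2(M)=1$, the metric $g$ lies, up to scale, in the unique K\"ahler class of $M$. By Calabi's theorem~\cite{calabi} an extremal K\"ahler metric is invariant under a maximal compact subgroup of $\mathrm{Aut}_r(M,J)$, which, $M$ being toric, contains a $2$-torus; hence $g$, and with it $\tilde g$ and $f$, is toric. Uniqueness of toric extremal K\"ahler metrics in a fixed K\"ahler class (Guan~\cite{Guan}; alternatively the convexity argument of Theorem~\ref{uniqueness} applied to the extremal analogue of \eqref{modified-abreu}) then identifies $g$, up to scaling and a $\T$-equivariant isometry, with the Bochner--flat extremal metric $g_{BF}$ of Proposition~\ref{wpp}.

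It then follows that $\tilde g=\frac{1}{(c\,s_g)^2}g$ is a homothety of $\frac{1}{s_{BF}^2}g_{BF}$, i.e.\ the metric of Proposition~\ref{wpp}; this is a smooth, globally defined metric precisely when $s_{BF}>0$ everywhere on $M$, which by~\cite{Bryant,DG} happens exactly when $a_0<a_1+a_2$. In that range Proposition~\ref{wpp} supplies the metric and the argument above shows it is unique up to isometry and homothety, while for $a_0\ge a_1+a_2$ the vanishing of $s_{BF}$ somewhere rules out any such metric. The one genuinely new ingredient is the observation $b^-(M)=0$, which forces the Einstein property for free; the remainder is bookkeeping with citable rigidity results. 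The step needing the most care is the middle one: one must check that Derdzinski's classification and the uniqueness of (toric) extremal K\"ahler metrics apply verbatim for the orbifold $M$ --- alternatively one may quote the ambitoric classification of~\cite{ambitoric2} for triangular polytopes --- and one must separately dispatch the degenerate case $f\equiv\mathrm{const}$, in which $\tilde g$ is K\"ahler--Einstein and the same rigidity forces all the weights equal and $\tilde g$ to be Fubini--Study, again consistently with Proposition~\ref{wpp}.
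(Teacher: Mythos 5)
Your proposal is correct and follows essentially the same route as the paper's own proof: use $b_2^-(M)=0$ to force the harmonic anti-self-dual form $\frac{1}{f^2}\rho_0^{\tilde g}$ to vanish (hence $\tilde g$ Einstein), invoke Derdzinski's characterization to get $g$ extremal with $f$ a multiple of $s_g$, identify $g$ with the Bochner--flat metric $g_{BF}$ via uniqueness of (toric) extremal metrics, and read off the weight condition $a_0<a_1+a_2$ from the positivity of $s_{BF}$. Your extra remarks (Calabi invariance to justify toricity, the degenerate case $f\equiv\mathrm{const}$) are harmless elaborations of steps the paper treats implicitly.
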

\begin{proof} Let $\tilde g= \frac{1}{f^2}g$ be a conformally K\"ahler, Einstein--Maxwell metric on $(M,J)=\C P^2_{a_0, a_1, a_2}$. As pointed out in Section~\ref{s:preliminaries}, when $m=2$ the $2$-form $\psi= \frac{1}{f^2}\rho_0^{\tilde g}$ is anti-self-dual and co-closed (and therefore harmonic). As $b_2^-(M)=0$, it follows that $\psi \equiv 0$, i.e. ${\rm Ric}^{\tilde g}_0\equiv 0$. Thus, $\tilde g$ must be Einstein.  As proved in \cite{De}, the K\"ahler metric $g$ must be then extremal and $f = \lambda s_g$. By the uniqueness (modulo isometry) of the extremal (toric) metrics in their K\"ahler class~\cite{Guan}, we can assume that $g= g_{BF}$ is the  Bochner--flat metric on $(M,J)$ (see \cite{Bryant,Webster}),  and $f=s_{BF}$. As $f>0$ by assumption, $g_{BF}$ must have positive scalar curvature, i.e. the weights must satisfy (see e.g. \cite{Bryant,DG})  $a_0 \ge a_1 \ge a_2$ and $a_0 < a_1+a_2$.\end{proof}

\begin{rem}\label{r:wpp}
In terms of the theory developed in Section~\ref{s:toric}, $\C P^2_{a_0, a_1, a_2}$  is a symplectic toric orbifold, which up to a homothety of the symplectic form, is described  by  the standard simplex $\Delta \subset \R^2$ and labels ${\bf L}^{\bf a}=\{L_0=a_1a_2(1-\mu_1-\mu_2), L_1=a_0a_2 \mu_1, L_2=a_0a_1 \mu_2\}$, see \cite{abreu,ACGT2}.  Theorem~\ref{wpp-classification} tells us that \eqref{modified-abreu}  admits a solution for some positive affine function $f$ if and only if  the {\it extremal affine function} $\varepsilon_{(\Delta, {\bf L}^{\bf a})}$ of $(\Delta, {\bf L}^{\bf a})$ (see \cite{Do-02,legendre} for a definition) is everywhere positive,   and $f$ is a positive multiple of $\varepsilon_{(\Delta,{\bf L}^{\bf a})}$.
\end{rem}

\subsection{Toric orbifolds with $b_2=2$} We consider here the case of a compact toric symplectic $4$-orbifold  $(M, \omega, \T)$ for which the associated labelled Delzant polytope $(\Delta, {\bf L})$ is a convex compact quadrilateral in $\R^2$. Equivalently, we require that the second Betti number of $M$ equals $2$.  The existence of an extremal K\"ahler metric on such an orbifold has been solved (explicitly) in \cite{ambitoric2}, following ideas from \cite{legendre} (where, in particular,  the constant scalar curvature case has been classified). We shall follow \cite{ambitoric2} closely in order to establish Conjecture~\ref{c:donaldson},  demonstrate an explicit  method for deciding whether or not a $(\Delta, {\bf L}, f)$  is $K$-polystable,  and, in the latter case, construct explicitly the toric conformally-K\"ahler, Einstein--Maxwell metric associated to $(\Delta, {\bf L}, f)$.  For the convenience of the reader, we decided to provide a brief review of the methods  of \cite{ambitoric1,ambitoric2},  sending the reader to these references for  further details.

\subsubsection{Ambitoric structures}\label{s:ambitoric} We start by a  brief review of the local construction of toric metrics for $m=2$,  via the ambitoric ansatz of \cite{ambitoric1}.
\begin{defn} An \emph{ambik\"ahler structure} on a real $4$-manifold or
orbifold $M$ consists of a pair of K\"ahler metrics $(g_+, J_+, \omega_+)$ and
$(g_-, J_-, \omega_-)$ such that
\begin{bulletlist}
\item $g_+$ and $g_-$ induce the same conformal structure (i.e., $g_- =
\varphi^2g_+$ for a positive function $\varphi$ on $M$);
\item $J_+$ and $J_-$ have opposite orientations (equivalently the
volume elements $\frac1{2}\omega_+\wedge\omega_+$ and
$\frac1{2}\omega_-\wedge\omega_-$ on $M$ have opposite signs).
\end{bulletlist}
The structure is said to be \emph{ambitoric} if in addition
\begin{bulletlist}
\item there is a $2$-dimensional subspace $\tor$ of vector fields on $M$,
linearly independent on a dense open set, whose elements are hamiltonian and
Poisson-commuting Killing vector fields with respect to both $(g_+, \omega_+)$
and $(g_-, \omega_-)$.
\end{bulletlist}
\end{defn}
Thus $M$ has a pair of conformally equivalent but oppositely oriented K\"ahler
metrics, invariant under a local $2$-torus action, and both locally toric with
respect to that action.
There are three classes of examples of ambitoric structures.
\subsubsection{Toric products}\label{prod} Let $(\Sigma_1,g_1,J_1,\omega_1)$ and
$(\Sigma_2,g_2,J_2,\omega_2)$ be (locally) toric complex $1$-dimensional K\"ahler
manifolds or orbifolds, with hamiltonian Killing vector fields $K_1$ and
$K_2$. Then $M=\Sigma_1\times \Sigma_2$ is obviously ambitoric, with $g_\pm:=g=g_1\oplus
g_2$, $J_\pm=J_1\oplus (\pm J_2)$, $\omega_\pm=\omega_1\oplus (\pm\omega_2)$
and $\tor$ spanned by $K_1$ and $K_2$.  Writing the metrics of the toric Riemannian surfaces $(\Sigma_1,g_1)$ and  $(\Sigma_2, g_2)$ as
$$g_1 =  \frac{dx^2}{A(x)} + A(x)dt_1^2; \ \ g_2=\frac{dy^2}{B(y)} + B(y) dt_2^2,$$
for positive functions $A,B$ of one variable, and momentum/angular  coordinates
$$\mu_1=x, \ \mu_2=y,  \ t_1,  \ t_2,$$
the metric $g$ becomes
\begin{equation}\label{toric-product}
g = \frac{dx^2}{A(x)} +\frac{dy^2}{B(y)} +  A(x)dt_1^2 + B(y) dt_2^2,
\end{equation}
so that it is of the form \eqref{toricmetric} with corresponding matrix valued function ${\bf H} ={\bf H}^{A,B}$ given by
\begin{equation}\label{solution-parallelogram}
{\bf H}^{A,B} ={\rm diag} (A(x), B(y)).
\end{equation}
The scalar curvature of $g$  is $s=-(A''(x)+ B''(y)) = s_{g_1} + s_{g_2}$, showing that extremal metrics are given by taking $A$
and $B$ to be polynomials of degrees $\le 3$ (i.e. each $g_1$ and $g_2$ be extremal),  and CSC ones correspond to taking  $A$ and $B$ be polynomials of degrees $\le 2$ (i.e. $g_1$ and $g_2$ are CSC).

For a given (positive) affine function $f(x)= f_1x + f_0$ of $x$, say, the condition that the scalar curvature $s_{\tilde g}$ of $\tilde g =\frac{1}{f^2}g$ is  a constant $c$,  has been analyzed in \cite{LeB0}. In our notation,  one has by \eqref{sg} and \eqref{solution-parallelogram}
\begin{equation*}
\begin{split}
\frac{s_{\tilde g}}{f^5(x)}  &= -\Big(\frac{A(x)}{f^3(x)}\Big)_{xx} - \Big(\frac{B(y)}{f^3(x)}\Big)_{yy} \\
                      &=- \frac{1}{f^5(x)}\Big((A(x), f^2(x))^{(2)} - B''(y)f^2(x)\Big),
                      \end{split}
\end{equation*}
where
\begin{equation}\label{transvectant}
(G(x), R(x))^{(2)}: = R''(x) G(x) -3 R'(x)G'(x) + 6 R(x)G''(x).
\end{equation}
computes the transvectant of order $2$ when $R(x)$ is polynomial of degree $\le 4$ and $G(x)$ is a polynomial of degree  $\le 2$, see \cite[App.~A]{ambitoric1}. It follows that $s_{\tilde g}$ is  constant  iff and only if $A(x)= \sum_{k=0}^4 a_k x^k$  is a polynomial of degree $\le 4$, $B(y)= \sum_{k=0}^{2} b_k y^k$ is a polynomial of degree $\le 2$,  linked through the relation
\begin{equation}\label{EM-parallelogram}
 -(A(x), f^2(x))^{(2)}  = -2b_2f^2(x) +c.
 \end{equation}
 Note that in the case when $f$ is constant we obtain ${\rm deg}(A) \le 2$ (i.e. the metric $g$ is CSC) whereas in the case when $f(x) = f_1x + f_0$ with $f_1\neq 0$, up to an affine transformation of the variable $x$ and rescaling of $f$,  we can assume  $f(x)=x$. Then,  \eqref{EM-parallelogram} is equivalent to the constraints  (compare with \cite{LeB0}):
 \begin{equation}\label{EM-parallelogram-normal}
 a_2 =-b_2, \ \ a_1=0.
 \end{equation}

\subsubsection{Toric Calabi-type metrics}\label{s:calabi} Let $(\Sigma,g_\Sigma,J,\omega_\Sigma)$ be a locally toric
complex $1$-dimensional K\"ahler manifold,  with a hamiltonian Killing vector
field $V$ and  momentum $y$.  Let $\pi\colon P\to \Sigma$ be a (local) $\R$-bundle with connection
$\theta$ and curvature $d\theta=\pi^*\omega_{\Sigma}$, and  $A(x)$ be a positive
function defined on an open interval $(\al_\1,\al_\2) \subset \R^+$. Then on $M=P\times I$,  the metric
\begin{align}\label{calabi-type}
g &= x g_{\Sigma}+\frac{xdx^2}{A(x)}+ \frac{A(x)}{x}\theta^2, \ d\theta = \omega_{\Sigma}\\
\omega &= x \omega_{\Sigma}  + dx\wedge\theta,\qquad
J (x dx) =  A(x) \theta,
\end{align}
is K\"ahler, and the generator $K$ of the $\R$
action on $P$ along with the lift $\tilde V = V^H + y K$ of the hamiltonian Killing field of $(\Sigma, g_{\Sigma}, \omega_{\Sigma})$ to $M$ define a local toric structure. It is not hard to see that $(g_+=g, \omega_+=\omega)$  is ambitoric with associated conformal K\"ahler metric $g_- = \frac{1}{x^2} g_+, \omega_-= \frac{1}{x} \omega_{\Sigma} - \frac{1}{x^{2}} dx\wedge\theta$. It follows that $g_-$ is again given by \eqref{calabi-type}, by  replacing $x$ with $\bar x = \frac{1}{x}$ and  the function $A(x)$ with $A^*(\bar x)=\bar{x}^4 A(1/\bar{x})$.
Writing the toric metric $(g_{\Sigma}, \omega_{\Sigma})$ in momentum/angle coordinates with respect to $V$ as
\begin{equation}\label{FS}
g_{\Sigma}= \frac{{dy}^2}{B(y)} + B(y) dt_2^2, \ \omega_{\Sigma}  = dy \wedge dt_2,
\end{equation}
for a positive function $B(y)$,  the K\"ahler metric $(g, \omega)$ becomes (see \cite{legendre})
\begin{equation}\label{calabi-type-toric}
\begin{split}
g &= x \frac{dx^2}{A(x)} + x \frac{dy^2}{B(y)} + \frac{A(x)}{x}(dt_1 + y dt_2)^2 + x B(y) dt_2^2 \\
\omega & = dx \wedge dt_1 + d(xy) \wedge dt_2,
\end{split}
\end{equation}
where
\begin{equation}\label{calabi-moments}
(\mu_1, \mu_2)=(x, xy)
\end{equation}
are the momentum coordinates and $(t_1,t_2)$ are angular coordinates of $(g,\omega)$ with respect to $(K, \tilde V)$. The corresponding matrix valued function ${\bf H} = {\bf H}^{A,B}$ is then
\begin{equation}\label{solution-trapezoid}
{\bf H}^{A,B} = \frac{1}{x}\Big(\begin{array}{cc} A(x) & yA(x) \\  yA(x) & x^2B(y) + y^2A(x) \end{array} \Big),
\end{equation}
The ansatz \eqref{calabi-type} has been used in many places to construct explicit examples of extremal K\"ahler metrics.  We follow here the notation in \cite{legendre}. One can compute the scalar curvature $s_{\tilde g}$ of $\tilde g= \frac{1}{f^2} g$ from \eqref{sg} and \eqref{solution-trapezoid}, by using
\begin{equation*}
\frac{\partial}{\partial \mu_1} = \frac{\partial}{\partial x} - \frac{y}{x} \frac{\partial}{\partial y}, \ \ \frac{\partial}{\partial \mu_2} = \frac{1}{x} \frac{\partial}{\partial y}.
\end{equation*}
Let us consider the special case when $f(x)=f_0 + x f_1$ is an everywhere positive function depending only on $x$.  We then get
\begin{equation*}
\begin{split}
\frac{s_{\tilde g} }{f^5} &= -\Big(\frac{A(x)}{xf^3}\Big)_{xx} - 2\Big(\frac{A(x)}{x^2 f^3}\Big)_x - \frac{B''(y)}{xf^3} - \frac{2A(x)}{x^3f^3}
\end{split}
\end{equation*}
The case $f=f_0=const$ is analyzed in \cite{legendre}: the metric $\tilde g$ is cscK and this is equivalent to $B(y)= b_2y^2 + b_1y  + b_0$ being a polynomial of degree $\le 2$ (i.e. $(\Sigma,g_\Sigma)$ having constant Gauss curvature $s_{\Sigma} = -2b_2$) and $A(x)= \sum_{k=0}^4 a_k x^k$ being  a polynomial of degree $\le 3$ (so $a_4=0$) with coefficient $a_2 = -b_2$. In this case $s_{\tilde g} = -\frac{6a_3}{f_0}.$

We now assume (without loss) that $f(x)= x + \al$ with $x + \al >0$. Then,   see \cite{LeB}, $\tilde g$ has constant scalar curvature  if and only if  $B(y)= b_2y^2 + b_1y  + b_0$ is a polynomial of degree $\le 2$ and $A(x-\al)=\sum_{k=0}^4 a_k x^k$  is a polynomial of degree $\le 4$,  such that
\begin{equation}\label{EM-calabi-normal}
a_2=-b_2, \ 2a_0= \alpha a_1.
\end{equation}

\subsubsection{Regular ambitoric structures}\label{s:regular} Let $q(z)=q_2z^2+2q_1 z+ q_0$ be a
quadratic polynomial and $M$ a $4$-dimensional manifold (or orbifold)
with real-valued functions $(x,y,\tau_0,\tau_1,\tau_2)$ such that $x>y$,
$2q_1\tau_1=q_2\tau_2+q_0\tau_0$, and such that $dx, dy, d\tau_0, d\tau_1, d\tau_2$ span the
cotangent space of $M$. Let $\tor$ be the $2$-dimensional space of vector fields $K$
on $M$ with $dx(K)=0=dy(K)$ and $d\tau_j(K)$ constant, and let $A$ and $B$ be
functions on open neighbourhoods of the images of $x$ and $y$ in $\R$.

Then, on a suitable open sub-manifold $M^0 \subset M$,   we define an ambitoric metric with respect to $\tor$ by
\begin{align} \label{g-pm}
g_\pm =  \biggl(\frac{x-y}{q(x,y)}\biggr)^{\pm1}
\biggl(& \frac{dx^2}{A(x)} + \frac{dy^2}{B(y)}  + A(x) \Bigl(\frac{y^2 d\tau_0 + 2y d\tau_1 + d\tau_2}{(x-y)q(x,y)}\Bigr)^2 \\ \nonumber
& + B(y) \Bigl(\frac{x^2 d\tau_0 + 2x d\tau_1 + d\tau_2}{(x-y)q(x,y)}\Bigr)^2
\biggr),\\
\label{omega-pm}
\omega_\pm = \biggl(\frac{x-y}{q(x,y)}\biggr)^{\pm 1}
& \frac{dx\wedge (y^2 d\tau_0 + 2y d\tau_1 + d\tau_2)
\pm dy \wedge (x^2 d\tau_0 + 2x d\tau_1 + d\tau_2)}{(x-y)q(x,y)},\\ \nonumber
J_\pm dx= A(x)&\frac{y^2 d\tau_0 + 2y d\tau_1 + d\tau_2}{(x-y)q(x,y)}, \quad
J_\pm dy= \pm B(y)\frac{x^2 d\tau_0 + 2x d\tau_1 + d\tau_2}{(x-y)q(x,y)},
\end{align}
where, for any polynomial $q(z)=q_2z^2 + 2q_1z + q_0$ of degree $\le 2$, we let $q(x,y):=q_2xy+q_1(x+y)+q_0$, and the functions $A(x)$ and $B(y)$  are assumed positive on $M^0$.

Denote by $S^2$ the $3$-dimensional vector space of polynomials of degree $\le 2$. The discriminant, viewed as a natural quadratic form on $S^2$, gives rise to an inner product $\langle \cdot, \cdot \rangle$.  The space of Killing fields of $g_{\pm}$  for  the torus is naturally isomorphic to the space
\begin{equation*}
S^2_{0,q} =\{p(z) \in S^2 :  \langle p, q \rangle =2p_1 q_1 -(q_2p_0 + q_0p_2)=0.\}
\end{equation*}
The space $S^2_{0,q}$ in turn  is isomorphic to the quotient space $S^2 /\langle q \rangle$ using  the map $\frac{1}{2} {\rm ad}_q$, where
\begin{equation*}
{\rm ad}_q(w)=\{q,w\}= q'w - w'q
\end{equation*} is the Poisson bracket on $S^2$. Thus, if $(p_1,p_2)$ is a basis of $S^2_{q,0}$ and $(w_1,w_2)$ the corresponding basis of $S^2 /\langle q \rangle$ (with $p_i = \frac{1}{2}\{q, w_i\}$), momentum/angular coordinates for $g_{\pm}$ are given by
\begin{equation}\label{moments}
\begin{split}
\mu_i^+ &= w_i(x,y)/q(x,y), \ t_i, \  (i=1,2) \\
\mu_i^- & = p_i(x,y)/(x-y), \ t_i, \  (i=1,2).
\end{split}
\end{equation}
It is straightforward to compute that the toric forms \eqref{toricmetric} of the metrics $g_{\pm}$ are given by matrix-valued smooth functions  ${\mathbf H}^{A,B}_{\pm}$, respectively,
\begin{equation}\label{solution-regular}
\begin{split}
{\mathbf H}^{A,B}_{-}(p_i,p_j)&=\frac{A(x) p_i(y)  p_j(y) + B(y) p_i(x) p_j(x)}{(x-y)^3\, q(x, y) },\\
{\mathbf H}^{A,B}_{+}(p_i,p_j)&=\frac{A(x) p_i(y) p_j(y) + B(y) p_i(x) p_j(x)}{(x-y)\, q(x, y)^3 }.
\end{split}
\end{equation}

As shown in \cite[Sect.~5.2]{ambitoric1},  the metric $g=g_+$ is extremal iff $g_-$ is extremal  iff
\begin{equation}\label{regular-extremal}\begin{split}
A(z)&=q(z)\pi(z)+P(z),\\
B(z)&=q(z)\pi(z)-P(z),\\
\end{split}\end{equation}
where $\pi(z)=\pi_2z^2 + 2\pi_1z + \pi_0$ is a polynomial of degree at most two satisfying  $\langle \pi, q \rangle=0$,  and $P(z)$ is
polynomial of degree at most four.  In this case,  the scalar curvature is given by
\begin{equation}\label{ambitoric-s}
s_g =  -\frac{\{q, (q, P)^{(2)}\}(x,y)}{q(x,y)},
\end{equation}
where $(q, P)^{(2)} (x)$ is the polynomial of degree $\le 2$ given by \eqref{transvectant} and, we recall, $\{\cdot, \cdot \}$ denotes the Poisson bracket of elements of $S^2$.

Similarly,  if $p(z)=p_2z^2 + 2p_1z + p_0$ is a polynomial of degree $\le 2$ such that  $\langle q, p \rangle=0$ and $p(x,y)>0$ on $M_0$,  it is shown in \cite[Sect.~5.2]{ambitoric1} that  the metric $\tilde g= \Big(\frac{q(x,y)}{p(x,y)}\Big)^2 g_+$ is conformally-K\"ahler,  Einstein--Maxwell provided that
\begin{equation}\label{regular-EM}\begin{split}
&A(z)=p(z)\rho(z)+R(z),\\
& B(z) =p(z)\rho(z)-R(z),\\
& \langle (p, R)^{(2)}, q \rangle =0
\end{split}\end{equation}
where $\rho(z)=\rho_2z^2 + 2\rho_1z + \rho_0$ is a polynomial of degree at most two, satisfying $\langle \rho, p \rangle =0$,  and $R(z)$ is
polynomial of degree at most $4$.

\begin{rem}\label{r:ambitoric-scalar}
It is straightforward to check that if $g=g_+$ is given by \eqref{g-pm}  for some polynomials $q(z)$, $A(z)$ and $B(z)$ such that $A(z)$ and $B(z)$ satisfy the first two conditions of \eqref{regular-EM} for some polynomials $p(z)$ and $\rho(z)$ of degree $\le 2$, with $\langle p, \rho \rangle =0$,  and some polynomial $R(z)$ of degree $\le 4$, then  the scalar curvature $s_{\tilde g}$ of $\tilde g= \big(\frac{q(x,y}{p(x,y)}\big)^2 g$  is equal to
\begin{equation}\label{ambitoric-sg}
s_{\tilde g} =  -\frac{\{p, (p, R)^{(2)}\}(x,y)}{q(x,y)}.
\end{equation}
This is,  in general,  an affine function in momenta, i.e. the ansatz produces an $f$-extremal metric in the terminology of Sect.~\ref{s:extremal}, where $f$ is the affine function in momenta given in the $(x,y)$-coordinates by  $\frac{p(x,y)}{q(x,y)}$.  If we assume, moreover, that $\langle p, q \rangle =0$ and $\langle (p, R)^{(2)}, q \rangle =0$, elementary algebra shows that $\{p, (p, R)^{(2)}\}= c q$ for some constant $c$, or equivalently, $s_{\tilde g}=c$ is a constant.
\end{rem}

\begin{defn}\label{rough} A (local) K\"ahler metric $(g, \omega)$  will be called ambitoric of {\it product}, {\it Calabi } or {\it postive/negative regular} type  if it is given by \eqref{toric-product}, \eqref{calabi-type-toric}, or one of the metrics $(g_\pm, \omega_{\pm})$ defined by \eqref{g-pm} with respect to some $q(z)$, respectively,  for some positive functions $A(x)$, $B(y)$. We will respectively refer to the corresponding examples of extremal K\"ahler metrics (resp. conformal Einstein--Maxwell metrics $\tilde g = \frac{1}{f^2} g$) as {\it ambitoric extremal} (resp. {\it ambitoric Einstein--Maxwell}) metrics of product, Calabi or positive/negative regular type.
\end{defn}

The local discussion above leads to the following useful observation.

\begin{prop}\label{duality}There is a bijective correspondence between ambitoric Einstein--Maxwell metrics $\tilde g= \frac{1}{f^2} g$ of  positive regular type,  associated to a positive Killing potential $f(x,y)$ on one side,  and ambitoric extremal K\"ahler metrics  $\bar g$  of positive regular type, of positive scalar curvature $s_{\bar g}(\bar x,\bar y)$ on the other, such that $f(x,y) = 1/s_{\bar g}(x,y)$.  Furthermore,  if  $q(z), A(z), B(z)$ are the polynomials defining $g$, and $p(z)$ is the quadric polynomial defining the Killing potential $f =p(x,y)/q(x,y)$, then $\bar g$ has corresponding polynomials $\bar q (z)= p(z), \bar A(z)=A(z), \bar{B}(z)=B(z)$ whereas $q(x,y)= s_{\bar g}(x,y) \bar{q}(x,y)$. In particular, $g= \bar g$  if and only if $f=const$, i.e. $\tilde g$ is a cscK regular ambitoric metric.
\end{prop}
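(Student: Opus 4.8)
The plan is to read off the asserted correspondence directly from the structural normal forms for regular ambitoric metrics, namely \eqref{regular-extremal} for the extremal ansatz and \eqref{regular-EM} for the Einstein--Maxwell ansatz, using the observation that both are governed by the \emph{same} pair of one-variable polynomials $A$, $B$, while the quadratic $q$ that enters the conformal factor $\big(\tfrac{x-y}{q(x,y)}\big)^{\pm 1}$ of \eqref{g-pm} and the quadratic $p$ that enters the Killing potential $f=p(x,y)/q(x,y)$ interchange roles. Concretely: if $\tilde g=\tfrac{1}{f^2}g$ is an ambitoric Einstein--Maxwell metric of positive regular type, with $g=g_+$ defined by $q,A,B$ and $f$ by $p$ as in \eqref{regular-EM}, then $A=p\rho+R$ and $B=p\rho-R$ for a quadratic $\rho$ with $\langle\rho,p\rangle=0$ and a polynomial $R$ of degree $\le 4$, and moreover $\langle p,q\rangle=0$ and $\langle(p,R)^{(2)},q\rangle=0$. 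I would then define $\bar g:=\bar g_+$ to be the regular ambitoric metric attached to the quadratic $\bar q:=p$ and the same functions $\bar A:=A$, $\bar B:=B$. Since $\bar A=\bar q\rho+R$, $\bar B=\bar q\rho-R$ and $\langle\rho,\bar q\rangle=0$, condition \eqref{regular-extremal} holds with $\pi:=\rho$, $P:=R$, so $\bar g$ is an ambitoric extremal K\"ahler metric of positive regular type, with polynomials $\bar q(z)=p(z)$, $\bar A(z)=A(z)$, $\bar B(z)=B(z)$.

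For the scalar curvature identity I would combine \eqref{ambitoric-s} applied to $\bar g$, which gives $s_{\bar g}=-\{\bar q,(\bar q,\bar P)^{(2)}\}(x,y)/\bar q(x,y)=-\{p,(p,R)^{(2)}\}(x,y)/p(x,y)$, with \eqref{ambitoric-sg} of Remark~\ref{r:ambitoric-scalar}, which gives $s_{\tilde g}=-\{p,(p,R)^{(2)}\}(x,y)/q(x,y)$; dividing yields $s_{\bar g}(x,y)=\tfrac{q(x,y)}{p(x,y)}\,s_{\tilde g}=s_{\tilde g}/f(x,y)$. As $\tilde g$ is Einstein--Maxwell, $s_{\tilde g}$ is a constant, and this relation together with $f>0$, $s_{\bar g}>0$ forces it to be positive. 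Up to the evident overall homothety (equivalently, rescaling $q$ and the angular coordinates), we may and do pin the scale of $q$ by $q=-\{p,(p,R)^{(2)}\}$, i.e. normalize $s_{\tilde g}=1$; with this normalization $s_{\bar g}=1/f$ and $q(x,y)=s_{\bar g}(x,y)\,\bar q(x,y)$. Note that Remark~\ref{r:ambitoric-scalar} is also what shows $\{p,(p,R)^{(2)}\}$ is a nonzero multiple of $q$, so that $q$ is genuinely recovered from $p$ and $R$.

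For the converse, hence for bijectivity, I would start from an ambitoric extremal K\"ahler metric $\bar g=\bar g_+$ of positive regular type with data $\bar q,\bar A,\bar B$, $\bar\pi$, $\bar P$ as in \eqref{regular-extremal} and with $s_{\bar g}>0$, and set $p:=\bar q$, $\rho:=\bar\pi$, $R:=\bar P$, $A:=\bar A$, $B:=\bar B$, $q:=-\{p,(p,R)^{(2)}\}$. Two checks are needed. First, $q$ is a polynomial of degree $\le 2$: the order-two transvectant \eqref{transvectant} maps $S^2\times\{\deg\le 4\}$ into $S^2$, and the Poisson bracket of two elements of $S^2$ again lies in $S^2$. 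Second, $\langle q,p\rangle=0$ and $\langle q,(p,R)^{(2)}\rangle=0$, which is automatic from the ad-invariance of the discriminant form on $S^2$ under the Poisson bracket (the standard $\mathfrak{sl}(2,\R)$-structure on $S^2$, for which $\langle\{a,b\},a\rangle=\langle\{a,b\},b\rangle=0$). Moreover $q(x,y)=-\{p,(p,R)^{(2)}\}(x,y)=s_{\bar g}(x,y)\,p(x,y)>0$ wherever $p(x,y)>0$, so on a suitable open set $M^0$ the metric $g:=g_+$ with data $q,A,B$ is a genuine ambitoric K\"ahler metric, $f:=p(x,y)/q(x,y)=1/s_{\bar g}(x,y)$ is a positive Killing potential, and \eqref{regular-EM} certifies that $\tilde g:=\tfrac{1}{f^2}g$ is conformally K\"ahler, Einstein--Maxwell. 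These two assignments are mutually inverse, which gives the bijection. Finally, since $\bar q=p$ and $q(x,y)=s_{\bar g}(x,y)\,\bar q(x,y)$, the defining data of $g$ and $\bar g$ coincide --- equivalently $g=\bar g$ --- exactly when $s_{\bar g}\equiv 1$, i.e. $f=1/s_{\bar g}\equiv 1$, i.e. $f$ is constant; in that case $\tilde g=\tfrac{1}{f^2}g$ is a homothety of the K\"ahler metric $g$, hence cscK, and conversely a cscK $\tilde g$, being a K\"ahler metric conformal to $g$, must be a constant multiple of it.

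The computations are routine once the dictionary $(q,A,B;p,\rho,R)\leftrightarrow(\bar q=p,\bar A=A,\bar B=B;\bar\pi=\rho,\bar P=R)$ is set up; the two points that need a little attention are the bookkeeping of the scale normalization that upgrades $s_{\bar g}=s_{\tilde g}/f$ to $s_{\bar g}=1/f$, and --- in the converse direction --- checking that the quadratic $q$ manufactured from $p$ and $R$ automatically obeys the orthogonality relations appearing in \eqref{regular-EM}, which is precisely where the $\mathfrak{sl}(2,\R)$-invariance of the discriminant form does the work.
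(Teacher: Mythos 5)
Your argument is correct and coincides with what the paper leaves implicit: Proposition~\ref{duality} is stated there without a separate proof, as a direct consequence of \eqref{regular-extremal}, \eqref{regular-EM}, \eqref{ambitoric-s} and Remark~\ref{r:ambitoric-scalar}, via exactly the dictionary $(\bar q,\bar A,\bar B;\bar\pi,\bar P)=(p,A,B;\rho,R)$ you set up, with the orthogonality relations for the manufactured $q=-\{p,(p,R)^{(2)}\}$ supplied by the invariance of the discriminant form under the Poisson bracket. Your explicit bookkeeping of the homothety normalization $s_{\tilde g}=1$ (needed to upgrade $s_{\bar g}=s_{\tilde g}/f$ to $f=1/s_{\bar g}$ and $q=s_{\bar g}\,\bar q$) addresses a point the paper's statement glosses over and is a clarification, not a deviation.
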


\subsubsection{Ambitoric compactifications}\label{s:compactification}
We now recall from the theory in \cite{ambitoric2} how the local considerations in the previous subsections lead to ambitoric
examples on compact toric orbifolds.

The main observation is than in each case (product, Calabi, or regular) any line $x= \al$ (resp. $y=\be$)  in the $(x,y)$-coordinates transforms to an affine line $\ell_{\al}=0$ (resp. $\ell_{\be}=0$) in the momentum $(\mu_1, \mu_2)$-coordinates. Indeed, this is obvious in the product case when we take $\ell_{\al} = \al -\mu_1$ (resp. $\ell_{\be}= \be -\mu_2$) and in the Calabi case if we have $\ell_{\alpha} = -\al \mu_1 +\al^2$ (resp. $\ell_{\be} = \be \mu_1 - \mu_2$), whereas in the regular case  we use the identification between $\tor$ and $S^2_{q,0}$ to write $\ell^+_{\alpha}=(x-\alpha)(y-\alpha)/q(x,y)=0$ (resp. $\ell^+_{\beta}=(x-\beta)(y-\beta)/q(x,y)$) in the $(\mu_1^+, \mu_2^+)$-plane, and similarly, $\ell^-_{\alpha}=(x-\alpha)q(y, \alpha)/(x-y)$ (resp. $\ell_{\beta}^-=(y-\beta)q(x,\beta)/(x-y)$)  in the $(\mu_1^-, \mu_2^-)$-plane.

In each case, we denote by $p_{\al}=d\ell_{\alpha}$ and $p_{\be}=d\ell_\be$ the corresponding normals: in the product case we have $p_{\al}=(-1,0), p_{\be}=(0,-1)$, in the Calabi case $p_{\al} =(-\al, 0), p_{\be}=(\be, -1)$ whereas in the regular case
\begin{equation}\label{normals}
p^{\pm}_{\alpha}(z) = q(\alpha, z)(z-\alpha) ; \ \ p^{\pm}_{\be} (z) = q(z, \be)(z-\be),
\end{equation}
viewed as elements of $S^2_{0,q}$.

Using the freedom in the choice of coordinates $(x,y)$ (in the product and Calabi cases $(x,y)$ can be chosen up to a common additive factor whereas in the regular case $(x,y)$ are determined up to the action of ${\rm SL}(2, \R)$), we can assume without loss  that $x\in [\al_\1, \al_\2]$, $y\in [\be_\1, \be_\2]$ where  $0<\be_\1<\be_\2<\al_\1<\al_\2$ and  $q(x,y)>0$ on $[\al_\1, \al_\2] \times [\be_\1, \be_\2]$ in the regular case. This data then determine a labelled quadrilateral $(\Delta, {\bf L})$ with  labelling ${\bf L}= (\frac{1}{r_{\al, \1}}\ell_{\al_\1}, \frac{1}{r_{\al,\2}}\ell_{\al_\2}, \frac{1}{r_{\be, \1}}\ell_{\be_\1}, \frac{1}{r_{\be, \2}}\ell_{\be_\2})$, where the real numbers  $r_{\al,k}, r_{\be, k}$ must satisfy
\begin{equation*}
r_{\al,\1} < 0 < r_{\al,\2}, \  r_{\be,\1} <  0 < r_{\be,\2}
\end{equation*}
in order for ${\bf L}$ to determine a convex compact quadrilateral $\Delta$. (Here we have simplified notation in the
regular case, writing $\ell$ rather than $\ell^\pm$ when the meaning is clear.)

Using the expressions \eqref{solution-parallelogram}, \eqref{solution-trapezoid}, \eqref{solution-regular} for the corresponding matrices ${\bf H}={\bf H}^{A,B}$,  one can show that the boundary conditions given  in  Proposition~\ref{p:toric-ccs} are equivalent to
\begin{equation}\label{ambitoric-boundary}
A(\al_k)=0=B(\be_k)=0, \, A'(\al_k)=-2r_{\al,k}, \, B'(\be_k)
=2r_{\be,k} \quad (k=\1,\2),
\end{equation}
whereas the positivity and smoothness conditions mean that $A(x)$ (resp. $B(y)$) is smooth and positive on $[\al_\1, \al_\2]$ (resp. on $[\be_\1, \be_\2]$). This leads to the following (compare with \cite[Prop.~3]{ambitoric2})
\begin{prop}\label{ambitoric-compactification}  Given the data of real numbers
\begin{equation*}
0<\be_\1<\be_\2<\al_\1<\al_\2, \ r_{\al,\1} < 0 < r_{\al,\2}, \  r_{\be,\1} <  0 < r_{\be,\2}
\end{equation*}
which satisfy the integrality condition
\begin{equation*}
{\rm span}_{\Z}\Big\{\frac{p_{\al_\1}}{r_{\al,\1}}, \frac{p_{\al_\2}}{r_{\al,\2}}, \frac{p_{\be_\1}}{r_{\be,\1}}, \frac{p_{\be_\2}}{r_{\be, \2}} \Big\} \cong \Z^2,
\end{equation*}
 two smooth functions $A(z)$ and $B(z)$ satisfying  the positivity conditions $A(x)>0$ on $(\al_\1, \al_\2)$, $B(y)>0$ on  $(\be_\1, \be_\2)$ and the boundary conditions \eqref{ambitoric-boundary}, and, in the regular case,   a quadric $q(z)$  satisfying  $q(x,y) >0$ on $[\al_\1, \al_\2] \times [\be_\1, \be_\2]$, the corresponding ambitoric metric compactifies as a toric K\"ahler metric on the compact toric orbifold $M$ classified by the labelled quadrilateral  $(\Delta, {\bf L})$.
\end{prop}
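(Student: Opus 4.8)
The plan is to deduce the statement from Proposition~\ref{p:toric-ccs}. In each of the three cases (product, Calabi, regular), the local ambitoric K\"ahler metric on the open dense set $M^0$ is already written in the Guillemin form \eqref{toricmetric} with respect to momentum/angular coordinates adapted to $\tor$, with matrix-valued function $\mathbf{H}^{A,B}$ given by \eqref{solution-parallelogram}, \eqref{solution-trapezoid} or \eqref{solution-regular}. It therefore suffices to transport $\mathbf{H}^{A,B}$ to the quadrilateral $\Delta$ via the momentum coordinates and check that it satisfies the smoothness, boundary-value and positivity conditions of Proposition~\ref{p:toric-ccs} relative to $\mathbf{L}$. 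That proposition then yields a global $\T$-invariant, $\omega$-compatible almost-K\"ahler metric on the compact toric orbifold $M$ attached to $(\Delta,\mathbf{L})$ by the Lerman--Tolman construction \cite{LT}, and a separate easy argument promotes it to a K\"ahler metric.

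First I would set up the coordinate dictionary already recorded just before the proposition: $(\mu_1,\mu_2)=(x,y)$ in the product case, $(\mu_1,\mu_2)=(x,xy)$ in the Calabi case by \eqref{calabi-moments}, and the $\mu_i^\pm$ of \eqref{moments} through $\tor\cong S^2_{0,q}$ in the regular case. In all three cases each line $x=\alpha$ (resp.\ $y=\beta$) becomes an affine line $\ell_\alpha=0$ (resp.\ $\ell_\beta=0$) with inward conormal $p_\alpha=d\ell_\alpha$, $p_\beta=d\ell_\beta$ (formula \eqref{normals} in the regular case). With the ordering $0<\beta_1<\beta_2<\alpha_1<\alpha_2$ and, in the regular case, $q(x,y)>0$ on the closed rectangle $R=[\alpha_1,\alpha_2]\times[\beta_1,\beta_2]$, the momentum coordinates restrict to a diffeomorphism of the open rectangle onto $\Delta^0$ (the relevant Jacobians --- $1$, $x$, and a nonzero $(x-y)$/$q(x,y)$-type expression --- being nonvanishing on $R$), so the closure $\Delta$ is exactly the convex compact quadrilateral cut out by $\ell_{\alpha_1},\ell_{\alpha_2},\ell_{\beta_1},\ell_{\beta_2}$; the sign conditions $r_{\alpha,1}<0<r_{\alpha,2}$, $r_{\beta,1}<0<r_{\beta,2}$ make $\tfrac1{r}\ell$ a consistent inward-pointing set of defining affine functions, i.e.\ $\mathbf{L}$ a labelling of $\Delta$, and the integrality hypothesis is precisely what makes $(\Delta,\mathbf{L})$ a rational labelled Delzant polytope, hence the classifying datum of a compact toric symplectic orbifold $(M,\omega,\T)$.

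Next I would verify the three conditions of Proposition~\ref{p:toric-ccs}. \emph{Smoothness}: from the explicit formulas, $\mathbf{H}^{A,B}$ extends smoothly to $\Delta$ once $A,B$ are smooth on the closed intervals, the denominators $(x-y)$ and $q(x,y)$ in the regular case being bounded away from $0$ on $R$ (since $x>y$ there and $q>0$ by hypothesis), so no singularity arises, including at the corners of $R$. \emph{Boundary values}: on the facet $x=\alpha_k$, with conormal $u=p_{\alpha_k}/r_{\alpha,k}$, the vanishing $A(\alpha_k)=0$ (together, in the Calabi and regular cases, with the vanishing of the facet's linear function) forces $\mathbf{H}^{A,B}(u,\cdot)=0$, and differentiating along $x$ and using $A'(\alpha_k)=-2r_{\alpha,k}$ forces $(d\mathbf{H}^{A,B})(u,u)=2u$; the facets $y=\beta_k$ are identical with $B(\beta_k)=0$, $B'(\beta_k)=2r_{\beta,k}$. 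This is exactly the asserted equivalence \eqref{ambitoric-boundary}$\Leftrightarrow$\eqref{toricboundary}. \emph{Positivity}: on $\Delta^0$, $\mathbf{H}^{A,B}$ is positive definite because $A>0$, $B>0$ on the open intervals and, in the regular case, $x-y>0$ and $q(x,y)>0$; on the interior of a facet $x=\alpha_k$, positivity modulo the conormal reduces to $B(y)>0$ on $(\beta_1,\beta_2)$ (and symmetrically), while at the vertices there is nothing to check.

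Finally, on the dense set $M^0$ the metric produced by Proposition~\ref{p:toric-ccs} coincides with the given ambitoric K\"ahler metric, whose almost complex structure is integrable; hence its Nijenhuis tensor vanishes on $M^0$, and by continuity on all of $M$, so $g$ is genuinely K\"ahler --- equivalently, $\mathbf{H}^{A,B}=\Hess(u)^{-1}$ for the smooth convex symplectic potential $u$ one writes down explicitly in each case. I expect the main obstacle to be the boundary-value computation in the \emph{regular} case: there the conormal must be read as the element $p^{\pm}_{\alpha_k}(z)=q(\alpha_k,z)(z-\alpha_k)$ of $S^2_{0,q}$, the contraction with $\mathbf{H}^{A,B}_{\pm}$ in \eqref{solution-regular} drags along the factors $q(x,y)$ and $x-y$, and one must expand $A(x)$ near $x=\alpha_k$ and carefully identify $(d\mathbf{H}^{A,B}_{\pm})(p^\pm_{\alpha_k},p^\pm_{\alpha_k})$ with $p^\pm_{\alpha_k}$ as elements of $\tor$ using the discriminant pairing and the $\tfrac12{\rm ad}_q$ identification. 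This is bookkeeping rather than anything deep --- it is essentially \cite[Prop.~3]{ambitoric2} --- but it is where the argument needs genuine care; the coordinate dictionary, the convexity of $\Delta$, and the integrability step are routine.
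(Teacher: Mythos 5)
Your proposal is correct and follows essentially the same route as the paper: the paper offers no separate proof but relies on the discussion immediately preceding the statement, namely that the explicit matrices ${\bf H}^{A,B}$ of \eqref{solution-parallelogram}, \eqref{solution-trapezoid}, \eqref{solution-regular} satisfy the smoothness, boundary \eqref{toricboundary}$\Leftrightarrow$\eqref{ambitoric-boundary} and positivity conditions of Proposition~\ref{p:toric-ccs}, together with the reference to \cite[Prop.~3]{ambitoric2}. Your write-up simply fills in the same verification (coordinate dictionary, Lerman--Tolman classification from the integrality condition, and integrability by density of $M^0$) in more detail.
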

\begin{defn} We shall refer to the compact K\"ahler orbifold metrics obtained in Proposition~\ref{ambitoric-compactification} as {\it ambitoric compactifications}, and additionally describe them as positive/negative if we are referring specifically to
the metric of positive/negative regular type. Note that the quadrilaterals corresponding to ambitoric compactifications
of product and Calabi types are, respectively, parallelograms and trapeziods.
\end{defn}
\subsubsection{Classification of ambitoric  Einstein--Maxwell metrics}
We are now ready to prove our main result of this section.
\begin{thm}\label{ambitoric-EM-classification} Let $(M, \omega, \T)$ be a compact symplectic toric $4$-orbifold whose rational Delzant polytope is a labelled quadrilateral $(\Delta, {\bf L})$ {\rm (}equivalently,  $b_2(M)=2${\rm )} and $f$ the pull-back to $M$ via the moment map of a strictly positive affine function on $\Delta$. Then the following conditions are equivalent
\begin{enumerate}
\item[(i)] $(M, \omega, \T)$ is $K$-polystable with respect to $f$;
\item[(ii)] $(M,\omega, \T)$ admits a $\T$-invariant K\"ahler metric $g$ such that $\tilde g = \frac{1}{f^2} g$ is  a conformally K\"ahler, Einstein--Maxwell metric on $M$;
\item[(iii)] $(M,\omega, \T)$ admits an ambitoric compactification $g$ such that $\tilde g = \frac{1}{f^2} g$  is an Einstein--Maxwell  metric on $M$.
\end{enumerate}
In particular, assuming condition (iii),  if $(M,\omega)$ admits any $\T$-invariant conformally-K\"ahler, Einstein--Maxwell metric with Killing potential $f$, it must be an ambitoric compactification.
\end{thm}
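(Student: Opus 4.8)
The plan is to establish the cycle of implications (iii)$\Rightarrow$(ii)$\Rightarrow$(i)$\Rightarrow$(iii). The first is tautological, an ambitoric compactification being in particular a $\T$-invariant K\"ahler metric. For (ii)$\Rightarrow$(i): through its symplectic potential, a $\T$-invariant K\"ahler metric $g\in\mathcal C_\omega^\T$ with $\tilde g=\frac1{f^2}g$ Einstein--Maxwell amounts to a solution $u\in\mathcal S(\Delta,{\bf L})$ of \eqref{modified-abreu}, so Corollary~\ref{c:K-stability} gives that $(\Delta,{\bf L},f)$ --- i.e.\ $(M,\omega,\T)$ --- is $K$-polystable with respect to $f$. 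The ``in particular'' clause then follows from Theorem~\ref{uniqueness}: granting (iii), we have an ambitoric compactification $g$ with $\tilde g=\frac1{f^2}g$ Einstein--Maxwell, and any other $\T$-invariant conformally K\"ahler, Einstein--Maxwell metric on $(M,\omega)$ with Killing potential $f$ again corresponds to a solution of \eqref{modified-abreu}, hence differs from $g$ by a $\T$-equivariant isometry and is in particular ambitoric. Thus only (i)$\Rightarrow$(iii) needs real work, and it will be obtained by the explicit construction of \cite{ambitoric2}, with the extremality conditions \eqref{regular-extremal} there replaced by the Einstein--Maxwell ones \eqref{regular-EM} (resp.\ \eqref{EM-parallelogram-normal} and \eqref{EM-calabi-normal} for parallelograms and trapezoids).

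For (i)$\Rightarrow$(iii) the first step is to put the labelled quadrilateral $(\Delta,{\bf L})$ together with the positive affine function $f$ into ambitoric normal form, as in \cite{ambitoric2}: using the ${\rm SL}(2,\R)$ (resp.\ translational) freedom in the ambitoric $(x,y)$-coordinates one arranges $0<\be_\1<\be_\2<\al_\1<\al_\2$, $x\in[\al_\1,\al_\2]$, $y\in[\be_\1,\be_\2]$, fixes the quadric $q(z)$ (positive on the rectangle, in the regular case) together with the quadric $p(z)$, $\langle p,q\rangle=0$, representing $f$ as $p(x,y)/q(x,y)$, and reduces to the product or Calabi ansatz when $\Delta$ is a parallelogram or trapezoid. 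By Remark~\ref{r:ambitoric-scalar} and its product and Calabi analogues, equation \eqref{modified-abreu} then becomes the requirement that $A$ and $B$ be polynomials of degree $\le4$ obeying the structural relations \eqref{regular-EM} for this $p$ (with $\rho$ of degree $\le2$, $\langle\rho,p\rangle=0$, and $R$ of degree $\le4$ with $\langle(p,R)^{(2)},q\rangle=0$), respectively the normalizations \eqref{EM-parallelogram-normal}, \eqref{EM-calabi-normal}. Imposing in addition the boundary conditions \eqref{ambitoric-boundary} yields a finite affine-linear system for the remaining coefficients; I would solve it explicitly, obtaining a solution $(A,B)$ which is moreover unique, consistently with Theorem~\ref{uniqueness}.

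The crux is to show that this candidate $(A,B)$ actually produces a metric precisely when $(M,\omega,\T)$ is $K$-polystable with respect to $f$, i.e.\ that $K$-polystability forces $A>0$ on $(\al_\1,\al_\2)$ and $B>0$ on $(\be_\1,\be_\2)$. The matrix ${\bf H}^{A,B}$ built from the solution is smooth, satisfies the facet boundary conditions \eqref{ambitoric-boundary} --- hence those of Proposition~\ref{p:toric-ccs} --- and solves \eqref{matrix-abreu}, so Theorem~\ref{toric-futaki} expresses the Donaldson--Futaki functional $\mathfrak F_{\Delta,{\bf L},f}$ by \eqref{Futaki-Hessian} with this ${\bf H}^{A,B}$, and the integration-by-parts argument of Corollary~\ref{c:K-stability} extends the evaluation to simple crease convex PL functions without using positivity of ${\bf H}^{A,B}$. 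Evaluating on the crease function $\varphi_c$ whose crease is the image of $\{x=c\}$ with $c\in(\al_\1,\al_\2)$, formula \eqref{Futaki-segment} gives $\mathfrak F_{\Delta,{\bf L},f}(\varphi_c)=\int_{F_c}\frac1{f^{2m-1}}{\bf H}^{A,B}(dL_c,dL_c)\,d\sigma$; substituting the normals \eqref{normals} and the expressions \eqref{solution-parallelogram}, \eqref{solution-trapezoid} or \eqref{solution-regular} for ${\bf H}^{A,B}$, and using $x=c$ on $F_c$, the integrand reduces to $A(c)$ times a manifestly positive function of $y$, and symmetrically crease functions along $\{y=c\}$ detect the sign of $B(c)$. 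Since \eqref{ambitoric-boundary} already imposes $A(\al_k)=0$, $A'(\al_k)=-2r_{\al,k}$ with $r_{\al,\1}<0<r_{\al,\2}$ (and likewise for $B$), $(M,\omega,\T)$ is $K$-polystable with respect to $f$ if and only if $A>0$ on $(\al_\1,\al_\2)$ and $B>0$ on $(\be_\1,\be_\2)$; under this positivity Proposition~\ref{ambitoric-compactification} produces the ambitoric compactification $g$, and Remark~\ref{r:ambitoric-scalar} (with $\langle p,q\rangle=0=\langle(p,R)^{(2)},q\rangle$, as arranged) forces $s_{\tilde g}$ constant, so $\tilde g=\frac1{f^2}g$ is Einstein--Maxwell --- this is (iii).

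I expect the main obstacle to lie in the middle step: performing, uniformly across the product, Calabi and regular cases, the boundary-value linear algebra that produces the explicit polynomials $A$, $B$ from $(\Delta,{\bf L},f)$ under the Einstein--Maxwell constraints \eqref{regular-EM}/\eqref{EM-parallelogram-normal}/\eqref{EM-calabi-normal}, together with checking that every such pair $(\Delta,{\bf L},f)$ can indeed be brought to the required normal form and that the resulting system is uniquely solvable; and, closely tied to this, the bookkeeping that identifies the positive factor multiplying $A(c)$ (resp.\ $B(c)$) in the crease evaluation of $\mathfrak F_{\Delta,{\bf L},f}$, so that the abstract $K$-polystability hypothesis is converted into the concrete positivity of $A$ and $B$ needed to invoke Proposition~\ref{ambitoric-compactification}. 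The remaining implications and the uniqueness clause are then formal consequences of results already established.
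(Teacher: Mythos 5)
Your reduction (iii)$\Rightarrow$(ii)$\Rightarrow$(i) via Corollary~\ref{c:K-stability}, the use of Theorem~\ref{uniqueness} for the final clause, and the crease-function argument for positivity of $A$ and $B$ all match the paper. But there is a genuine gap in the middle step, where you claim that imposing the boundary conditions \eqref{ambitoric-boundary} on polynomials of the form \eqref{regular-EM} ``yields a finite affine-linear system \dots\ I would solve it explicitly, obtaining a solution $(A,B)$'', with $K$-polystability entering only afterwards to force $A>0$, $B>0$. Count the parameters: \eqref{regular-EM} leaves $5+3-2=6$ free constants (the coefficients of $R$ and $\rho$ minus the two linear constraints $\langle\rho,p\rangle=0$ and $\langle(p,R)^{(2)},q\rangle=0$), while \eqref{ambitoric-boundary} imposes $8$ conditions; the system is overdetermined by two and is generically unsolvable. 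Indeed, by Theorem~\ref{toric-futaki} any formal solution of \eqref{matrix-abreu} with the right boundary behaviour forces $\mathfrak{F}_{\Delta,{\bf L},f}$ to vanish on affine functions, so no candidate $(A,B)$ exists whenever the Futaki invariant is nonzero. The heart of the paper's argument (Cases 1--3) is exactly this point: the boundary conditions determine the eight coefficients as linear functions of $r_{\al,k},r_{\be,k}$, the two excess structural constraints become two linear conditions on these labels, and a rank comparison identifies them with the two conditions $\mathfrak{F}_{\Delta,{\bf L},f}(\mu_i)=0$, $i=1,2$; hypothesis (i) then enters through $K$-polystability $\Rightarrow$ Futaki vanishing $\Rightarrow$ solvability. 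Your outline never brings (i) to bear on the existence of $(A,B)$ (you use it only for positivity), so as written it would ``construct'' a formal solution for every $(\Delta,{\bf L},f)$, which is false.

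A secondary, smaller issue is the normal-form/case division. You propose to use the product or Calabi ansatz ``when $\Delta$ is a parallelogram or trapezoid'', but the trichotomy depends on $f$ as well as on $\Delta$: the paper singles out the unique conic $\mathcal{C}(\Delta,f)$ in the pencil through the four points of ${\bf L}$ whose bilinear form makes $\boldsymbol{1}$ and $f$ orthogonal; the positive and negative regular ansatz correspond to $\mathcal{C}(\Delta,f)$ nonsingular, respectively singular with $\boldsymbol{1}\notin\mathcal{C}(\Delta,f)$, and product/Calabi is used only when $\boldsymbol{1}\in\mathcal{C}(\Delta,f)$, which forces $\Delta$ to be a trapezoid \emph{and} the line $\{f=0\}$ to be parallel to its pair of parallel facets. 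A trapezoid with generic $f$ falls into the regular cases, with $q$ chosen adapted to $f$ so that $f=p(x,y)/q(x,y)$ with $\langle p,q\rangle=0$; it is this conic-pencil construction (via \cite[Prop.~4 \& 5]{ambitoric2}) that guarantees the adapted ambitoric coordinates exist, and your plan offers no mechanism for it beyond flagging it as an obstacle. The remaining ingredients of your proposal --- in particular evaluating \eqref{Futaki-segment} on creases $\{x=c\}$, where the integrand is $A(c)$ times a positive function of $y$ by \eqref{solution-regular} and the choice of normals \eqref{normals}, so that polystability yields $A>0$ and $B>0$ and Proposition~\ref{ambitoric-compactification} plus Remark~\ref{r:ambitoric-scalar} finish the construction --- are in agreement with the paper.
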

\begin{proof} Using  Corollary~\ref{c:K-stability}, ${\rm (iii)} \Rightarrow {\rm (ii)} \Rightarrow {\rm (i)}$. Also, by the uniqueness (Theorem~\ref{uniqueness}), assuming condition (iii), any $\T$-invariant conformally-K\"ahler, Einstein--Maxwell metric with Killing potential $f$ on $(M,\omega)$ must be an ambitoric compactification.

We need to show ${\rm (i)} \Rightarrow {\rm (iii)}$.  We follow very closely the  proof of \cite[Thm.~ 1]{ambitoric2} which, in turn, is a generalization of the method in \cite{legendre}.

Let us denote by $\mathfrak{h}= {\rm Aff}(\tor^*)$ the $3$-dimensional vector space of affine functions in momenta. There is a distinguished element $\boldsymbol{1} \in \mathfrak{h}$, representing the constant affine function equal to $1$. We consider the pencil of conics on $\Proj(\mathfrak{h})$ which pass through the four points of $\Proj(\mathfrak{h})$ corresponding to the four affine functions defining ${\bf L}$.
There exists a unique conic in this pencil,  whose associated symmetric bilinear form makes $\boldsymbol{1}$ and $f$ orthogonal. We shall denote this conic by $\mathcal{C}(\Delta,  f)$, noting that  $\mathcal{C}(\Delta,  f)$ does not depend on rescaling the elements of  ${\bf L}$.

\noindent
{\bf Case 1.}  $\mathcal{C}(\Delta, f)$ is non-singular.  As $f>0$ on $\Delta$ (and therefore the affine line $f=0$ does not intersect the  segment of the Newton line between midpoints of the diagonals of $\Delta$), by \cite[Prop.~4 \& 5]{ambitoric2},  $(\Delta, {\bf L})$ admits a positive regular ambitoric compactification such that, using the identification of the space of quadric polynomials $S^2$ with $\mathfrak{h}$ which sends an element $w\in S^2$ to the affine function in momenta  $w(x,y)/q(x,y)$, $\mathcal{C}(\Delta, f)$ is identified  with the (polarization of) the discriminant of elements of $S^2$, i.e.  the inner product $\langle \cdot, \cdot \rangle$ for elements in $S^2$. It follows that $\boldsymbol{1}$  corresponds to $q$   and  $f= \frac{p(x,y)}{q(x,y)}$ for a polynomial $p$ such that  $\langle p, q \rangle =0$.

Fixing the data $\al_{k}, \be_k, r_{\al,k}, q$ of the ambitoric compactification, and varying the functions $A(z)$ and $B(z)$,  introduces a class of compatible positive ambitoric compactifications of $(\Delta, {\bf L}, f)$.  According to Proposition~\ref{ambitoric-compactification}, in order to show the existence of an ambitoric compactification $g$ such that $\tilde g = \frac{1}{f^2} g$  is an Einstein--Maxwell  metric on $M$, it suffices  to find polynomials $A(z)= \sum_{k=0}^4 a_k z^k$ and $B(z)= \sum_{k=0}^4 b_k z^k$ of degree $\le 4$, which satisfy  \eqref{regular-EM}, the boundary conditions \eqref{ambitoric-boundary}, and which are positive respectively on $(\al_\1, \al_\2)$ and $(\be_\1, \be_\2)$. Leaving momentarily the positivity condition for $A$ and $B$ aside, we now make a count of the free
constants versus the constraints in this problem. First, as $q$ is fixed, and as $f$ is also given, the quadric $p$ is also fixed (recall that any affine function in momenta is written as $f= \frac{p}{q}$). Thus,  the first two
relations in \eqref{regular-EM} show that the polynomials $A$, $B$ are determined by eight constants (the five coefficients of $R$, along with the three coefficients of $\rho$). These constants are subject to two linear relations (the condition that $\rho$ and $p$ are orthogonal and the last relation in \eqref{regular-EM}) which will be viewed as  two linear constraints on these eight constants. On the other hand, it is easy to see that the boundary conditions \eqref{ambitoric-boundary} (for fixed $\al_k, \be_k$) determine  uniquely the $8$ constants as linear functions of $r_{\al,k}$ and $r_{\be, k}$. Thus, the two linear constraints can be re-expressed as two linear relations between the real numbers $r_{\al, k}$ and $r_{\be,k}$.

Note that if polynomials $A(z), B(z)$ satisfying  \eqref{regular-EM} and \eqref{ambitoric-boundary} exist, then,  even if they do not satisfy the positivity condition, they still give rise to a smooth matrix ${\bf H}^{A,B}$ satisfying \eqref{matrix-abreu} and the boundary conditions \eqref{toricboundary}
on $(\Delta, {\bf L})$. By Theorem~\ref{toric-futaki}, the Futaki invariant $\mathfrak{F}_{\Delta, {\bf L}, f}$ defined by \eqref{Donaldson-Futaki-1} must vanish, which is equivalent to
$$\int_{\partial \Delta} \frac{\varphi}{f^{3}}d\sigma \int_{\Delta} \frac{1}{f^{5} }d\mu =  \int_{\partial \Delta} \frac{1}{f^{3} }d\sigma  \int_{\Delta} \frac{\varphi}{f^{5}}d\mu$$
for $\varphi = \mu_i, i=1,2$. This in turn can also be expressed via two  linear constraints on the parameters $r_{\al,k}, r_{\be,k}$, as the latter appear as multiplication factors of $d\sigma$ restricted to the corresponding facet (see \eqref{boundary-measure}).   Rank considerations for the constraints corresponding to the vanishing of the Futaki invariant thus allow us to conclude that  the two sets of two linear conditions are in fact equivalent. Since the $K$-polystability of $(\Delta, {\bf L}, f)$ implies the vanishing  of $\mathfrak{F}_{\Delta, {\bf L}, f}$, it follows that there exist polynomials $A(z)$ and $B(z)$ satisfying \eqref{regular-EM} and the boundary conditions \eqref{ambitoric-boundary}.

We still need to show that $A(z)$ and $B(z)$ are positive, respectively, on $(\al_\1, \al_\2)$ and $(\be_\1, \be_\2)$.  To this end, suppose that $A$, say, vanishes at $x=\alpha$, $\alpha \in (\al_\1, \al_\2)$.  Consider the simple crease function $\varphi_{\alpha} = {\rm max}\{0, \ell_{\al}^+\}$ where, we recall from \S~\ref{s:compactification},  $\ell_{\al}^+= \frac{(x-\al)(y-\al)}{q(x,y)}$. As shown in the proof of Corollary~\ref{c:K-stability} (where positive definiteness of ${\bf H}$ is not used),  the formal solution ${\bf H}={\bf H}^{A,B}_+$ of \eqref{matrix-abreu} can be used to compute
\begin{equation*}
\mathfrak{F}_{\Delta, {\bf L}, f}(\varphi_{\alpha}) = \int_{F_{\al}} {\bf H}^{A,B}_+(p^+_{\al}, p^+_{\al})/f^{2m-1} d\sigma,
\end{equation*}
where $F_{\al}$ denotes the crease of $\varphi_\alpha$, i.e.  the segment in $\Delta$ on which $\ell^+_{\alpha}=0$ and $p^+_{\al}= d\ell^+_{\al}=q(\al, z)(z-\al)$ is the normal. By \eqref{solution-regular}, the latter integral must be zero, which is a contradiction with the $K$-polystability of $(\Delta, {\bf L}, f)$.  We thus conclude that $A(x)$ (resp. $B(y)$) does not vanish on $(\al_\1, \al_\2)$ (resp. $(\be_\1, \be_\2)$), so it must be  positive  according to the sign of its first derivative at $\al_\1$ (resp. $\be_\1$).

\smallskip
\noindent
{\bf Case 2.}   $\mathcal{C}(\Delta, f)$ is singular (i.e. a pair of lines) and $\boldsymbol{1} \notin \mathcal{C}(\Delta, f)$. Dualizing, this is the case when $\Delta$ is not a trapezoid and the affine line $\{f=0 \}$ passes through the mid-point of the exterior diagonal of $\Delta$ (see the proof of \cite[Prop. 5]{ambitoric2}). By \cite[Prop.~4 \& 5]{ambitoric2},  $(\Delta, {\bf L})$ admits a negative regular ambitoric compactification, associated to the  data $\al_k, \be_k,  r_{\al, k}, r_{\be, k}$ and $q$.  As observed in Sect.~\ref{s:regular}, in the corresponding $(x,y)$-coordinates, the affine function $f$ can be written as $f(x,y) = \frac{p(x,y)}{x-y}$ for a quadric $p$ satisfying $\langle p, q \rangle=0$.  The same argument as the one developed in Case 1 shows that one can find polynomials $A(z)$ and $B(z)$ which satisfy \eqref{regular-EM}, \eqref{ambitoric-boundary}, and which are positive respectively on $(\al_\1, \al_\2)$ and $(\be_\1, \be_\2)$, thus proving the claim.

\smallskip
\noindent
{\bf Case 3.} $\mathcal{C}(\Delta, f)$ is singular (i.e. a pair of lines) and $\boldsymbol{1} \in \mathcal{C}(\Delta, f)$. If ${\bf 1}$ is the singular point  of $\mathcal{C}(\Delta,f)$, i.e.  the intersection of the two lines defining $\mathcal{C}(\Delta, f)$,  the dual conic $\mathcal{C}^*(\Delta, f)$ is a double line at infinity, where the affine structure on $\Proj(h^*)$ is defined with
respect to the projective line in $\Proj(h^*)$ dual to ${\bf 1} \in \Proj(h)$. As this line must pass through the (exterior) points of intersection of the lines of opposite facets  of $\Delta$ (recall that $\mathcal{C}(\Delta,f)$ is in the pencil of conics
passing through the  $4$ points determined by the affine lines of ${\bf L}$), $\Delta$ must be a parallelogram. In this case,  the orthogonality of $\boldsymbol{1}$  and $f$  means that $f$ belongs to one of the pair of lines $\mathcal{C}(\Delta, f)$ determines, or equivalently, that the affine line $\{ f=0 \}$ is parallel to a pair of parallel facets of $\Delta$. Similarly,  if
${\bf 1}\in \mathcal{C}(\Delta, f)$ is not the singular point of $\mathcal{C}(\Delta, f)$, the dual conic $\mathcal{C}^*(\Delta, f)$ is a double line passing through the point at infinity, which must  then be the intersection point of two lines determined by a pair of opposite facets of $\Delta$, i.e.  $\Delta$ must be a trapezoid which is not a parallelogram. The orthogonality between $\boldsymbol{1}$ and $f$ translates again into the fact that the affine line  $\{f=0\}$ is  parallel to the pair of parallel facets of $\Delta$. One can thus use the product or Calabi type ambitoric metrics to obtain an ambitoric compactification on
$(\Delta, {\bf L})$. The fact that $\{f=0\}$ is  parallel to a pair of  parallel facets of $\Delta$ means that $f$ can be written as an affine function of $x$ (with respect to the ambitoric compactification). The case when $f$ is constant is analyzed in \cite{legendre}. By rescaling $f$ if necessary, we may assume $f(x)= x+ \al$, and if we can construct the ambitoric Einstein--Maxwell metrics using Proposition \ref{ambitoric-compactification}, they would have to be of product or Calabi type.
To this end, we need to find two polynomials $A(z)=\sum_{k=0}^4 a_k z^k$ and $B(z)= \sum_{k=0}^2 b_k z^k$ of degrees $\le 4$ and
$\le 2$, respectively, which satisfy \eqref{EM-parallelogram-normal} or \eqref{EM-calabi-normal}, accordingly to the type, as well as \eqref{ambitoric-boundary} and the positivity conditions. The important observation is that in either case (as
${\rm deg}(B) =2$), the four boundary conditions for $B(z)$ imply a first overdetermined linear constraint
$r_{\be,\1} =-r_{\be,\2}$, so in particular $B(y)$ is determined uniquely. Then, as $A(z)$ has five coefficients, the four boundary conditions for $A(z)$ together with the two linear constraints for its coefficients (given by \eqref{EM-parallelogram-normal} or
\eqref{EM-calabi-normal}, according to the type) determine altogether a second overdetermined linear constraint. This translates
again to two linear constraints for the real numbers $r_{\al, k}, r_{\be, k}$. The rest of the argument is identical to the one in Case~1. \end{proof}

\begin{rem}\label{extension} The proof of Theorem~\ref{ambitoric-EM-classification} shows that for any labelled quadrilateral $(\Delta, {\bf L})$ and a positive affine function $f$, the existence of  polynomials  $A(z), B(z)$ of degree $\le 4$,  which satisfy \eqref{regular-EM} and the boundary conditions \eqref{ambitoric-boundary} is equivalent to the vanishing of the Futaki invariant $\mathfrak{F}_{\Delta, {\bf L}, f}$. This observation can be used as an effective tool to compute whether or not  $\mathfrak{F}_{\Delta, {\bf L}, f}=0$.  On the other hand, there are cases when $\mathfrak{F}_{\Delta, {\bf L}, f}=0$ is not only necessary but also sufficient for the existence of a solution of the Einstein--Maxwell equation \eqref{modified-abreu}. Indeed, if $(\Delta, {\bf L})$ is a labelled trapezoid and the affine line in $\tor^*$ determined by $f$ is  parallel to a pair of parallel facets of $\Delta$, then the proof of Theorem~\ref{ambitoric-EM-classification} shows that vanishing of the Futaki invariant implies there exist a formal solution of product or Calabi type. In either case, the boundary conditions \eqref{ambitoric-boundary} and the respective linear constraints \eqref{EM-parallelogram-normal} and \eqref{EM-calabi-normal} for $A(z)$ and $B(z)$  imply that $A(x)$ and $B(y)$ are positive on their respective domains of definition (this is already observed in \cite[p.379]{legendre}): for $B(y)$ this is obvious as ${\rm deg} (B)=2$, $B(\beta_{\1})=0$ and $B'(\beta_{\1})>0$ (which  also forces $b_2<0$), whereas if $A(z)$ has all of its roots in the interval $[\alpha_{\1}, \alpha_{\2}]$, $A'(\alpha_{\1})>0$ then forces $a_4<0$, which is incompatible with the linear constraint $a_2=-b_2>0$ (recall that we assumed $\alpha_{\2}>\alpha_{\1} >0$).  Similarly, e.g. by using Proposition~\ref{duality} and an argument from \cite[Example~1]{ambitoric2}, one can show that the vanishing of $\mathfrak{F}_{\Delta, {\bf L}, f}$ is a sufficient condition for $K$-polystability of $(\Delta, {\bf L}, f)$ in Case~2, as well as in Case~1 when $q(z)= (z-\alpha)^2$, i.e. when $g_+$ is an orthotoric K\"ahler metric, or of parabolic type in the terminology of \cite{ambitoric1}. In general, we expect that there should exist unstable labelled quadrilaterals  $(\Delta, {\bf L}, f)$ with $\mathfrak{F}_{\Delta, {\bf L}, f}=0$,  but we do not show this here. \end{rem}

\subsubsection{Examples} We can use Proposition~\ref{duality} to generate many orbifold examples of Einstein--Maxwell metrics. Indeed, let $\Delta$ be any compact convex quadrilateral in $\R^2$ whose vertices are rational points, and which is not a trapezoid. It is shown in \cite{legendre} that there exists a unique,  up to an overall scale,  labelling ${\bf L}=\{L_1, \ldots, L_4\}$ of $\Delta$ such that $(\Delta, {\bf L})$ admits a positive regular ambitoric K\"ahler--Einstein metric.  It follows by \cite[Thm.~1]{ambitoric2}, that for any small positive rational numbers   $r_i$, the  labelled quadrilateral $(\Delta, {\bf L}_r)$ with ${\bf L}_r = \{\frac{1}{r_1}L_1, \ldots, \frac{1}{r_4}L_4\}$ admits a positive regular ambitoric extremal K\"ahler metric with positive scalar curvature $s$.
The correspondence given by Proposition \ref{duality} together with  Proposition \ref{ambitoric-compactification} then allows us
to associate to $(\Delta, {\bf L}_r)$ another labelled polytope $(\tilde\Delta , \tilde{\bf L}_r)$ which corresponds to the
ambitoric compactification of a positive regular Einstein-–Maxwell metric with conformal factor  $f = 1/s$  and same polynomials
$A(z)$ and $B(z)$ as for the extremal metric, but a different $q$.
This yields, for a given polytope $\Delta$, a family of examples,  parametrized by the four rational numbers $r_1, \ldots, r_4$  (note that an overall scale  of $r_i$'s introduces homothetic metrics). The polytopes $\Delta$ themselves are parametrized, up to an affine equivalence, by two rational parameters. We thus obtain
\begin{cor}\label{c:existence} There exists a continuous-countable family, depending on $5$  rational parameters, of non-homothetic simply-connected, compact conformally--K\"ahler, Einstein--Maxwell toric $4$-orbifolds whose second Betti number is $2$.
\end{cor}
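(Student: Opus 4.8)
The plan is to combine the duality of Proposition~\ref{duality} with the existence results of \cite{legendre,ambitoric2} for extremal ambitoric metrics, and then to check that the resulting map from the combinatorial data to homothety classes of metrics has finite fibres, so that the construction genuinely depends on $5$ rational parameters.

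First I would fix a compact convex quadrilateral $\Delta \subset \R^2$ with rational vertices which is \emph{not} a trapezoid. Up to affine equivalence (realised by an element of $\mathrm{GL}(2,\R) \ltimes \R^2$, which can be taken rational here), such quadrilaterals form a family depending on $8 - 6 = 2$ rational parameters. By \cite{legendre} there is a labelling $\mathbf L = (L_1, \ldots, L_4)$ of $\Delta$, unique up to an overall positive scale, for which $(\Delta, \mathbf L)$ carries a positive regular ambitoric K\"ahler--Einstein metric. For a quadruple of small positive rational numbers $r = (r_1, \ldots, r_4)$, set $\mathbf L_r = (\tfrac{1}{r_1} L_1, \ldots, \tfrac{1}{r_4} L_4)$; after refining the lattice $\Lambda$ so that $(\Delta, \mathbf L_r)$ is again a rational labelled Delzant polytope, \cite[Thm.~1]{ambitoric2} produces on the associated compact toric $4$-orbifold a positive regular ambitoric extremal K\"ahler metric $\bar g$ whose scalar curvature $s_{\bar g}$ is everywhere positive; for $r$ in an open dense subset (in particular $r \neq (1,1,1,1)$) the metric $\bar g$ is not cscK, so $s_{\bar g}$ is non-constant.

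Next I would apply Proposition~\ref{duality}: since $\bar g$ is ambitoric extremal of positive regular type with $s_{\bar g} > 0$, there is an ambitoric Einstein--Maxwell metric $\tilde g = \tfrac{1}{f^2} g$ of positive regular type with $f = 1/s_{\bar g}$, where $g$ is defined by the same polynomials $A(z), B(z)$ as $\bar g$ but with a new quadric $q$ satisfying $q(x,y) = s_{\bar g}(x,y)\, \bar q(x,y)$. By Proposition~\ref{ambitoric-compactification} the metric $g$ compactifies to a $\T$-invariant K\"ahler metric on a compact toric $4$-orbifold $\tilde M_r$ whose labelled quadrilateral $(\tilde\Delta, \tilde{\mathbf L}_r)$ is built from the same boundary data $\al_k, \be_k, r_{\al,k}, r_{\be,k}$ but with the new $q$; hence $\tilde g$ is a compact conformally K\"ahler, Einstein--Maxwell toric $4$-orbifold, and, its momentum polytope being a quadrilateral, $b_2(\tilde M_r) = 2$. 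The underlying space of a compact symplectic toric orbifold is simply connected (see e.g.\ \cite{LT}), which yields the simple-connectedness; and because $f$ is non-constant, the last assertion of Proposition~\ref{duality} gives $g \neq \bar g$, so $\tilde g$ is genuinely non-K\"ahler.

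Finally, the parameter count and the non-homothety claim, which I expect to be the main obstacle. We have $2$ rational parameters for $\Delta$ and $4$ for $r$, while an overall rescaling of $r$ replaces $\tilde g$ by a homothetic metric, so after removing this one-dimensional redundancy we are left with $5$ rational parameters. To see that these are essential, I would argue that the data are recoverable from $\tilde g$ up to a finite ambiguity: from $\tilde g$ one recovers the K\"ahler metric $g = f^2 \tilde g$ in its conformal class and the conformal factor $f$ (both up to scale, the K\"ahler metrics in $[\tilde g]$ being homothetic to $g$), hence the quadric $p = \bar q$ with $f = p/q$ together with $q, A, B$, and therefore the extremal metric $\bar g$ defined by $\bar q = p$, $\bar A = A$, $\bar B = B$; by the uniqueness of extremal toric metrics \cite{Guan} and the Delzant-type classification of symplectic toric orbifolds \cite{LT}, $\bar g$ determines $(\Delta, \mathbf L_r)$, hence the affine class of $\Delta$ and the ratios $r_i : r_j$, up to the (finite) automorphism group of the combinatorial quadrilateral. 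Thus the assignment (parameters)$\,\mapsto\,$(homothety class of $\tilde g$) has finite fibres, so its image is a continuous-countable, $5$-rational-parameter family of pairwise non-homothetic compact conformally K\"ahler, Einstein--Maxwell toric $4$-orbifolds with $b_2 = 2$; these are the orbifold compactifications of Riemannian analogues of the Pleba\'nski--Demia\'nski metrics \cite{PD} referred to in the Introduction.
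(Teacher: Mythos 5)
Your argument is correct and is essentially the paper's own construction: a rational non-trapezoidal quadrilateral (two rational moduli), Legendre's unique K\"ahler--Einstein labelling rescaled by small positive rationals $r_i$, the extremal metrics of \cite[Thm.~1]{ambitoric2}, and then Proposition~\ref{duality} combined with Proposition~\ref{ambitoric-compactification}, with the same count $2+4-1=5$ after discarding the overall scale of the $r_i$. Your finite-fibre justification of non-homothety goes beyond what the paper writes out and is sound, up to the harmless caveat that the conformal class of an ambitoric metric also contains the oppositely oriented K\"ahler metric $g_-$, so the recovery of $(g,f)$ from $\tilde g$ is only up to a finite (rather than trivial) ambiguity, which does not affect the conclusion.
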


\section{Further observations}\label{varia}
\subsection{Towards a quantized Futaki invariant on a polarized variety and $K$-stability}\label{s:quantized}  In Section~\ref{s:futaki-kahler-class} (see Corollary~\ref{c:complex-futaki}), we defined a Futaki invariant $\mathfrak{F}_{\Omega, K, a} : \mathfrak{g} \to \R$,  associated to a given K\"ahler class $\Omega$ on $(M,J)$, a vector field $K \in \mathfrak{g}$ (where $\mathfrak{g} = {\rm Lie}(G)$ for a compact subgroup $G \subset {\rm Aut}_r(M,J)$),  and a normalization constant $a>0$ which determines a positive Killing potential $f_{K, \omega, a}$ for $K$ for each K\"ahler metric $\omega \in \Omega$.

In the case when $\Omega= 2\pi c_1(L)$,  where $L$ is an ample holomorphic line bundle over  $(M, J)$, one expects to obtain a quantized version of $\mathcal{F}_{\Omega, K,a}$, i.e. an expression  which does not depend on the choice of a reference K\"ahler metric $\omega\in \Omega$,  similarly to \cite{Do-02,tian-old} and the ``twisted modified Futaki invariants'' introduced in \cite{BW, DS, D}.

To this end, we assume that $G \subset {\rm Aut}_r(M,J)$ is  an $\ell$-dimensional real torus which, without loss (by replacing $L$ by a suitable positive tensor power if necessary~\cite{kob-0}), admits a lifted action on the total space of $L$.  We denote by $\Lambda \subset \mathfrak{g}={\rm Lie}(G)$ the lattice of circle  subgroups of $G$,  and by $\Lambda^* \subset \mathfrak{g}^*$ the dual lattice. We consider the induced linear actions of $G$ on the finite dimensional vector spaces $H^0(M, L^{\otimes k})$ of holomorphic sections of $L^{\otimes k}$, and denote by
\begin{equation*}
H^0(M, L^{\otimes k}) = \bigoplus_{\lambda_i^{(k)} \in P_k} E_{\lambda_i^{(k)}}
\end{equation*}
the decomposition of $H^0(M, L^{\otimes k})$ as a direct sum of  common eigenspaces  for the commuting infinitesimal actions of elements in $\mathfrak{g}$; here $\lambda_i^{(k)}\in \mathfrak{g}^*$ are the weights of the complexified action of $G$, or equivalently, for each $H \in \mathfrak{g}$, $\{\lambda_i^{(k)}(H), i=1, \ldots, N_k\}$ is the spectrum, counted with multiplicity, of $(-iA^{(k)}_H)$, where $A^{(k)}_H$ denotes the induced infinitesimal action  of $H$  on $H^0(M, L^{\otimes k})$ and  $N_k = {\rm dim}_{\C}(H^0(M, L^{\otimes k})$. It is well-known that $\lambda_i^{(k)}\in \Lambda^*$. Also, for each $k\in \N$ we let $P_k$
denote the finite set of weights. Following \cite{BW,W},  one defines the {\it spectral  measure} on $\mathfrak{g}^* \cong \R^{\ell}$ by
\begin{equation*}\label{spectral}
\nu_k :=  \sum_{i=1}^{N_k} \delta_{{\lambda_i^{(k)}}/{k}},
\end{equation*}
which is supported on $P_k$.

We now fix a $G$-invariant hermitian metric $h$ on $L$ (which induces a hermitian metric $h^{(k)}$ on each $L^{\otimes k}$), with hermitian connection $\nabla$ and curvature $i\omega$,  where $\omega \in \Omega$ is a K\"ahler form on $(M,J)$. Let $\mu : (M, \omega) \to \mathfrak{g}^*$ be the corresponding moment map and $H_1, \ldots, H_{\ell} \in \mathfrak{g}$ be $\ell$ circle generators which allow us to  identify $\mathfrak{g}\cong \R^{\ell}$ and use the corresponding  momenta $(\mu_1, \ldots, \mu_{\ell})$  in order to regard the moment map as  $\mu : M \to \R^{\ell}$. Let us denote $\lambda_{ij}^{(k)}=\lambda_{i}^{(k)}(H_j) \in \Z^{\ell}$. It is then shown in \cite{W} that for any smooth function $\Phi(x_1, \ldots, x_{\ell})$ on $\R^{\ell}$,
\begin{equation}\label{b0-general}
\begin{split}
\lim_{k\to \infty} \frac{\nu_k(\Phi)}{k^m} &= \lim_{k\to \infty} \frac{1}{k^m}\sum_{i=1}^{N_k}\Phi(\lambda_{i1}^{(k)}/k, \ldots, \lambda_{i\ell}^{(k)}/k)\\
                                                                     & =\frac{1}{(2\pi)^m}\int_M \Phi(\mu_1, \ldots, \mu_{\ell}) v_{\omega}.
\end{split}
\end{equation}
This implies in particular that the expression on the second line is independent of the choice of a K\"ahler metric $\omega\in \Omega$, a result previously known from Futaki--Mabuchi~\cite{FM}. We note here that the above equality holds for a function $\Phi$ which is smooth on $P= \mu(M)$ (i.e. the convex hull in $\mathfrak{g}^*$ of the images of the fixed points of the $G$-action) and on the convex hull  of each $P_k$, see \cite{BW}.

We shall now specialize to the case when  $\mu_1>0$ on $M$,  and $\Phi(x_1, \ldots, x_{\ell}):= x_1^p x_j^q$ with $p, q \in \Z, q\ge 0$. Recall that (see e.g. \cite{gauduchon-book})  for a fixed  $G$-invariant hermitian metric $h$ on $L$ with  curvature $i\omega$, $\omega  \in \Omega$,  the hamiltonian function  $\mu_j$ of $H_j$ with respect to $\omega$ is defined up to  the choice of an integer additive constant (corresponding to different lifts of the action of  $G$ to $L$). Once these constants are  fixed (in our case the only constraint is  that  $\mu_1>0$),  the generators for the lifted $S^1$-actions on $L$  are given by $\hat H_j = H_j^{h} - \mu_j T$,  where $H_j^h$ denotes the horizontal lift of $H_j$ with respect to the Hermitian connection $\nabla$ on $L$, and $T$ is the generator of the natural (diagonal) $S^1$-action on the fibres of $L$. The induced infinitesimal $S^1$-actions on a smooth section $s\in \Gamma(L^{\otimes k})$ are therefore
\begin{equation}\label{induced-action}
A_{H_j}^{(k)} (s)= \nabla_{H_j} s + i k \mu_j s.
\end{equation}
Let  $\lambda_{i1}^{(k)}$ be (a real) eigenvalue for $(-iA_{H_1}^{(k)})_{H^0(M,L^{\otimes k})}$, corresponding to an eigenvector $s_i^{(k)} \in H^0(M,L^{\otimes k})$. By \eqref{induced-action},
\begin{equation*}
\lambda_{i1}^{(k)} |s_i^{(k)}|^2_h = -\frac{i}{2} (d|s_i^{(k)}|_h^2)(H_1) - \frac{1}{2} (d|s_i^{(k)}|_h^2)(JH_1) + k\mu_1 |s_i^{(k)}|^2_h,
\end{equation*}
showing that $\lambda_{i1}^{(k)} >0$  (as we have chosen the lifted action of $G$ so that $\mu_1$  is positive on $M$).  It thus follows that $\Phi= x_1^p x_j^q$ is smooth on $P=\mu(M)$ and on the convex hull of each $P_k$. Let
\begin{equation*}
w^{p,q}_{H_1,H_j}(L^{\otimes k})  :=  \nu_k(x_1^px_j^q)= {\rm trace} \Big(-\frac{i}{k} A_{H_1}^{(k)}\Big)^{p} \Big(-\frac{i}{k}A_{H_j}^{(k)}\Big)^{q}.
\end{equation*}
As noticed in the proof of \cite[Prop.~4.4]{BW},  the Bergman density asymptotic expansion (see e.g. \cite{marinescu-ma})  implies that
\begin{equation}
w^{p,q}_{H_1,H_j}(L^{\otimes k})  \simeq \frac{1}{(2\pi)^m}\sum_{r=0}^{\infty} b_r^{p,q}(H_1,H_j) k^{m -r},
\end{equation}
where, according to  \eqref{b0-general},  the first term is
\begin{equation}\label{b0}
b_0^{p,q}(H_1,H_j) = \int_M \mu_1^p \mu_j^q v_{\omega}.
\end{equation}
For the second term, we expect in general
\begin{equation}\label{b1}
\begin{split}
b_1^{p,q}(H_1,H_j) =& \frac{1}{4}\int_M s_{\omega} \mu_1^p \mu_j^q v_{\omega}  + \frac{pq}{2} \int_M  \mu_1^{p-1} \mu_j^{q-1}g(H_1, H_j) v_{\omega} \\
                         &+ \frac{p(p-1)}{4}\int_M \mu_1^{p-2} \mu_j^{q}g(H_1,H_1)v_{\omega} \\
                         & + \frac{q(q-1)}{4} \int_M \mu_1^{p} \mu_2^{q-2}g(H_j, H_j) v_{\omega},
\end{split}
\end{equation}
where $g(\cdot, \cdot)=\omega(\cdot, J\cdot)$ is the corresponding K\"ahler metric and $s_{\omega}$ is its scalar curvature.

\begin{rem}
To the best of our knowledge,   \eqref{b1} is established only  when $p, q\in\{0,1\}$, see \cite[Prop.~8.9.2]{gauduchon-book}. It is plausible  that the argument extends for any $p, q \in \N$. In the case when $p <0$, one could try to use an approximation argument as in the proof of \cite[Prop. 4.4]{BW} (i.e.  approximate $x_1^{p}$ by  polynomials) in order to establish  \eqref{b1} for any $p$,  but we do not explore this idea in the present paper.
\end{rem}

Let us now consider the situation when the vector fields $K$ and $H$ in the definition of the Futaki invariant $\mathfrak{F}_{\Omega, K, a} (H)$ in Corollary~\ref{c:complex-futaki} are the generators $H_1$ and $H_j$ respectively, with  momenta $\mu_1=f_{K,\omega,a}$ and $\mu_j=h_{H,\omega, b}$.  Assuming that  \eqref{b1} holds true, it then follows from  \eqref{rough-futaki}, \eqref{c-complex} and \eqref{futaki-integral} that
\begin{equation}\label{futaki-algebraic}
\begin{split}
\frac{1}{4}\Big(b_0^{-2m-1,0}(K,H)\Big) \mathfrak{F}_{\Omega, K, a}(H) =&b_1^{-2m+1,1}(K,H)b_0^{-2m-1,0}(K,H) \\
                                                                                     &- b_0^{-2m-1,1}(K,H)b_1^{-2m+1,0}(K,H).
                                                                                     \end{split}\end{equation}

To further motivate the formula \eqref{futaki-algebraic}, we observe that it does hold true in the toric case: switching from the complex to the symplectic point of view, $(M,L)$ is then described in terms of the corresponding  compact  Delzant convex polytope $\Delta \subset \mathfrak{t}^*$ (where $\mathfrak{t}$ is the Lie algebra of the $m$-dimensional torus $\T$) with labels  ${\bf L}=\{ L_j(\mu)= \langle u_j, \mu\rangle + \lambda_j , j=1, \ldots, d\}$ where the $u_j$ belong to the lattice $\Lambda \subset \mathfrak{t}$ of circle subgroups of $\T$. The fact that $M$ is a smooth polarized variety  tells us  that the vertices of $\Delta$ belong to the dual lattice $\Lambda^* \subset \mathfrak{t}^*$, and at each vertex of $\Delta$,  the adjacent normals span $\Lambda$. Taking any generators of $\Lambda$ as a basis of $\mathfrak{t}$  identifies $\Lambda$ and $\Lambda^*$ with $\Z^m$ and introduces a natural euclidean measure $d\mu$ on $\mathfrak{t}^*\cong \R^m$.
A central fact in the theory of algebraic toric varieties is that the weight decomposition of $H^0(X, L^k)$ with respect to the complexified  torus $\T^c$ is isomorphic to
$P_k= \{\mu \in k\Delta \cap\Z^m \}$ with  the weights identified with corresponding elements of $\Z^m$.  On the other hand, for any continuous function $\Phi$ on $\mathfrak{t}^*$, we have for $k$  large enough (see \cite{GS}):
$$\sum_{\lambda \in k\Delta \cap \Z^m} \Phi(\lambda/k) = k^m \int_{\Delta} \Phi d\mu + \frac{k^{m-1}}{2}\int_{\partial \Delta} \Phi d\sigma + O(k^{m-2}).$$
This formula implies that
\begin{equation*}
b_0^{p,q}(H_1,H_j) = (2\pi)^m \int_\Delta \mu_1^p \mu_j^q d\mu, \ \  b_1^{p,q}(H_1, H_j) = \frac{1}{2} (2\pi)^m \int_{\partial \Delta} \mu_1^p \mu_j^q d\sigma.
\end{equation*}
Letting  $\mu_1=f_{K, \omega, a}$ and  $\mu_2=h_{H, \omega, b}$ be the  affine functions on $\tor^*$ corresponding to the vector fields $K$ and $H$, it follows  that \eqref{futaki-algebraic}, suitably manipulated,  is  equal to $(2\pi)^m$ times $\mathfrak{F}_{(\Delta, L, f_{K, \omega, a})} (h_{H, \omega, b})$ (see Sect.~\ref{s:toric} for the definition), which in turn, by the results in Sect.~\ref{s:toric}, equals $\mathfrak{F}_{\Omega, K, a}(H)$.

\smallskip

The quantized version of the Futaki invariant discussed above  would lead to a general notion of $K$-polystability  (similar to \cite{Do-02} in the case $K=0$) which will be relevant to the existence of Einstein--Maxwell metrics in the conformal classes of K\"ahler metrics in $c_1(L)$.  In the toric case, using the  toric degenerations and the computation from \cite{Do-02}, one can
obtain the conclusion that the $K$-polystability of $(\Delta, {\bf L}, f_{K, \omega, a}))$ over {\it rational} PL convex functions is a necessary condition for the existence of an Einstein--Maxwell metric in $c_1(L)$ (associated to a choice of $K$ and $a$). Note that one might also define $K$-polystability for special degenerations (in the terminology of Tian~\cite{tian}), which are also $G$-equivariant. It will be interesting to see even in this special case  whether the existence of a conformally K\"ahler,  Einstein--Maxwell metric associated to the triple $(\Omega, K, a)$ would imply that on the central fibre $(M_0, J_0, \Omega_0)$, the Futaki invariant $\mathcal{F}_{\Omega_0, K,a}(H_0)\ge 0$,  where $H_0$ is the vector field corresponding to the circle action of the induced $\C^{\times}$-action on $M_0$. We will not pursue these matters here.

\subsection{Characterization of the conformally-K\"ahler, Einstein metrics}\label{s:conformally-einstein} It is well-known since the work of Derzinski~\cite{De} that there exists an Einstein metric in the conformal class of K\"ahler metric on a $4$-dimensional manifold if and only if the (conformal) Bach tensor vanishes.  For the examples  discussed in Sect.~\ref{s:examples}, the vanishing of the Bach tensor is expressed in \cite{ACG,ambitoric1,ambitoric2, LeB} in terms of an additional quadratic relation between the coefficients of the polynomials $A(z)$ and $B(z)$. It would be natural to try to extend such a characterization in general (and possibly  to higher dimensions too). As a first step in this direction, we notice the following
\begin{lemma}\label{Einstein-local} Let $\tilde g = \frac{1}{f^2}g$ be a conformally-K\"ahler, Einstein--Maxwell metric on a connected $4$-dimensional manifold $(M,J)$. Then the following conditions are equivalent
\begin{enumerate}
\item[\rm (i)] $g$ is an extremal K\"ahler metric and $f$ is a multiple of the scalar curvature $s_g$ of $g$;
\item[\rm (ii)] $\tilde g$ is either Einstein, or is a cscK metric homothetic to $g$.
\end{enumerate}
\end{lemma}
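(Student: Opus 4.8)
The plan is to prove the two implications of the lemma separately; the substance is a dictionary between the K\"ahler data $(g,s_g)$ and the Hermitian data $(\tilde g,\rho_0^{\tilde g},s_{\tilde g})$, into which Derdzinski's results on K\"ahler surfaces feed directly. For (ii)$\Rightarrow$(i): if $\tilde g$ is cscK and homothetic to $g$, then $f$ is a positive constant, so $g=f^2\tilde g$ is itself cscK --- in particular extremal --- with $s_g$ a nonzero constant, and $f$ is then a constant multiple of $s_g$; if $\tilde g$ is Einstein, it is a conformally-K\"ahler Einstein metric on the $4$-manifold $(M,J)$, and Derdzinski's theorem \cite{De} (already used in the proof of Theorem~\ref{wpp-classification}) says that its K\"ahler representative $g=f^2\tilde g$ is extremal and $f$ is a constant multiple of $s_g$. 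Here we exclude the degenerate case $s_g\equiv 0$ (where $g$ is scalar-flat K\"ahler), which is incompatible with (i) since $f$ is positive; we assume $s_g\not\equiv 0$ throughout.

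For (i)$\Rightarrow$(ii), suppose $g$ is extremal and $f=\lambda s_g$ with $\lambda\neq 0$, so that $s_g$ is nowhere zero. If $f$ is constant then so is $s_g$; then $g$ is cscK and $\tilde g=\tfrac1{f^2}g$ is a cscK metric homothetic to $g$, which is the second alternative of (ii). Assume henceforth that $f$ is non-constant. Since $\omega^\sharp$ is invertible, \eqref{EM} shows that $\tilde g$ is Einstein precisely when $\psi:=\tfrac1{f^2}\rho_0^{\tilde g}$ vanishes identically; and since $s_{\tilde g}$ is constant, \eqref{second} shows that $\psi$ is co-closed, hence --- being also an anti-self-dual $(1,1)$-form in real dimension $4$ --- harmonic for $\tilde g$. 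As $M$ is only assumed connected, the remaining argument must be local, and its goal is to establish $\psi\equiv 0$.

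The route I would take goes through the Bach tensor. Since the Weyl tensor is conformally invariant and $g$ is a K\"ahler surface, $W^+_g$ is degenerate with $|W^+_g|^2$ a fixed multiple of $s_g^2$; moreover $\tilde g=\lambda^{-2}(s_g^{-2}g)$ is a constant multiple of $s_g^{-2}g$, so Derdzinski's theorem \cite{De} asserts that $\tilde g$ is Einstein if and only if the Bach tensor $B^g$ vanishes. It therefore suffices to prove $B^g\equiv 0$. The key step is to substitute extremality of $g$ --- equivalently, that $D^g ds_g$ is $J$-invariant, i.e. that $J\,{\rm grad}_g s_g$ is a Killing field of $g$ --- together with $f=\lambda s_g$ into Derdzinski's explicit formula for the Bach tensor of a K\"ahler surface, and to simplify the resulting expression using \eqref{yamabe}, which for $m=2$ rewrites $s_{\tilde g}$ in terms of $s_g$, $\Delta_g s_g$ and $g(ds_g,ds_g)$. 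The outcome I expect is that $B^g$ becomes a nowhere-vanishing function times a tensor that is forced to vanish exactly because $s_{\tilde g}$ is constant, so that $B^g\equiv 0$ and $\tilde g$ is Einstein.

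The main obstacle is precisely this last computation: the reduction of Derdzinski's Bach-tensor formula under the two extra hypotheses, and the recognition that the constancy of $s_{\tilde g}$ is exactly what makes it vanish. An alternative that avoids the Bach tensor would work directly with $\psi$: starting from the conformal-change formula $\rho_0^{\tilde g}=\rho_0^g+\tfrac{2}{s_g}\big[(D^g ds_g)(J\cdot,\cdot)\big]_0$ (from \cite[1.161]{besse} together with extremality of $g$), and using the contracted second Bianchi identity on the K\"ahler surface $g$ and the constancy of $s_{\tilde g}$, one would derive a pointwise identity for the harmonic anti-self-dual form $\psi$ that forces $\psi\equiv 0$, and hence, via \eqref{EM}, that $\tilde g$ is Einstein.
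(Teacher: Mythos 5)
Your treatment of (ii)$\Rightarrow$(i) is fine (the paper likewise delegates it to \cite{De}, and your exclusion of the scalar-flat edge case is a reasonable caveat about the statement itself), and you have correctly located the right circle of ideas for (i)$\Rightarrow$(ii): the Bach tensor and Derdzinski's theorem, plus the correct observation that, $M$ being merely connected, the harmonicity of $\psi=\frac{1}{f^2}\rho_0^{\tilde g}$ gives nothing by itself. But the step you defer --- ``substitute extremality and $f=\lambda s_g$ into Derdzinski's Bach formula, simplify with \eqref{yamabe}, and the constancy of $s_{\tilde g}$ should make it vanish'' --- is exactly the content of the lemma, and the mechanism you expect is not the one that works. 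The constancy of $s_{\tilde g}$ is a single scalar equation in $s_g$, $\Delta_g s_g$ and $|ds_g|^2$, while the Bach tensor of a K\"ahler surface involves the full tensors ${\rm Ric}_0$ and ${\rm Hess}(s_g)$; no pointwise substitution can force a trace-free symmetric $2$-tensor to vanish from one scalar identity. Indeed, what the paper obtains is not $B\equiv 0$ by simplification: since $\tilde g$ is homothetic to $s_g^{-2}g$ it satisfies $\delta^{\tilde g}W^+=0$, so the Bach tensor reduces to an algebraic contraction of $W^+$ with the trace-free Ricci, and the explicit $W^+$ of a K\"ahler surface together with the $J$-invariance of ${\rm Ric}^{\tilde g}$ gives $B^{\tilde g}=\frac{s_g^3}{12}\,{\rm Ric}_0^{\tilde g}$ --- manifestly not zero on the face of it.

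The vanishing then requires three further steps, none of which appears in your proposal. First, two differential identities: $\delta^{\tilde g}B^{\tilde g}=0$ (the Bach tensor is the gradient of the Weyl functional) and $\delta^{\tilde g}{\rm Ric}_0^{\tilde g}=0$ (contracted Bianchi identity together with $s_{\tilde g}={\rm const}$ --- this, and not a pointwise simplification, is where the Einstein--Maxwell hypothesis enters); combined with $B^{\tilde g}=\frac{s_g^3}{12}{\rm Ric}_0^{\tilde g}$ they yield ${\rm Ric}_0^{\tilde g}(ds_g^{\sharp})=0$. Second, an algebraic argument: a $J$-invariant trace-free symmetric endomorphism has eigenvalues $(\lambda,\lambda,-\lambda,-\lambda)$, so $(r_0^{\tilde g})^2=\frac14|{\rm Ric}_0^{\tilde g}|^2_{\tilde g}\,{\rm Id}$, whence $|{\rm Ric}_0^{\tilde g}|^2_{\tilde g}\,ds_g\equiv 0$. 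Third, a density argument: $s_g$ is a (nonconstant) Killing potential, so $ds_g\neq 0$ on a dense open set, forcing ${\rm Ric}_0^{\tilde g}\equiv 0$ everywhere and hence, by \eqref{EM}, that $\tilde g$ is Einstein. Your alternative route (``derive a pointwise identity for the harmonic anti-self-dual form $\psi$ that forces $\psi\equiv 0$'') is likewise only a stated hope, and for the same reason cannot succeed pointwise: some divergence-type identity exploiting $s_{\tilde g}={\rm const}$, of the kind just described, is unavoidable. As it stands, the proposal identifies the landscape but leaves the proof's essential argument unproved.
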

\begin{proof}
${\rm (ii)\Rightarrow (i)}$ This is established in \cite{De}.

${\rm (i)\Rightarrow (ii)}$  Let $g$ be an extremal K\"ahler metric with non-vanishing scalar curvature $s_g$. Note that, as $s_g$ is a Killing potential by definition, $s_g$ is either constant, or $ds_g \neq 0$ on a dense open subset of $M$.  We are going to show that in the latter case $\tilde g$ must be Einstein.

The metric $\tilde g = \frac{1}{s_g^2} g$ satisfies $\delta^{\tilde g} W^+=0$, where $W^+$ is the (conformally invariant) selfdual Weyl tensor of $g$ (with respect to the canonical orientation of $(M,J)$). The Bach tensor of $\tilde g$ is then given by \cite{De} (see also \cite{ACG}):
$$B^{\tilde g}_{X,Y} = \frac{1}{2}\sum_{i=1}^4{\tilde g}(W^+_{e_i, X} r^{\tilde g}_0(e_i), Y).$$
Using the fact that the traceless Ricci endomorphism $r^{\tilde g}_0$ is $J$-invariant and the selfdual Weyl tensor of the K\"ahler surface $(g, J)$ is (see e.g. \cite{De})
$$g(W^+_{X,Y} Z, T)= \frac{s_g}{12} (\omega \otimes \omega)_0(X,Y,Z,T),$$
where $(\omega \otimes \omega)_0$ stands for the trace-free part of $\omega \otimes \omega$ viewed as an endomorphism of the bundle of selfdual 2-forms $\Lambda^+(M)$ via the metric $g$, one gets
$$B^{\tilde g}_{X,Y} = \frac{s_g^3}{12}{\rm Ric}_0^{\tilde g}(X,Y).$$
Since the Bach tensor is a gradient of the Weyl functional (see \cite{besse}), one has $\delta^{\tilde g} B^{\tilde g}=0$ whereas $\delta^{\tilde g} {\rm Ric}_0^{\tilde g}=0$ by the Ricci identity (recall that $\tilde g$ has constant scalar curvature by the Einstein--Maxwell assumption). It thus follows that
\begin{equation}\label{local}
{r}_{0}^{\tilde g}(d s_g^{\sharp})=0.
\end{equation}
Since ${r}_{0}^{\tilde g}$ is $J$-invariant and trace-free,  it has at any given point real eigenvalues $(\lambda, \lambda, -\lambda, -\lambda)$, showing that $(r^{\tilde g}_0)^2 = \frac{|{\rm Ric}^{\tilde g}_0|^2_{\tilde g}}{4}{\rm Id}$. Applying $r_0^{\tilde g}$ to \eqref{local}, we conclude $|{\rm Ric}^{\tilde g}_0|^2_{\tilde g} ds_g \equiv 0$.  This shows that ${\rm Ric}^{\tilde g}_0$ vanishes on a dense open subset, therefore everywhere on $M$.
\end{proof}

\subsection{Computing the Futaki invariant}\label{s:computing-futaki}

\subsubsection{$f$-extremal metrics and $f$-extremal vector field}\label{s:extremal} It is a well-known fact~\cite{FM} that the vanishing of the usual Futaki invariant associated to a K\"ahler class  $\Omega$ on $(M,J)$ is  equivalent to the vanishing of the so-called {\it extremal vector field} $Z_{\Omega, G}$ associated to some (and hence any) maximal torus $G\subset {\rm Aut}_r(M,J)$. One can easily extend this notion to the setting of  Sect.~\ref{s:futaki}.

Consider first the symplectic point of view (Sect.~\ref{s:symplectic-futaki}), where the symplectic form $\omega$,  the torus $G\subset {\rm Ham}(M,\omega)$,  and the positive hamiltonian function $f$  are fixed. It follows from Corollary~\ref{c:symplectic-futaki} that the projection $z_{G,\omega,f}$ of $s_{J,f}$ with respect to $\langle \cdot, \cdot \rangle_f$ to the finite dimensional vector space of hamiltonians for the elements of $\mathfrak{g}= {\rm Lie}(G)$ is independent of the choice of $J\in \mathcal{C}_{\omega}^G$.  We denote by $Z_{G,\omega,f} = {\rm grad}_{\omega} (z_{G, \omega, f}) \in \mathfrak{g}$ the corresponding vector field on $M$.
\begin{defn} The  function $z_{G,\omega,f}$ and the hamiltonian vector field $Z_{G,\omega,f}$ it determines  will be referred to as {\it $f$-extremal potential} and {\it $f$-extremal vector field}, respectively. A K\"ahler metric $g_J$ corresponding to  $J\in \mathcal{C}^G_{\omega}$ will be called {\it $f$-extremal} if the scalar curvature $s_{J,f}$ of $\frac{1}{f^2}g_J$ is a hamiltonian for an element in $\mathfrak{g}$, or equivalently, if  $s_{J,f}=z_{G, \omega, f}$.
\end{defn}
It is then clear from Corollary~\ref{c:symplectic-futaki} that the Futaki invariant $\mathfrak{F}^G_{\omega, f}$ vanishes if and only if $Z_{G,\omega, f}=0$~\footnote{If $\mathfrak{g}$ is identified with the space of  corresponding hamiltonians $h$ satisfying $\langle h, 1 \rangle_f =0$,  then $\langle \cdot, \cdot \rangle_f$ induces an euclidean product on $\mathfrak{g}$ with respect to which  $Z_{G, \omega, f}\in \mathfrak{g}$ and $\mathfrak{F}^G_{\omega, f} \in \mathfrak{g}^*$ are dual.}, in which case $z_{G,\omega, f} = c_{\omega, f}$. Theorem~\ref{thm:moment-map-setting}  and Remark~\ref{AK-futaki} (or, equivalently, formula \eqref{long-computation} with $h=s_{J,f}$) imply that $g_J$ is $f$-extremal if and only if it is a critical point of the functional $\int_M \frac{s_{J,f}^2}{f^{2m+1}} v_{\omega}$ over the space $\mathcal{AK}_{\omega}^G$, see also \cite{lejmi}.

The above considerations have a natural counterpart in the setting of Sect.~\ref{s:futaki-kahler-class} where the complex structure $J$, the torus $G\subset {\rm Aut}_r(M,J)$, a vector field $K\in \mathfrak{g}$ and a normalization constant $a>0$ are fixed, and $\omega$ varies within the space $\mathcal{K}_{\Omega}^G$ of $G$-invariant K\"ahler metrics in a given K\"ahler class $\Omega$. To this end, a simple application of the $G$-equivariant Moser lemma (compare with \cite[Rem.~3.3]{lejmi}) shows that the $f_{\omega, K, a}$-extremal vector field $Z_{G, \omega, f_{K, \omega, a}}$ of $(M,\omega)$ does not depend on the choice of $\omega \in \mathcal{K}_{\Omega}^G$, and therefore gives rise to a well-defined real holomorphic vector field $Z_{\Omega, G, K, a}$ associated to $(\Omega, G, K, a)$. By the previous discussion,   $Z_{\Omega, G, K, a}$ vanishes if and only if the corresponding Futaki invariant $\mathfrak{F}^G_{\Omega, K, a}$ is zero.

\subsubsection{The Futaki invariant of a weighted projective plane}

To illustrate the notions introduced in Sect.~\ref{s:extremal}, let us consider the case when $(M, J)$  is a weighted projective plane $\mathbb{C}P^2_{a_0,a_1,a_2}$ (see Sect.~\ref{s:wpp}) with $G=\T$ being the (maximal) $2$-dimensional torus  corresponding to the diagonal action of $(\C^{\times})^3$ in homogeneous coordinates. As noticed in Remark~\ref{r:wpp}, up to a homothety of the symplectic form, one can alternatively describe $\mathbb{C}P^2_{a_0,a_1,a_2}$ as a toric symplectic orbifold $(M,\omega)$ corresponding to the labelled standard simplex
$(\Delta, {\bf L}^{\bf a})=\{L_0^{\bf a}= a_1a_2(1-\mu_1-\mu_2)>0,  L_1^{\bf a}=a_0a_2 \mu_1 >0,  L_2^{\bf a}=a_0a_1 \mu_2 >0\}$. To account for the homothety factors of the symplectic form, we shall consider  more generally  labellings of the form ${\bf L}^{\bf r}=\{L_0^{\bf r}=\frac{1}{r_0}(1-\mu_1 -\mu_2), L^{\bf r}_1=\frac{1}{r_1} \mu_1, L^{\bf r}_2=\frac{1}{r_2}\mu_2\}$ with $r_i = a_i/\lambda$, where $\lambda >0$ is a real number. We notice that for any choice of positive rational numbers $r_0\ge r_1\ge r_2>0$, $(\Delta, {\bf L}^{\bf r})$ parametrizes a weighted projective plane for some weights ${\bf a}=(a_0,a_1, a_2)$. We then have
\begin{prop}\label{wpp-futaki} Let $g_{BF}$ be the Bochner-flat K\"ahler metric on $\C P^2_{a_0, a_1, a_2}$ with $a_0\ge a_1\ge a_2 >0$, viewed as an $\omega$-compatible toric K\"ahler metric associated to the standard simplex $(\Delta, {\bf L}^{\bf a})$ and $f$ be (the pull-back by the momentum map of) a positive affine-linear function on $\Delta$. Then,
$\tilde g_{BF,f}=\frac{1}{f^2}g_{BF}$ is $f$-extremal. In particular, the Futaki invariant $\mathfrak{F}^{\T}_{\omega, f}$ vanishes if and and only if $\tilde g_{BF,f}$ is Einstein--Maxwell, i.e. if and only if  $a_0 < a_1 + a_2$ and $f= \lambda s_{BF}$, where $s_{BF}$ is the scalar curvature of $g_{BF}$.
\end{prop}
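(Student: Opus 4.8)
The plan is to prove the two assertions separately, with the first (that $\tilde g_{BF,f}$ is always $f$-extremal) doing most of the work. For the first part, recall that by Proposition~\ref{wpp} (and the discussion preceding it) the Bochner-flat metric $g_{BF}$ on $\C P^2_{a_0,a_1,a_2}$ is extremal, and by Lemma~\ref{Einstein-local}$\,$(ii)$\Rightarrow$(i) — or directly from \cite{De} — its scalar curvature $s_{BF}$ is (a multiple of) a Killing potential; more relevantly, since $g_{BF}$ is toric and extremal, $s_{BF}$ is the pull-back via the momentum map of an affine-linear function on $\Delta$. Thus $s_{BF}$, viewed as a function of $(\mu_1,\mu_2)$, is affine. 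Now for any positive affine function $f$ on $\Delta$, formula \eqref{sg} gives
\begin{equation*}
\frac{s_{J,f}}{f^{2m+1}} = -\sum_{i,j=1}^{m}\Big(\frac{1}{f^{2m-1}}{\bf H}^{u}_{ij}\Big)_{,ij},
\end{equation*}
where here $m=2$ and ${\bf H}^u$ is the inverse Hessian of the symplectic potential of $g_{BF}$. The key computation is to expand this using \eqref{computation}: the leading term is $\frac{1}{f^{3}}\sum_{i,j}{\bf H}^u_{ij,ij} = -\frac{s_{BF}}{f^{3}}$, and the correction terms involve $\sum{\bf H}^u_{ij}f_{,i}f_{,j}$ and $\sum{\bf H}^u_{ij,j}f_{,i}$. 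I would then argue that, because $f$ is affine and $g_{BF}$ is toric extremal (so that in suitable coordinates ${\bf H}^u$ has a very rigid polynomial form — indeed the Bochner-flat toric metric on a simplex is essentially of ``orthotoric'' or product-like shape, see \cite{Bryant,DG,ambitoric1}), the resulting expression $s_{J,f}$ is itself affine in $(\mu_1,\mu_2)$. The cleanest route is probably to invoke Lemma~\ref{Einstein-local} in the following guise: $g_{BF}$ is extremal with $s_{BF}$ proportional to a Killing potential, hence for \emph{some} constant $\lambda_0$ the metric $\frac{1}{(\lambda_0 s_{BF})^2}g_{BF}$ is Einstein, hence conformally K\"ahler, Einstein--Maxwell with a \emph{constant} scalar curvature; but then by the $f$-dependence built into \eqref{sg} one checks that replacing $\lambda_0 s_{BF}$ by an arbitrary positive affine $f$ only changes $s_{J,f}$ by an affine function of the momenta — this is exactly the statement that $s_{J,f}$ lies in ${\rm Aff}(\tor^*)$, i.e. is a Killing potential, i.e. $\tilde g_{BF,f}$ is $f$-extremal by definition.

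The main obstacle is making this last step rigorous: one must show that for the Bochner-flat symplectic potential $u$ on the simplex, the quantity $-\sum_{i,j}\big(f^{-3}{\bf H}^u_{ij}\big)_{,ij}\cdot f^{5}$ is affine in $\mu$ for \emph{every} positive affine $f$, not just for the distinguished one. I would handle this by writing ${\bf H}^{BF}:={\bf H}^u$ explicitly (it is a quadratic matrix polynomial in $\mu_1,\mu_2$, being the inverse Hessian of the known Bochner-flat potential — see \cite{Bryant,DG}), and then carrying out the differentiation in \eqref{computation}: since $f_{,ij}=0$ and ${\bf H}^{BF}$ has entries of degree $\le 2$, both $\sum{\bf H}^{BF}_{ij,ij}$ and $\sum{\bf H}^{BF}_{ij,j}f_{,i}$ are at most degree $1$ and $\sum{\bf H}^{BF}_{ij}f_{,i}f_{,j}$ is at most degree $2$; combined with the known fact $\sum{\bf H}^{BF}_{ij,ij}=-s_{BF}$ affine, a short degree bookkeeping shows the full combination, after multiplication by $f^5$ and division by $f^{2m+1}=f^5$, is affine. (Alternatively, and perhaps more conceptually, one notes that the Abreu-type operator $u\mapsto -\sum(f^{-3}{\bf H}^u_{ij})_{,ij}\,f^5$ differs from the ordinary Abreu operator by lower-order terms that only involve $f$ and first derivatives of ${\bf H}^u$, and these are manifestly affine when ${\bf H}^u$ is quadratic.)

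For the second part, once $\tilde g_{BF,f}$ is known to be $f$-extremal, Corollary~\ref{c:symplectic-futaki} (see also the footnote in Sect.~\ref{s:extremal}) tells us that the Futaki invariant $\mathfrak{F}^{\T}_{\omega,f}$ vanishes if and only if the $f$-extremal vector field $Z_{\T,\omega,f}$ vanishes, if and only if $s_{J,f}=z_{\T,\omega,f}$ is the \emph{constant} $c_{\omega,f}$, i.e. if and only if $\tilde g_{BF,f}$ has constant scalar curvature. Since $\tilde g_{BF,f}$ is conformally K\"ahler with conformal K\"ahler metric $g_{BF}$ already extremal and with $s_{BF}\not\equiv 0$ (on the locus where the weights satisfy $a_0<a_1+a_2$, by \cite{Bryant,DG}), Theorem~\ref{wpp-classification} applies: a constant-scalar-curvature $\tilde g_{BF,f}$ must be Einstein, forcing $f$ to be a positive multiple of $s_{BF}$; conversely if $f=\lambda s_{BF}$ with $s_{BF}>0$ then $\tilde g_{BF,f}$ is Einstein by Proposition~\ref{wpp} and in particular Einstein--Maxwell, so $s_{J,f}$ is constant and $\mathfrak{F}^{\T}_{\omega,f}=0$. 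Finally, the positivity requirement on $f=\lambda s_{BF}$ is exactly the condition $s_{BF}>0$ everywhere, which by \cite{Bryant,DG} is equivalent to $a_0<a_1+a_2$. This closes the equivalence and completes the proof.
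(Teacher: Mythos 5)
Your second half (deducing the equivalence ``$\mathfrak{F}^{\T}_{\omega,f}=0$ $\Leftrightarrow$ $s_{J,f}$ constant $\Leftrightarrow$ Einstein--Maxwell'', then invoking Theorem~\ref{wpp-classification} and Proposition~\ref{wpp} to pin down $a_0<a_1+a_2$ and $f=\lambda s_{BF}$) is correct and is essentially what the paper does. The problem is the first and main step, the $f$-extremality of $\tilde g_{BF,f}$ for \emph{every} positive affine $f$: your argument rests on the claim that the inverse Hessian ${\bf H}^{u_{BF}}$ of the Bochner--flat potential has entries of degree $\le 2$ in $(\mu_1,\mu_2)$, and this is false in general. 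For mutually distinct weights the Bochner--flat metric is orthotoric, i.e.\ of regular ambitoric type with $q(z)\equiv 1$, $B=-A$ and $A=R$ of degree $4$; from \eqref{solution-regular} one of the entries is $\frac{A(x)-A(y)}{x-y}$, which already contains $\mu_1^3-2\mu_1\mu_2$ (with $\mu_1=x+y$, $\mu_2=xy$), so ${\bf H}^{u_{BF}}$ is cubic, not quadratic, in momenta. Moreover, even granting a quadratic ${\bf H}$, the degree bookkeeping would not suffice: expanding \eqref{computation} for $m=2$ gives
\begin{equation*}
s_{J,f} \;=\; -f^2\sum_{i,j}{\bf H}_{ij,ij}\;-\;12\sum_{i,j}{\bf H}_{ij}f_{,i}f_{,j}\;+\;6f\sum_{i,j}{\bf H}_{ij,j}f_{,i},
\end{equation*}
whose first term alone is $f^2 s_{BF}$, a cubic in momenta; affineness of the sum requires exact cancellation of the quadratic and cubic pieces, which is precisely the nontrivial algebraic fact to be proved and cannot follow from counting degrees. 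The same objection applies to your ``cleanest route'' via Lemma~\ref{Einstein-local}: the assertion that changing $\lambda_0 s_{BF}$ into an arbitrary positive affine $f$ ``only changes $s_{J,f}$ by an affine function'' is exactly the statement in question, not a consequence of \eqref{sg}.

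For comparison, the paper closes this gap as follows: by continuity in the labels it suffices to treat mutually distinct weights, where $g_{BF}$ is orthotoric; then $A$ and $B=-A$ satisfy the first two equations of \eqref{regular-EM} with $\rho\equiv 0$, and Remark~\ref{r:ambitoric-scalar} gives $s_{\tilde g}=-\{p,(p,R)^{(2)}\}(x,y)$, where $f=p(x,y)$ ($q\equiv1$). Since $\{p,(p,R)^{(2)}\}$ is a polynomial of degree $\le 2$ in one variable, its polarization is affine in $(\mu_1,\mu_2)=(x+y,xy)$, which is the required cancellation. If you want to avoid the ambitoric machinery you would have to reproduce this transvectant identity by hand; the continuity reduction to distinct weights (absent from your proposal) is also needed to use the orthotoric normal form.
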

\begin{proof} We first prove that $\tilde g_{BF}$ is $f$-extremal, or equivalently, that that the scalar curvature of $\tilde g_{BF,f}= \frac{1}{f^2} g_{BF}$ is a Killing potential. By using the explicit expression~\cite{abreu,Bryant}
$$u_{BF} = \frac{1}{2} \Big[\sum_{j=0}^{2}L_j^{\bf r}  \log L_j^{\bf r}  +  \Big(\sum_{j=0}^2L_{j}^{\bf r}\Big)\log\Big(\sum_{j=0}^2 L_{j}^{\bf r}\Big)\Big],$$
for the symplectic potential $u_{BF}$ of $g_{FB}$ and \eqref{sg},  our claim is equivalent to check  that
\begin{equation}\label{key-observation}
f^5 \Big[\sum_{i,j=1}^2 \Big( \frac{1}{f^3}({\bf H}^{u_{BF}})_{ij}\Big)_{,ij}\Big]
\end{equation}
is an affine function in $(\mu_1, \mu_2)$, where we recall ${\bf H}^{u_{BF}}= \big({\rm Hess}(u_{BF})\big)^{-1}$.

Alternatively,  as \eqref{key-observation} manifestly depends continuously on the labels ${\bf L}^{\bf r}$ (i.e. on the rational parameters ${\bf r}=(r_0, r_1, r_2)$), it is enough to establish that \eqref{key-observation} is affine for positive rational triples ${\bf r}=(r_0,r_1,r_2)$ with $r_i \neq r_j$ for $i\neq j$. In this case, the Bochner-flat metric is {\it orthotoric} \cite{ACGT2,Bryant},  i.e. $g_{BF}=g_+$ where $g_+$ is the K\"ahler metric given by \eqref{g-pm}, with $q(z)\equiv 1$, $A(z)=R(z)= -B(z)$,  where $R(z)$ is a polynomial of degree $4$ with $4$ distinct real roots. Note that in the notation of Sect.~\ref{s:ambitoric},  we have (using $q(z)=1$) $\mu_1= x+y, \mu_2= xy$ so that the affine function $f$ can be written as $f(x,y)= p_2xy + p_1(x+y) + p_0$ where $p(z)=p_2z^2 + 2p_1z + p_0$ is some element of $S^2$.  Now, as $A(z)$ and $B(z)$ satisfy the first two conditions of \eqref{regular-EM} with $\rho(z)=0$, by Remark~\ref{r:ambitoric-scalar} the scalar curvature of $\tilde g=\frac{1}{f^2}g_{+}$ is $s_{\tilde g} =-\{p, (p, R)^{(2)}\}(x,y)$, which is the polarization of a polynomial of degree $\le 2$, i.e. an affine function in momenta $(\mu_1, \mu_2)$.

According to the discussion in Sect.~\ref{s:extremal}, the scalar curvature of $\tilde g_{BF,f}=\frac{1}{f^2} g_{BF}$ is the $f$-extremal function of $(\Delta, {\bf L}^{\bf r},f)$. It is constant if and only if $\mathfrak{F}^{\T}_{\omega, f} = (2\pi)^2\mathfrak{F}_{(\Delta, {\bf L}^{\bf r}, f)}=0$, which holds if and only if  $\tilde g_{BF,f}$ is Einstein--Maxwell. This and Theorem~\ref{wpp-classification} conclude the proof. \end{proof}

\begin{rem} The arguments in the proof of Proposition~\ref{wpp-futaki} reveal an interesting  algebraic structure of the subset of the positive affine functions $f$ for which $\mathfrak{F}^{\T}_{\omega, f}=0$. Indeed, when the weights are
mutually  distinct, the latter subset is identified with the subspace $F \subset \Proj(S^2) \cong \R P^2$ of projective classes of polynomials $p(z)=p_2z^2 + 2p_1z + p_0$ for which $\{p, (p, R)^{(2)}\}$ is of degree $0$. Given a polynomial  $R(z)$ of degree $\le 4$, this realizes  $F$ as the intersection of two quadrics in $\R P^2$ (given by the coefficients before the $z^2$ and $z$ of $\{p, (p, R)^{(2)}\}(z)$). They always intersect at the point $[p(z)]=[R'''(z)]$ (which geometrically corresponds to taking $f= \lambda s_{BF}$), but they may also have other common points. For instance,  it is not difficult to check that  if the coefficients of $R(z)=\sum_{i=0}^4 a_i z^i$ satisfy $a_3^2a_0 = a_1^2a_4$,  then $[p(z)]= [-a_3z^2 + a_1]$ provides  a different solution. (One can show that this is essentially the only other point in the intersection.) However, according to Proposition~\ref{wpp-futaki}, in the latter case $R(z)$ cannot have $4$ real roots $\alpha_0<\alpha_1<\alpha_2<\alpha_3$ while at the same time $p(x,y)=-a_3xy + a_1$ be positive on the product of intervals $[\alpha_0,\alpha_1]\times [\alpha_1, \alpha_2]$. \end{rem}

\subsubsection{The Futaki invariant of $\C P^1 \times \C P^1$ and  the classification of toric Einstein--Maxwell metrics}

Let $M=\mathbb{C}P^1\times \mathbb{C}P^1$. Aside from the obvious constant scalar curvature K\"ahler product metrics
in each K\"ahler class, LeBrun~\cite{LeB0} gave an explicit construction of conformally K\"ahler, non-K\"ahler Einstein--Maxwell metrics on $(M,J)$. In terms of  Theorem~\ref{ambitoric-EM-classification}, the LeBrun metrics are ambitoric compactifications of the product Einstein--Maxwell ansatz  over a labelled parallelogram $(\Delta, {\bf L})$, with the positive affine function $f$ being such that the  affine line $f=0$ it determines is parallel to a pair of parallel facets of $\Delta$. On the other hand, we observed in  Remark~\ref{extension} that in this case the vanishing of the Futaki invariant $\mathfrak{F}_{\Delta, {\bf L}, f}$ is necessary and sufficient condition for the existence of conformally K\"ahler, Einstein--Maxwell metric with conformal factor $f$, so we are in position to use the theory from Sect.~\ref{s:toric} to give here a computer-assisted proof that the non-K\"ahler, conformally--K\"ahler,  Einstein--Maxwell metrics found in \cite{LeB0} are essentially the unique ones on $\C P^1 \times \C P^1$  for which  the  isometry group contains a $2$-dimensional torus.

\begin{prop}\label{thm:classification-product} Suppose $\tilde g = \frac{1}{f^2} g$ is a conformally K\"ahler,  Einstein--Maxwell metric on $(M,J)= \C P^1\times \C P^1$ which is not cscK, and  whose isometry group contains a  $2$-dimensional real torus. Then $\tilde g$ must be homothetically isometric  to one of the LeBrun Einstein--Maxwell metrics constructed in \cite{LeB0}. \end{prop}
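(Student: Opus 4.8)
The plan is to reduce to the toric set-up of Section~\ref{s:toric} and then run the classification scheme of Theorem~\ref{ambitoric-EM-classification} for a labelled square, using the fact that the vanishing of the Futaki invariant over a parallelogram with $f$ parallel to a pair of facets is both necessary and sufficient for existence (Remark~\ref{extension}). First I would observe that since $\tilde g=\frac1{f^2}g$ has a $2$-dimensional torus $\T$ in its isometry group and is assumed conformally K\"ahler but not cscK, the conformal factor $f$ is a non-constant Killing potential for a vector field $K\in{\rm Lie}(\T)$; by the discussion in Section~\ref{s:futaki} we may take $\T\subset {\rm Isom}(\tilde g)\cap {\rm Aut}_r(M,J)$ maximal, so $(M,\omega,\T)$ is a compact symplectic toric $4$-orbifold (in fact a manifold, $\C P^1\times\C P^1$) whose Delzant polytope $\Delta$ is a rectangle, and $f$ is (the pullback of) a positive affine function in momenta. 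By Theorem~\ref{ambitoric-EM-classification}, $(\Delta,{\bf L},f)$ must be $K$-polystable and $g$ must be an ambitoric compactification.

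Next I would narrow down which ambitoric type occurs. Since $\Delta$ is a parallelogram, the conic $\mathcal C(\Delta,f)$ of the proof of Theorem~\ref{ambitoric-EM-classification} is singular with $\boldsymbol 1$ at the singular point (Case 3), so $g$ is of product or Calabi type and the affine line $\{f=0\}$ is parallel to a pair of parallel facets of $\Delta$. In appropriate $(x,y)$-coordinates $f=f(x)$ is affine in $x$ alone; after an affine change and rescaling we may normalize, e.g.\ $f(x)=x+\alpha$, or $f(x)=x$ in the strictly non-constant normalization. The Calabi case would force one factor to be a $\C P^1$ carrying a metric of non-constant Gauss curvature times the fibre, but over $\C P^1\times\C P^1$ with $b_2=2$ the trapezoid degenerates to a rectangle and the Calabi ansatz reduces to the product one; so $g$ is a product ambitoric compactification, governed by polynomials $A(x)$ of degree $\le 4$ and $B(y)$ of degree $\le 2$ subject to \eqref{EM-parallelogram-normal} (i.e.\ $a_2=-b_2$, $a_1=0$ after normalization) together with the boundary conditions \eqref{ambitoric-boundary} on the rectangle $[\alpha_\1,\alpha_\2]\times[\beta_\1,\beta_\2]$.

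The heart of the argument is then a finite computation. As in the proof of Theorem~\ref{ambitoric-EM-classification}, Case~3, the four boundary conditions for $B$ (which has degree $2$) force $r_{\beta,\1}=-r_{\beta,\2}$ and determine $B$ uniquely; the four boundary conditions for $A$ together with the two constraints \eqref{EM-parallelogram-normal} produce one more relation among the parameters $r_{\alpha,k},r_{\beta,k}$, which one checks (using symbolic software, as announced in the introduction) is exactly equivalent to the vanishing of $\mathfrak F_{\Delta,{\bf L},f}$ on the two coordinate functions $\mu_1,\mu_2$. Hence a solution exists iff the Futaki invariant vanishes, and when it does the polynomials $A,B$ — and therefore $g$ and $\tilde g=\frac1{f^2}g$ — are determined, up to the residual torus-equivariant isometries and homothety, by the labelled rectangle and the line $\{f=0\}$; by Theorem~\ref{uniqueness} this solution is unique. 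Finally I would identify this family of solutions with LeBrun's explicit metrics: LeBrun's construction in \cite{LeB0} is, in the notation of Section~\ref{prod}, precisely the product Einstein--Maxwell ansatz \eqref{toric-product}–\eqref{EM-parallelogram} compactified over a rectangle with $\{f=0\}$ parallel to a facet, so the classification above shows $\tilde g$ must be homothetically isometric to one of them.

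The main obstacle I expect is the explicit verification that the overdetermined linear system for $A$ (five coefficients, four boundary equations plus the two relations $a_2=-b_2$, $a_1=0$) is consistent precisely when $\mathfrak F_{\Delta,{\bf L},f}(\mu_i)=0$ for $i=1,2$: keeping track of how the labels $r_{\alpha,k},r_{\beta,k}$ enter the boundary measure $d\sigma$ in \eqref{Donaldson-Futaki-1}, and checking that the two constraint sets have the same rank, is the computational core and the step where the cited symbolic-software calculation is actually used; everything else is bookkeeping with the ambitoric normal forms and an appeal to Theorems~\ref{toric-futaki}, \ref{uniqueness}, \ref{ambitoric-EM-classification} and Remark~\ref{extension}.
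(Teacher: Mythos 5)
There is a genuine gap at the pivotal step. You deduce, from the fact that $\Delta$ is a parallelogram and hence ``Case~3'' of Theorem~\ref{ambitoric-EM-classification} applies, that the affine line $\{f=0\}$ is parallel to a pair of parallel facets of $\Delta$. This does not follow. Writing affine functions as $\varphi=a+b\mu_1+c\mu_2$, the pencil of conics through the four points $[1\mp\mu_1],[1\mp\mu_2]$ determined by ${\bf L}^{\lambda}$ is spanned by $a^2-b^2-c^2$ and $bc$, and the orthogonality condition on $\boldsymbol{1}$ and $f=f_0+f_1\mu_1+f_2\mu_2$ reads $\alpha f_0=0$; since $f_0=f(0,0)>0$, one gets $\mathcal{C}(\Delta,f)=\{bc=0\}$, singular with vertex $\boldsymbol{1}$, for \emph{every} positive affine $f$, whereas $f$ lying on this conic is the extra condition $f_1f_2=0$. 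So the parallelism you need is precisely what must be proved, not a consequence of the case distinction. Ruling out conformal factors with $f_1f_2\neq 0$ is the heart of the paper's proof: the Futaki invariant $\mathfrak{F}_{\Delta,{\bf L}^{\lambda},f}$ is computed explicitly on $h=\mu_1$ and $h=\mu_2$ for a general $f$, using the product of round metrics as reference (legitimate because the invariant is independent of the choice of $J\in\mathcal{C}^{\T}_{\omega_\lambda}$), and its vanishing is shown, via the positivity/discriminant analysis of the cubic $Q(\xi,\eta)$ in $\xi=f_1^2,\eta=f_2^2$ (formula \eqref{six}), to force $f_1=0$ or $f_2=0$, and moreover to force $\lambda\in(0,\tfrac12)\cup(2,\infty)$ with exactly two admissible potentials $f^{\lambda}_{\pm}$ up to scale. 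Your proposed ``computational core'' (consistency of the overdetermined linear system for $A$ if and only if $\mathfrak{F}(\mu_i)=0$) is essentially Remark~\ref{extension}, is only relevant once $f$ is known to depend on one momentum variable, and provides no substitute for this exclusion step.

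A secondary gap: the hypothesis only gives a $2$-torus in ${\rm Isom}(\tilde g)$, not in ${\rm Aut}_r(M,J)$, so before invoking the toric machinery you must show the torus preserves $J$. The paper does this by noting that otherwise $\tilde g$ would carry a continuous family of compatible complex structures and hence be anti-self-dual, forcing the conformal K\"ahler metric to be scalar-flat and contradicting $c_1(\C P^1\times\C P^1)\cdot[\omega]>0$; your appeal to ``the discussion in Section~\ref{s:futaki}'' does not supply this. Once these two points are repaired, the remainder of your outline (product-type ambitoric compactification over the labelled square, boundary conditions, Theorem~\ref{uniqueness}, and identification with the metrics of \cite{LeB0}) coincides with the paper's endgame.
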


\begin{proof} By assumption,  $\tilde g$ is invariant under the action of a $2$-dimensional torus $\T$ in the connected component of identity in its isometry group. This action must preserve the complex structure $J$ as otherwise $\tilde g$ would admit a continuous family of positively oriented compatible complex structures (the pull-backs of $J$ by the isometric action of $\T$) and,  therefore,  $\tilde g$ must be anti-self-dual (see e.g. \cite{pontecorvo}). This is impossible, as the conformal K\"ahler metric $g$ must then be of zero scalar curvature (see e.g.~\cite{D}), i.e. $c_1(M,J)\cdot [\omega]=0$,  contradicting the fact that $\C P^1 \times \C P^1$ is a Fano complex surface. Furthermore, as $\C P^1 \times \C P^1$ is simply connected, ${\rm Aut}(M,J)= {\rm Aut}_r(M,J)$. Thus the metric $\tilde g$ is toric, in the sense given in the beginning of Sect.~\ref{s:toric}.

Fix a symplectic form $\omega_{S^2}$ of volume $4\pi$ on $S^2 = \C P^1$,  and an $S^1$ hamiltonian action with momentum image the interval $[-1, 1]$. By rescaling, we can assume $g$ is compatible to the symplectic form $\omega_\lambda= \omega_{S^2} \oplus \lambda \omega_{S^2}$, for some  $\lambda>0$.  The Delzant image of $(M,\omega_\lambda)$ is the Delzant rectangle  $\Delta^{\lambda} = [-1,1]\times [-\lambda, \lambda]$  with the standard labelling ${\bf L}=\{1-x , 1+ x, \lambda-y, \lambda + y \}$. Using a homothety of the $y$-variable, we can send $(\Delta^{\lambda}, {\bf L})$ to the labelled square $(\Delta^{1}, {\bf L}^{\lambda})$ where $\Delta=\Delta^{1}=[-1,1]\times [-1,1]$ and ${\bf L}^{\lambda}=\{1-x, 1+x, {\lambda}(1-y), {\lambda}(1+y)\}$, a normalization we are going to use. As $f$ is a $\T$-invariant Killing potential for $(g, \omega_\lambda)$, $f(x,y)= f_0 + f_1x + f_2 y$. It follows from Theorem~\ref{toric-futaki} that the Futaki invariant $\mathfrak{F}_{\Delta, {\bf L}^{\lambda}, f}=0$. In order to compute the Futaki invariant as a function of the coefficients $(f_1, f_2)$ of $f$, we are going to use formulae \eqref{c0} and \eqref{F} with respect to the $\omega_\lambda$-compatible toric product metric
 $g_{\lambda}=g_1\oplus g_2^{\lambda}$, where
$$g_1=\frac {dx^2}{A(x)}+A(x)dt_1^2, \quad g_2^{\lambda}=\frac {dy^2}{B(y)}+B(y)dt_2^2$$
with  $A(x)=1-x^2,\quad B(y)=\frac{1}{\lambda}(1-y^2).$
It is easily seen that $g_{\lambda}$ belongs  to $\mathcal{C}^{\T}_{\omega_\lambda}$. Note that the positive constant  $\mu:=\frac{1}{\lambda}$ is the Gauss curvature of $g_2$.

The scalar curvature of  $\tilde g_\lambda=g_\lambda/f^2$ is  (see \eqref{sg} and \eqref{solution-parallelogram})
\begin{equation*}
\begin{split}
s_{\tilde g_\lambda} = & -f^5\Big[\Big(\frac{A(x)}{f^3}\Big)_{xx}+\Big(\frac{B(y)}{f^3}\Big)_{yy}\Big]\\
                  = &(2(\mu+1){f_{1}}^{2} {x}^{2}+ (2(\mu+1){f_{2}}^{2}){y}^{2}  -(8(\mu+1)f_{1}f_{2}) xy\\
                     & +(4(\mu-2)f_{0}f_{1})x+ (4(\mu-2)f_{0}f_{2}) y+2(\mu+1)f_{0}^{2}\\
                     & -12f_{1}^{2}-12\mu f_{2}^{2}.
                     \end{split}
                     \end{equation*}
We thus have
\begin{equation*}
\begin{split}
c_{\omega_\lambda,f} &=\int_M (s_{\tilde g_\lambda}/f^5)v_{\omega_\lambda}\left/ \int_M (1/f^5)v_{\omega_\lambda}\right. \\
 &=\int_{-1}^1\int_{-1}^1 (s_{\tilde g_\lambda}/f(x,y)^5)dx dy \left/ \int_{-1}^1\int_{-1}^1 (1/f(x,y)^5) dx dy\right..
\end{split}
\end{equation*}
Feeding the explicit expressions to Maple software and performing the iterated integrals, we get
\begin{multline*}
c_{\omega_\lambda,f}=
6\,  \left( 2\,{f_{{0}}}^{2}{f_{{1}}}^{2}-3\,{f_{{1}}}^{4}+2\,{
f_{{1}}}^{2}{f_{{2}}}^{2}+2\mu {f_{{2}}}^{2}{f_{{0}}}^{2}-3\mu{f_{{2}}}
^{4}+\mu{f_{{0}}}^{4}\right.\\
\left.-2\,{f_{{0}}}^{2}{f_{{2}}}^{2}+{f_{{2}}}^{4}+{f_{{0
}}}^{4}+2\mu{f_{{2}}}^{2}{f_{{1}}}^{2}-2\mu{f_{{0}}}^{2}{f_{{1}}}^{2}+
\mu{f_{{1}}}^{4} \right)  \left( f_{{0}}-f_{{1}}-f_{{2}} \right)\\
 \left( f_{{0}}-f_{{2}}+f_{{1}} \right)  \left( f_{{0}}-f_{{1}}+f_{{2}
} \right)  \left( f_{{0}}+f_{{2}}+f_{{1}} \right)
\left/\left(-3\,{f_{{0}}}^{4}{
f_{{1}}}^{2}+3\,{f_{{1}}}^{6}-3\,{f_{{2}}}^{4}{f_{{1}}}^{2}\right.\right.\\
\left.\left.+14\,{f_{{0
}}}^{2}{f_{{2}}}^{2}{f_{{1}}}^{2}-3\,{f_{{0}}}^{2}{f_{{1}}}^{4}-3\,{f_
{{2}}}^{2}{f_{{1}}}^{4}+3\,{f_{{0}}}^{6}-3\,{f_{{0}}}^{4}{f_{{2}}}^{2}
-3\,{f_{{2}}}^{4}{f_{{0}}}^{2}+3\,{f_{{2}}}^{6}\right)\right.
\end{multline*}
We stress that the only explicit assumptions given to the software in
the calculation of these integrals are to prevent it from interpreting
them as improper.

Similarly, we calculate the Futaki invariant evaluated on the affine functions  $h=x$ and $h=y$,
using
$$
\mathfrak{F}_{\Delta, {\bf L}^{\lambda}, f} (h)=
\frac{1}{(2\pi)^2}\int_M \Big( \frac{s_{\tilde g_\lambda}-c_{\omega_\lambda, f}}{f^5}\Big)  h\ v_{\omega_\lambda}
=\int_{-1}^1\int_{-1}^1  \Big( \frac{s_{\tilde g_\lambda}-c_{\omega_\lambda,f}}{f^5}\Big)\,h\,dx\,dy$$
The answers given by the software are
\begin{multline*}
\mathfrak{F}_{\Delta, {\bf L}^{\lambda}, f} (x)=
16\, f_{{1}} \Big( 2\mu{f_{{0}}}^{4}{f_{{2}}}^{2}+5\mu{f_{{0}}}
^{2}{f_{{2}}}^{4}-2\,{f_{{0}}}^{2}{f_{{1}}}^{4}+{f_{{2}}}^{6}-\mu{f_{{2
}}}^{4}{f_{{1}}}^{2}\\
-2\,{f_{{0}}}^{2}{f_{{2}}}^{2}{f_{{1}}}^{2}-4\mu{f_
{{0}}}^{2}{f_{{2}}}^{2}{f_{{1}}}^{2}-3\mu{f_{{0}}}^{4}{f_{{1}}}^{2}+2
\mu{f_{{2}}}^{2}{f_{{1}}}^{4}-\mu{f_{{1}}}^{6}-2\,{f_{{0}}}^{6}+{f_{{0}}
}^{4}{f_{{2}}}^{2}\\
\left.+\mu{f_{{0}}}^{6}+{f_{{2}}}^{2}{f_{{1}}}^{4}-2\,{f_{{2
}}}^{4}{f_{{1}}}^{2}+4\,{f_{{0}}}^{4}{f_{{1}}}^{2}+3\mu{f_{{0}}}^{2}{f
_{{1}}}^{4} \Big)
\left/\Big( \left( f_{{0}}-f_{{2}}+f_{{1}} \right)\right.\right.\\
\left(
f_{{0}}-f_{{1}}-f_{{2}} \right)  \left( -3\,{f_{{0}}}^{4}{f_{{1}}}^{2}
+3\,{f_{{1}}}^{6}-3\,{f_{{2}}}^{4}{f_{{1}}}^{2}+14\,{f_{{0}}}^{2}{f_{{
2}}}^{2}{f_{{1}}}^{2}-3\,{f_{{0}}}^{2}{f_{{1}}}^{4}\right.\\
\left.-3\,{f_{{2}}}^{2}{f
_{{1}}}^{4}+3\,{f_{{0}}}^{6}-3\,{f_{{0}}}^{4}{f_{{2}}}^{2}-3\,{f_{{2}}
}^{4}{f_{{0}}}^{2}+3\,{f_{{2}}}^{6} \right)  \left( f_{{0}}+f_{{2}}+f_
{{1}} \right)  \left( f_{{0}}-f_{{1}}+f_{{2}} \right) \Big),
\end{multline*}
and
\begin{multline*}
\mathfrak{F}_{\Delta, {\bf L}^{\lambda}, f} (y)=
-16\, f_{{2}} \Big(-\mu{f_{{1}}}^{6}+2\mu{f_{{0}}}^{2}{f_{{2}}}^
{2}{f_{{1}}}^{2}-\mu{f_{{0}}}^{4}{f_{{1}}}^{2}+{f_{{2}}}^{6}+{f_{{2}}}^
{2}{f_{{1}}}^{4}\\
-2\,{f_{{2}}}^{4}{f_{{1}}}^{2}+3\,{f_{{0}}}^{4}{f_{{2}
}}^{2}-3\,{f_{{2}}}^{4}{f_{{0}}}^{2}-{f_{{0}}}^{6}+2\mu{f_{{0}}}^{2}{f
_{{2}}}^{4}+2\mu{f_{{0}}}^{6}+4\,{f_{{0}}}^{2}{f_{{2}}}^{2}{f_{{1}}}^{
2}\\
\left.-5\,{f_{{0}}}^{2}{f_{{1}}}^{4}-2\,{f_{{0}}}^{4}{f_{{1}}}^{2}+2\mu{f_
{{2}}}^{2}{f_{{1}}}^{4}-\mu{f_{{2}}}^{4}{f_{{1}}}^{2}-4\mu{f_{{0}}}^{4}{
f_{{2}}}^{2} \Big)
\left/ \Big( \left( f_{{0}}-f_{{2}}+f_{{1}} \right)\right.\right.\\
 \left( f_{{0}}-f_{{1}}-f_{{2}} \right)  \left( -3\,{f_{{0}}}^{4}{f_{{
1}}}^{2}+3\,{f_{{1}}}^{6}-3\,{f_{{2}}}^{4}{f_{{1}}}^{2}+14\,{f_{{0}}}^
{2}{f_{{2}}}^{2}{f_{{1}}}^{2}-3\,{f_{{0}}}^{2}{f_{{1}}}^{4}\right.\\
\left.-3\,{f_{{2}
}}^{2}{f_{{1}}}^{4}+3\,{f_{{0}}}^{6}-3\,{f_{{0}}}^{4}{f_{{2}}}^{2}-3\,
{f_{{2}}}^{4}{f_{{0}}}^{2}+3\,{f_{{2}}}^{6} \right)  \left( f_{{0}}+f_
{{2}}+f_{{1}} \right)  \left( f_{{0}}-f_{{1}}+f_{{2}} \right)\Big).
\end{multline*}
We consider first the case where $f_0$, $f_1$, $f_2$ are all nonzero.
In this case $\mathfrak{F}_{\Delta, {\bf L}^{\lambda}, f}$ vanishes iff
the two homogeneous polynomials in the $f_i$'s appearing in brackets in
the numerators of the last two expressions vanish. Now both
these polynomials are affine in $\mu$, so  isolating $\mu$ from each, the
resulting rational functions must equal each other, and we check whether
this can be the case. Factoring the difference of these two rational
functions, using the software, gives another rational function whose
numerator contains the factors $f_0^2$, the four linear terms
$(f_0\pm f_1\pm f_2)$ and
\begin{multline}\label{six}
-3\,{f_{{0}}}^{4}{f_{{1}}}^{2}+3\,{f_{{1}}}^{6}-3\,{f_{{2}}}^{4}{f_{{1
}}}^{2}+14\,{f_{{0}}}^{2}{f_{{2}}}^{2}{f_{{1}}}^{2}-3\,{f_{{0}}}^{2}{f
_{{1}}}^{4}-3\,{f_{{2}}}^{2}{f_{{1}}}^{4}\\
+3\,{f_{{0}}}^{6}-3\,{f_{{0}}}^{4}{f_{{2}}}^{2}
-3\,{f_{{2}}}^{4}{f_{{0}}}^{2}+3\,{f_{{2}}}^{6}.
\end{multline}
The positivity of $f$ over the vertices of $\Delta$ implies $f_0>0$ and $(f_0\pm f_1\pm f_2)>0$; up to a homothety of $\tilde g_{\lambda}$,  we can also assume $f_0=1$, and therefore $|f_1|<1$ and $|f_2|<1$ again by using the positivity of $f$ over (the vertices of) $\Delta$. Thus,  we obtain that \eqref{six}  must vanish for $f_0=1$ and
$f_1^2$, $f_2^2$  in $(0,1)$. The expression \eqref{six} (with $f_0=1$) becomes a cubic polynomial in $\xi=f_1^2$ and $\eta=f_2^2$:
$$Q(\xi, \eta):=3\eta^3-3(\xi+1)\eta^2-(3\xi^2-14\xi+3)\eta+3(\xi+1)(\xi-1)^2.$$
Now $Q(\xi, 0)>0$ for any $\xi \in (0,1)$, so that the polynomial $P_{\xi}(\eta)=Q(\xi, \eta)$ has one negative real root $\eta$
for any such $\xi$. On the other hand, the discriminant of $P_{\xi}(\eta)$ is
$$-768\xi\Big(9\xi^4-30\xi^3+47\xi^2-30\xi+9\Big),$$ which is negative for $\xi$
in $(0,1)$, as the expression in brackets has a positive minimum in $[0,1]$ at $\xi \approx 0.533$. Thus $P_{\xi}$ has no zeros in $(0,1)$ for any $\xi$ in $(0,1)$, so that
\eqref{six} cannot vanish for $f_1^2$, $f_2^2$ in $(0,1)$.

This shows that either $f_1$ or $f_2$ must be zero, should $\mathfrak{F}_{\Delta, {\bf L}^{\lambda}, f} =0$ for some $f$. Suppose for instance $f_1=0$ (the case $f_2=0$ is similar). Note that as we assumed $\tilde g$ is not cscK, we have in this case $f_2 \neq 0$, and we still normalize $f$ such that $f_0=1$. Thus,  $f$ is positive over $\Delta$ iff $f_2^2 \in (0,1)$.  By the above general expressions for the Futaki invariant,  we have $\mathfrak{F}_{\Delta, {\bf L}^{\lambda}, f}(x)=0$ whereas $\mathfrak{F}_{\Delta, {\bf L}^{\lambda}, f}(y)=0$ reduces to
\begin{equation*}
(f_2^2-1)^2(f_2^2 -1 + 2\mu)=0,
\end{equation*}
i.e. the Futaki invariant $\mathfrak{F}_{\Delta, {\bf L}^{\lambda}, f}$ vanishes if and only if
$$1-2\mu = 1- \frac{2}{\lambda} =f_2^2,$$
showing that $\lambda \in (2, \infty)$  is a necessary and sufficient condition for the existence of such $f$. Furthermore, if
such $f$ exists there are, up to scale, precisely two solutions  of $\mathfrak{F}_{\Delta, {\bf L}^{\lambda}, f}=0$, given by $f^{\lambda}_{\pm}(x,y)= \pm \sqrt{\Big(1- \frac{2}{\lambda}\Big)} y + 1$.

The case $f_2=0$ is treated similarly: one then gets that $\mathfrak{F}_{\Delta, {\bf L}^{\lambda}, f}=0$  if and only if
$$1-2\lambda=f_1^2,$$
showing that $\lambda \in (0, \frac{1}{2})$,  and that in this case  up to scale there are precisely two positive affine functions $f$ with $\mathfrak{F}_{\Delta, {\bf L}^{\lambda}, f}=0$,  given by $f_{\pm}^\lambda(x,y)= \pm \sqrt{(1-2\lambda)} x + 1$.

The conclusion of the above analysis is that $\mathfrak{F}_{\Delta, {\bf L}^{\lambda}, f}$ vanishes for some $f$ if and only if $\lambda \in (0, \frac{1}{2}) \cup (2, \infty)$ and for each such $\lambda$, up to a scaling factor,  there are precisely two values $f=f^{\lambda}_{\pm}$ for which $\mathfrak{F}_{\Delta, {\bf L}^{\lambda}, f}$=0. Furthermore, as the affine line $f=0$ for those values is parallel to a pair of parallel facets of $\Delta$, Remark~\ref{extension} (which relies on the proof of Theorem~\ref{ambitoric-EM-classification} (see Case 3)) and Proposition~\ref{ambitoric-compactification} imply that there
exists an ($\omega_\lambda$-compatible) product ambitoric compactification $g_f$ such that $(\frac{1}{f^2})g_f$ is Einstein-Maxwell, as
was explicitly shown in \cite{LeB0}. The uniqueness Theorem~\ref{uniqueness} then shows that $g$ must be $g_f$, up to a $\T$-equivariant isometry.
\end{proof}

\begin{rem}\label{r:product-non-uniqueness} We will now show that the two solutions of \eqref{modified-abreu} associated to the affine functions $f^{\lambda}_{\pm}(x, y)$  are in fact conformal. Let us consider the case when $\lambda >2$, i.e. $f^{\lambda}_{\pm}= \pm \sqrt{(1-\frac{2}{\lambda})} x + 1$ (the case $0<\lambda< \frac{1}{2}$ can be treated similarly).

In general terms, let $(\Delta, {\bf L}) \subset \tor^*$ be a labelled polytope associated to a toric symplectic manifold $(M,\omega)$ while $f>0$ is a positive affine function and $u \in S(\Delta, {\bf L})$ is a solution of \eqref{modified-abreu} defining a compatible K\"ahler metric $g^u$ on $M$ with $(1/f^2) g^u$ Einstein--Maxwell. Let $F$ be an affine transformation of $\tor^*$ preserving $(\Delta, {\bf L})$. Then it follows from  the theory developed in Section~\ref{s:toric} that the pull-back $\bar u :=F^* u$ is a symplectic potential in $\mathcal{S}(\Delta, {\bf L})$, which in turn is a solution of \eqref{modified-abreu} with respect to the positive affine function $\bar f :=F^* f$. In particular, $\bar u$ defines another
K\"ahler metric $g^{\bar u}$ compatible with $\omega$, such that $(1/{\bar f}^2) g^{\bar u}$ is also Einstein--Maxwell.

We now apply this in our case, with $F(x, y)= (-x, y)$ which clearly preserves the labelled square $(\Delta, {\bf L}^{\lambda})$ and sends $f^{\lambda}_+(x, y)$ to $f^{\lambda}_-(x,y)$. Let $u_+(x,y)$ be a symplectic potential corresponding to the solution of \eqref{modified-abreu} with respect to $f^{\lambda}_+(x,y)$ and $u_{-}(x,y):= u_{+}(-x,y)$ the solution of \eqref{modified-abreu} corresponding to $f^{\lambda}_-(x,y)$. As we have already noticed, the proof of Theorem~\ref{ambitoric-EM-classification} implies that the corresponding K\"ahler metric $g^{u_+}$ must be ambitoric of product type, i.e. is written as
$$g^{u_+} = \frac{dx^2}{A(x)} + A(x) dt_1^2  + \frac{dy^2}{B(y)} + B(y) dt_2^2.$$
By \cite{LeB0},  or  by the discussion in Sections~\ref{prod} and \ref{s:compactification}, $B(y)= \frac{1}{\lambda}(1- y^2)$ (as $B(\pm 1)=0, B'(\pm 1) = \mp \frac{2}{\lambda}$), whereas $A(x)$ is a degree $4$ polynomial satisfying the boundary conditions $A(\pm 1)=0, A'(\pm 1) = \mp 2$. The general form of such a polynomial is $$A(x) = (1- x^2) + c(1-x^2)^2, $$
where $c$ is a real constant.
It follows that $A(-x)=A(x)$, showing ${\rm Hess}(u_+) = {\rm Hess}(u_-)$, i.e. $g^{u_+} = g^{u_-}$. Let us denote this $\omega_{\lambda}$-compatible product K\"ahler metric by $g$. Thus, the two conformal Einstein--Maxwell metrics $\frac{1}{(f_{\pm}^{\lambda})^2} g$ are conformal to the same K\"ahler metric $g$.

In fact, the involution $F$ of $\tor ^*$ comes from the antipodal map (still denoted by $F$) on $M= S^2 \times S^2$ over the first factor, which sends the momentum variable $x$ to $-x$ and leaves $t_1$ unchanged. It follows from the above discussion that $F$ is an isometry  of $g$ which interchanges the two riemannian metrics $\frac{1}{(f_{\pm}^{\lambda})^2} g$. Notice, however,  that $F$ is not a biholomorphism with respect to the complex structure of $(g, \omega_{\lambda})$. \end{rem}

\end{document}